\documentclass[12pt,openany]{amsart}
\counterwithin{equation}{section}

\allowdisplaybreaks

\usepackage{bbm}
\usepackage{amssymb}
\usepackage{amsmath}
\usepackage[all]{xy}
\usepackage{amsfonts}
\usepackage{mathrsfs}
\usepackage{latexsym}
\usepackage{tikz}
\usepackage{tikz-cd}
\tikzcdset{scale cd/.style={every label/.append style={scale=#1},
    cells={nodes={scale=#1}}}}

\newcommand{\diagramglobalscale}{1}

\tikzset{
  every picture/.append style={
    scale=\diagramglobalscale,
    transform shape
  }
}
    
\usetikzlibrary{patterns}
\usepackage{graphicx}
\usepackage{enumerate}
\usepackage{amscd,amssymb,amsmath,amsbsy,amsthm}
\definecolor{linkred}{rgb}{0.7,0.2,0.2}
	\definecolor{linkblue}{rgb}{0,0.2,0.6}
	\definecolor{linkgreen}{rgb}{0,0.6,0.2}
\usepackage[colorlinks,plainpages,
    linkcolor=linkblue,
    citecolor=linkgreen,
    urlcolor=linkred]{hyperref}
\usepackage{zref-clever}

\zcsetup{
  cap,                      
  nameinlink=false,         
  lastsep = {, and }        
}
\zcRefTypeSetup{assumption}{
    Name-sg = Assumption ,
    name-sg = assumption ,
    Name-pl = Assumptions ,
    name-pl = assumptions ,
}
\zcRefTypeSetup{corollary}{
    Name-sg = Corollary ,
    name-sg = corollary ,
    Name-pl = Corollaries ,
    name-pl = corollaries ,
}
\zcRefTypeSetup{definition}{
    Name-sg = Definition ,
    name-sg = definition ,
    Name-pl = Definitions ,
    name-pl = definitions ,
}
\zcRefTypeSetup{enumi}{
    Name-sg = {} ,
    name-sg = {} ,
    Name-pl = {} ,
    name-pl = {} ,
}
\zcRefTypeSetup{equation}{
    Name-sg = {} ,
    name-sg = {} ,
    Name-pl = {} ,
    name-pl = {} ,
}
\zcRefTypeSetup{example}{
    Name-sg = Example ,
    name-sg = example ,
    Name-pl = Examples ,
    name-pl = examples ,
}
\zcRefTypeSetup{lemma}{
    Name-sg = Lemma ,
    name-sg = lemma ,
    Name-pl = Lemmas ,
    name-pl = lemmas ,
}
\zcRefTypeSetup{prop}{
    Name-sg = Proposition ,
    name-sg = proposition ,
    Name-pl = Propositions ,
    name-pl = propositions ,
}
\zcRefTypeSetup{remark}{
    Name-sg = Remark ,
    name-sg = remark ,
    Name-pl = Remarks ,
    name-pl = remarks ,
}
\zcRefTypeSetup{theorem}{
    Name-sg = Theorem ,
    name-sg = theorem ,
    Name-pl = Theorems ,
    name-pl = theorems ,
}

\zcsetup{
  countertype = {
    enumi=item, enumii=item, enumiii=item, enumiv=item
  },
  counterresetby = {
    enumii=enumi, enumiii=enumii, enumiv=enumiii
  }
}
\zcRefTypeSetup{item}{
    Name-sg = {} ,
    name-sg = {} ,
    Name-pl = {} ,
    name-pl = {} ,
}

\AddToHook{env/assumption/begin}{\zcsetup{countertype={theorem=assumption}}}
\AddToHook{env/corollary/begin}{\zcsetup{countertype={theorem=corollary}}}
\AddToHook{env/definition/begin}{\zcsetup{countertype={theorem=definition}}}
\AddToHook{env/example/begin}{\zcsetup{countertype={theorem=example}}}
\AddToHook{env/lemma/begin}{\zcsetup{countertype={theorem=lemma}}}
\AddToHook{env/prop/begin}{\zcsetup{countertype={theorem=proposition}}}
\AddToHook{env/remark/begin}{\zcsetup{countertype={theorem=remark}}}

\usepackage{dsfont}

\newcommand{\cref}[1]{\zcref{#1}}
\newcommand{\Cref}[1]{\zcref[S]{#1}}

\newtheorem{theorem}{Theorem}[section]
\newtheorem{definition}[theorem]{Definition}
\newtheorem{lemma}[theorem]{Lemma}
\newtheorem{corollary}[theorem]{Corollary}
\newtheorem{prop}[theorem]{Proposition}

\newtheorem{example}[theorem]{Example}
\newtheorem{remark}[theorem]{Remark}

\newcommand{\red}[1]{{\color{red}#1}}

\def\Hom{\operatorname{Hom}}

\newcommand\C{\mathbb{C}}

\newcommand\N{\mathbb{N}}
\newcommand\Q{\mathbb{Q}}
\newcommand\R{\mathbb{R}}
\newcommand\Z{\mathbb{Z}}

\newcommand\kk{\Bbbk}
\newcommand\one{\mathds{1}}

\newcommand\ba{\mathbf{a}}
\newcommand\bA{\mathbf{A}}
\newcommand\bb{\mathbf{b}}

\newcommand\bi{\mathbf{i}}
\newcommand\bj{\mathbf{j}}

\newcommand\bT{\mathbf{T}}

\newcommand\bS{\mathbf{S}}

\newcommand{\bnu}{\mathbf{\nu}}

\newcommand{\fywtmod}{\ftrust\text{-wtmod}}

\newcommand\cC{\mathcal{C}}
\newcommand\cD{\mathcal{D}}

\newcommand\cF{\mathcal{F}}

\newcommand{\cR}{\mathcal{R}}
\newcommand\cS{\mathcal{S}}

\newcommand\cM{\mathcal{M}}
\newcommand\cN{\mathcal{N}}
\newcommand\cP{\mathcal{P}}

\newcommand\op{\textup{op}}
\newcommand\rev{\textup{rev}}

\newcommand{\obj}[1]{\operatorname{obj}(#1)}

\newcommand{\height}{\text{ht}}

\DeclareFontFamily{OT1}{pzc}{}
\DeclareFontShape{OT1}{pzc}{m}{it}{<-> s * [1.10] pzcmi7t}{}
\DeclareMathAlphabet{\mathpzc}{OT1}{pzc}{m}{it}

\newcommand{\Word}{\langle Q_0\rangle}
\newcommand{\Qui}{\mathcal{Q}}

\newcommand{\cRB}{{}^\tau_\mu\cR}
\newcommand{\objRB}[1]{{}^\tau\Word_{#1}}
\newcommand{\talpha}{{}^\tau{\alpha}}

\newcommand{\klrw}{{\red\cR}}
\newcommand{\intklrw}{{{}^\tau_{\mathbf{0}}\red\cR^{\lamseq}_{\text{int}}}}
\newcommand{\lamseq}{{\red{\underline{\lambda}}}}
\newcommand{\rlam}{\red{\lambda}}
\newcommand{\rmu}{\red{\mu}}
\newcommand{\dom}{P^{\text{dom}}}

\newcommand{\oklrwmu}{{}^\tau_\mu{\klrw}}
\newcommand{\klrwalg}{\klrw^{\lamseq}}
\newcommand{\oklrwalg}{\oklrwmu^{\lamseq}}
\newcommand{\ctP}{\tilde{\mathcal{P}}}
\newcommand{\bzero}{{\mathbf{0}}}
\newcommand{\oklrwzero}{{}^\tau_{\bzero}{\klrw}}

\newcommand{\trust}{{}^\tau Y_\mu^\lambda}

\newcommand{\Pol}{\mathsf{Pol}}
\newcommand{\redkappa}{\red{\kappa}}
\newcommand{\Polint}{{\Pol^{\lamseq}_{\Omega}}}

\newcommand\cEnd{\mathpzc{End}}     
\newcommand\cVect{\mathpzc{Vect}}     

\newcommand\bH{\mathbf{H}}
\newcommand\bR{\mathbf{R}}

\newcommand{\word}{\langle Q_0 \rangle}

\DeclareMathOperator{\End}{End}
\DeclareMathOperator{\ev}{ev}

\DeclareMathOperator{\id}{id}

\DeclareMathOperator{\sgn}{sgn}

\DeclareFontFamily{U}{wncy}{}
\DeclareFontShape{U}{wncy}{m}{n}{<->wncyi10}{}
\DeclareSymbolFont{mcy}{U}{wncy}{m}{n}
\DeclareMathSymbol{\Sha}{\mathord}{mcy}{"58} 
\DeclareMathSymbol{\Ize}{\mathord}{mcy}{"49} 
\DeclareMathSymbol{\Ya}{\mathord}{mcy}{"17} 
\newcommand{\tR}{\Ize}

\newcommand{\intmodnil}{\intklrw\text{-mod}_{\text{nil}}}

\usetikzlibrary{arrows.meta,patterns}
\usetikzlibrary{decorations.markings,decorations.pathreplacing}
\usepackage{tikz-cd}

\tikzset{multi/.style={very thick}}
\tikzset{
    anchorbase/.style={
        >=To,
        line cap = round, line join = round,
        baseline={([yshift=-0.5ex]current bounding box.center)},
    }
}

\tikzset{wei/.style={draw=red,double=red!40!white,double distance=1pt,thick}}

\newcommand{\ftrust}{{}^\tau F Y_\mu^\lambda}
\newcommand{\abtrust}{{}^\tau F Y^{\text{ab}}}

\newcommand{\strandlabel}[1]{$\scriptstyle{#1}$}
\newcommand{\botlabel}[1]{node[anchor=north] {\strandlabel{#1}}}
\newcommand{\toplabel}[1]{node[anchor=south] {\strandlabel{#1}}}
\newcommand{\rbotlabel}[1]{node[anchor=north,xscale=-1] {\strandlabel{#1}}}
\newcommand{\rtoplabel}[1]{node[anchor=south,xscale=-1] {\strandlabel{#1}}}
\newcommand{\braidup}{to[out=up,in=down]}

\newcommand{\coupon}[2]{ 
    \draw (#1) node[inner sep=2pt,draw,rounded corners,fill=black!20!white] {$\scriptstyle{#2}$}
}

\newcommand{\bdot}[1]{
   \filldraw[fill=black,draw=black] (#1) circle (1pt);
}

\definecolor{mirrorcolor}{RGB}{195,230,235}
\newcommand{\mirror}[3]{%
\ifdim#3pc<#2pc%
    \fill[mirrorcolor] (#1,#2) rectangle ({#1-0.04},#3);
    \fill[gray!60] (#1,#2) rectangle ({#1+0.04},#3);
    \fill[pattern=north east lines, pattern color=gray!60]
      ({#1+0.00},{#2+0.01}) rectangle ({#1+0.09},#3);
\else%
    \fill[mirrorcolor] (#1,#2) rectangle ({#1+0.04},#3);
    \fill[gray!60] (#1,#2) rectangle ({#1-0.04},#3);
    \fill[pattern=north east lines, pattern color=gray!60] ({#1-0.0},{#2-0.00}) 
      rectangle ({#1-0.1},{#3+0.00});    
\fi}

\newcommand{\rmirror}[3]{%
\ifdim#3pc<#2pc%
    \fill[mirrorcolor] (#1,#2) rectangle ({#1+0.04},#3);
    \fill[gray!60]    (#1,#2) rectangle ({#1-0.04},#3);
    \fill[pattern=north east lines, pattern color=gray!60]
      ({#1-0.09},{#2+0.01}) rectangle ({#1-0.00},#3);
\else%
    \fill[mirrorcolor] (#1,#2) rectangle ({#1-0.04},#3);
    \fill[gray!60]    (#1,#2) rectangle ({#1+0.04},#3);
    \fill[pattern=north east lines, pattern color=gray!60]
      ({#1+0.00},{#2-0.00}) rectangle ({#1+0.10},{#3+0.00});
\fi}

\title{On weight modules over truncated shifted iYangians}
\author{Yaolong Shen, Linliang Song and Rui Xiong}

\begin{document}

\begin{abstract}
Truncated shifted iYangians are a family of algebras expected to
quantize certain components of affine Grassmannian islices.
We introduce orientifold KLRW (oKLRW) algebras associated with quivers
with involution and establish their faithful polynomial representations
and diagrammatic bases. We also define KLR iYangians using double reflective KLR diagrams and construct diagrammatic realizations of the iGKLO homomorphisms. For integral parameters, we introduce interval oKLRW algebras and prove an equivalence between integral weight modules over truncated shifted iYangians and nilpotent modules over the corresponding interval oKLRW algebras.
\end{abstract}

\maketitle

\tableofcontents 

\section{Introduction}
\subsection{Background}
Quiver Hecke algebras, also known as KLR algebras, provide a powerful diagrammatic framework for the categorification of quantum groups \cite{KL09,Rou}.  Webster \cite{Web17a} later introduced KLRW algebras, which categorify tensor products of integrable highest-weight modules.  Diagrammatically, KLRW algebras are obtained by adding red strands to the usual black-strand KLR diagrams; these red strands are labeled by dominant weights and record the highest-weight data of the tensor product factors.

On the other hand, Braverman, Finkelberg, and Nakajima constructed Coulomb branches of three-dimensional $\mathcal N=4$ gauge theories \cite{BFN18}. For framed quiver gauge theories of simply-laced type, these Coulomb branches are closely related to slices in the affine Grassmannian, and their quantizations admit explicit realizations by difference operators. In this setting, Kamnitzer--Webster--Weekes--Yacobi \cite{KWWY14} introduced truncated
shifted Yangians, using the difference-operator representations originating in the work of Gerasimov--Kharchev--Lebedev--Oblezin \cite{GKLO05}.
These algebras provide quantizations of affine Grassmannian slices.

Kamnitzer--Tingley--Webster--Weekes--Yacobi \cite{KTWWY19} subsequently established a relationship between Coulomb branch algebras and Webster's diagrammatic tensor product categorifications. More precisely, they studied category $\mathcal O$ for truncated shifted Yangians and showed that, for generic integral parameters, this category is equivalent to a weight space in Webster's categorification of a tensor product of fundamental representations using KLRW algebras. 

The argument in \cite{KTWWY19} proceeds through a sequence of intermediate diagrammatic and Yangian-type algebras. On the diagrammatic side, they introduced the parity KLRW algebra, together with metric and coarse metric variants in which the strands carry additional integral ``longitude'' data determined by the parameters.  On the Yangian side, they replace the truncated shifted Yangian by a Morita equivalent flag version and relate it to the KLR Yangian, which is a diagrammatic algebra drawn on a cylinder. The desired equivalence of categories is then obtained by passing through these intermediate algebras; see \cite[Sect. 1.7]{KTWWY19} for details.

A further development of this picture was given in \cite{KWWY24}, where
parabolic induction and restriction functors for Coulomb branch algebras were
used to construct categorical Kac--Moody actions on categories of modules over
truncated shifted Yangians. Under the equivalence with flavoured KLRW algebras,
these functors correspond to the usual categorical action on KLRW module
categories.

More recently, a new class of gauge-theoretic constructions arising from quivers with involution has attracted much interest.
In these constructions, Yangians are replaced by twisted Yangians (also known as iYangians); see \cite{LZ24,LWZ25} for Drinfeld-type presentations.
Shifted iYangians associated to quasi-split Satake diagrams of ADE type are developed in \cite{LWW}, where PBW bases and a uniform family of iGKLO representations are constructed.
The images of these representations are referred to as truncated shifted iYangians.
In type AI, certain truncated shifted iYangians are further identified in \cite{LWW2} with the truncated shifted iYangians introduced in \cite{LPTTW25}, which are isomorphic to finite $W$-algebras of classical types.

In \cite{SSX25}, the authors study the Coulomb branch of the
three-dimensional $\mathcal N=4$ involution-fixed part of a quiver gauge
theory and construct an iGKLO-type homomorphism from the corresponding
shifted iYangian to the quantized Coulomb branch algebra. A related
construction in the second symmetric-power case is given in \cite{Wang25}.
It is expected that these constructions admit K-theoretic analogues,
related to the shifted affine iquantum groups developed in
\cite{LP26,LWW26}.

The main goal of this paper is to establish an orientifold analogue of the equivalence constructed in \cite{KTWWY19}. More precisely, we relate integral weight modules over
truncated shifted iYangians to nilpotent modules over interval
oKLRW algebras. The comparison proceeds through a flag version of the
truncated shifted iYangian and a diagrammatic algebra, namely the
KLR iYangian.

\subsection{Main results}
\label{sec:qwi}
Assume throughout the paper that there is an involution $\tau$ on $\Qui=(Q_0,Q_1,s,t)$ satisfying
\begin{itemize}
    \item 
    $s(\tau(h))=\tau(t(h))$ and $ t(\tau(h))=\tau(s(h))$ for all $h \in Q_1$; 
    \item $\tau(s(h))=t(h)$ if and only if $\tau(h)=h$. 
\end{itemize}
We remark that the pair $(\Qui,\tau)$ is called a \emph{quiver with involution} or an \emph{iquiver} in the literature; see \cite{EK07} and \cite{VV11}. Through the rest of the paper, we assume that $Q_0$ has no fixed point under $\tau$. Note that the same assumption is also made in \cite{En09,VV11,SSX25}. We let $Q_1^{\tau}$ denote the set of arrows which are $\tau$-invariant.

The diagrammatic starting point for our construction is the orientifold-KLR (oKLR) algebra. In fact, the oKLR algebra is a variant of the KLR algebra in which the usual black-strand diagrams are allowed to interact with a reflecting boundary, which we draw as a mirror. When a black strand meets the mirror, its label is changed from $i$ to \(\tau i\). Such algebras arose from the study of the affine Hecke algebra of type B by Varagnolo--Vasserot \cite{VV11}; see also \cite{Prz23}.

Our first step is to introduce orientifold KLRW (oKLRW) algebras, defined in \cref{eq:oklrwalg}. These algebras combine the mirror formalism of oKLR algebras with Webster's red-strand diagrammatics. The mirror encodes the involution on the quiver, while the red strands encode the dominant-weight data. We construct polynomial representations for these algebras in
\cref{prop:polyrep}, prove a diagrammatic basis theorem in
\cref{oklrwbasis}, and deduce the faithfulness of the polynomial
representations in \cref{cor:faithfulpolyrep}. The construction is compatible with both the oKLR and KLRW polynomial representations, and fits into the following diagram:
\[
\begin{tikzcd}[row sep=tiny]
&\text{KLR algebra}  \arrow[rd,"\text{+ red strands}"] \arrow[ld,"\text{+ mirror}"']& \\
\text{oKLR algebra} \arrow[rd,"\text{+ red strands}"'] & & \text{KLRW algebra}  \arrow[ld,"\text{+ mirror}"] \\
& \text{oKLRW algebra} &
\end{tikzcd}
\]
We also show that, in the diagonal case where the quiver is the disjoint union of two copies interchanged by the involution, the oKLRW algebra is Morita equivalent to the ordinary KLRW algebra for one copy; see \cref{thm:diagonal}.

On the Yangian side, we introduce KLR iYangians. They are diagrammatic algebras, defined in \cref{def:Ya} using the
double reflective KLR diagrams of \cref{def:twomirrordiagram}, in which
black strands move between two facing mirrors.
Here is an example of a double reflective KLR diagram:
$$
\begin{tikzpicture}
        [anchorbase,scale=1.2]
\mirror{0}{0}{2};
\mirror{5}{2}{0};
\draw (0.5,0) \braidup (1.5,2);
\draw (1.0,0) \braidup (2.5,2);
\draw (1.5,0) \braidup (2,0.75) \braidup (0.5,2);
\draw (2,0) to [out=50,in=-180+35] (5,1) to [out=180-35, in=down] (4,2); 
\draw (2.5,0) to [out = 180-50, in = -15] (0,1.2) to [out = 15, in = down] (1.0,2); 
\draw (3,0) \braidup (3.75,0.6) \braidup (3.0,1.3)\braidup (3.5,2); 
\draw (3.5,0) \braidup (2.0,2);
\draw (4,0) to [out=50,in=-180+35] (5,1.6) to [out=180-35, in=down] (4.5,2); 
\draw (4.5,0) \braidup (3.0,2); 
\bdot{2.0,0.75};
\bdot{2.65,0.45};
\bdot{2.95,0.85};
\bdot{0.56,1.75};
\bdot{1.45,1.65};
\bdot{1.50,1.85};
\bdot{3.53,1.15};
\bdot{4.50,1.30};
\end{tikzpicture}
$$
In \cref{thm:tRPol}, we construct a faithful polynomial representation
in which the two mirror generators act by reflection operators, with
the left reflection carrying a factor determined by the framing
parameters.

We then use these algebras to formulate a diagrammatic version of the iGKLO construction appearing in \cite{SSX25,LWW}. More specifically, we choose an idempotent $e(\aleph)$ in $\tR$ and,
using the iGKLO formulas, embed the truncated shifted iYangian
$\trust$ into the corner $e(\aleph)\tR e(\aleph)$; see \cref{lambsh}.  This gives an inclusion
\(
    \trust \hookrightarrow e(\aleph)\tR e(\aleph).
\)

To study the weight modules over $\trust$, we also introduce a flag version of the truncated shifted iYangian, following the strategy of \cite{KTWWY19}.  This flag algebra $\ftrust$ is generated by the image of the truncated shifted iYangian together with polynomial multiplication operators and the appropriate nilHecke operators.  We prove in \cref{thm:morita} that $\ftrust$ is Morita equivalent to $\trust$.  Moreover, in \cref{thm:flag=ab}, we also obtain explicit diagrammatic generators of $\ftrust$ for later use. 

Then we consider integral weight modules over $\ftrust$. The integrality condition allows us to encode generalized weight spaces by configurations of black strands placed in intervals determined by the parameters. This leads to the definition of the interval oKLRW algebra $\intklrw$, which is an idempotent truncation of the oKLRW algebra whose objects are precisely the interval configurations relevant to integral weights.

The interval oKLRW algebra is generated by local nilHecke operators inside each interval, together with long crossings and long reflections which move black strands between different intervals and across the mirror; see \cref{prop:intgen}. In particular, the polynomial actions of generators of $\intklrw$ match the action of the explicit diagrammatic generators of $\ftrust$ obtained in \cref{thm:flag=ab}.  In this way, the interval oKLRW algebra provides the diagrammatic algebra which controls the integral weight spaces of $\ftrust$.

We write $\fywtmod$ to denote the category of finitely generated $\ftrust$-weight modules with integral weights. Let $\intmodnil$ denote the category of finitely generated $\intklrw$-modules on which all dot generators act nilpotently. The main result of the paper is the following:
\begin{theorem}[\cref{thm:equivalence}] 
    We have the following equivalence of categories\begin{equation*}
   \fywtmod
    \cong
   \intmodnil .
\end{equation*}
\end{theorem}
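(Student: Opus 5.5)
The plan follows the strategy of \cite{KTWWY19}: compare weight spaces with idempotent truncations, and compare completions. Write $\mathcal{A}=\ftrust$ and $\mathcal{B}=\intklrw$.

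\emph{Step 1: weight functor and its target.} For an integral weight module $M$ over $\mathcal{A}$, decompose $M=\bigoplus_{\mathbf{i}} M_{\mathbf{i}}$ into generalized weight spaces for the polynomial subalgebra (the Gelfand--Tsetlin-type subalgebra generated by the dot/polynomial operators in $\ftrust$). Integrality means every generalized eigenvalue lies in $\Z$ (or the appropriate shifted lattice), up to the parameter shifts. The key bookkeeping step is to build a bijection between integral weights modulo the action of the nilHecke/Weyl-type operators in $\mathcal{A}$ and the objects of $\mathcal{B}$, namely interval configurations of black strands. Concretely, a generalized eigenvalue $a$ of a variable $x_k$ places strand $k$ in the interval between consecutive red strands (or parameter points) containing $a$, and it is reflected across the mirror when $a$ lies on the negative side. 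I then define
\[
\Phi(M)=\bigoplus_{\text{configurations } c} M_{c},
\]
where $M_c$ is the sum of the generalized weight spaces whose weights lie in the cell $c$. On $M_c$ the polynomial generator $y$ acts as $a+(\text{nilpotent})$, and I let the dot generators of $\mathcal{B}$ act by the nilpotent part, shifted by the interval endpoint.

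\emph{Step 2: match the generators.} By \cref{thm:flag=ab}, $\mathcal{A}$ is generated by explicit diagrammatic elements acting on the polynomial representation inside $e(\aleph)\tR e(\aleph)$. By \cref{prop:intgen}, $\mathcal{B}$ is generated by local nilHecke elements, long crossings and long reflections. I pass to the completion of $\mathcal{A}$ at the maximal ideals of the polynomial subalgebra corresponding to integral weights, giving $\widehat{\mathcal{A}}=\prod_{c,c'}\widehat{e_c}\,\mathcal{A}\,\widehat{e_{c'}}$. I also take the completion $\widehat{\mathcal{B}}$ of $\mathcal{B}$ with respect to dot degree. Both algebras act faithfully on completions of the polynomial representations: for $\mathcal{B}$ this is \cref{cor:faithfulpolyrep} restricted to interval idempotents, and for $\mathcal{A}$ it is \cref{thm:tRPol}. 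I then construct a continuous isomorphism $\widehat{\mathcal{A}}\cong\widehat{\mathcal{B}}$ in two stages. First I identify the completed polynomial representations by a change of variables: shift by the interval base points, and apply a power-series rescaling that absorbs the difference between shift operators and crossings. Under this identification I check, generator by generator, that the diagrammatic generators of $\mathcal{A}$ act by elements of $\widehat{\mathcal{B}}$ and vice versa, using the statement after \cref{prop:intgen} that their polynomial actions match. Faithfulness then turns these equalities of operators into an isomorphism of algebras.

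\emph{Step 3: conclude.} Finitely generated integral weight modules over $\mathcal{A}$ are exactly the finitely generated modules over $\widehat{\mathcal{A}}$ that are discrete, meaning each vector is killed by a power of the maximal ideal and only finitely many idempotents act nontrivially on each vector. Likewise, nilpotent $\mathcal{B}$-modules are exactly the discrete $\widehat{\mathcal{B}}$-modules. The isomorphism from Step 2 therefore gives the equivalence; finite generation is preserved because $\Phi$ commutes with idempotent truncations.

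\emph{Main obstacle.} I expect the hard step to be the generator matching in Step 2, specifically the long crossings and long reflections that move a strand between different intervals or across the mirror. On the Yangian side these act by difference operators with rational coefficients. Those coefficients must be shown to be invertible power series exactly when no red strand or mirror point separates the relevant weights, so that they can be absorbed into the isomorphism. I would handle this as in \cite{KTWWY19}: compute the action on each weight space in the polynomial representation, factor out the unit part, and use the reflection formula from \cref{thm:tRPol} with its framing factor to treat the mirror case.
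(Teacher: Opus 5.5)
Steps 1 and 2 both treat integral weights and objects of $\mathcal{B}=\intklrw$ as if they corresponded one-to-one, and this is where the argument breaks.

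\textbf{Orbits versus objects.} The map $\ba\mapsto\nu_\ba$ is many-to-one, and its fibres are exactly the $\Sigma$-orbits. This is because $\nu_\ba$ only records how many coordinates of each colour lie in each interval. For example, if $v_i=2$ and $a_{i,1}\neq a_{i,2}$, then $\ba$ and $s_{i,1}\ba$ give the same $\nu$.

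\emph{Consequence for Step 1.} Your $M_c$ (all weight spaces in the cell) is a sum of $|\Sigma\cdot\ba|$ weight spaces. These are all isomorphic to one another via the invertible intertwiners $\chi_\pi\colon W_\ba(M)\xrightarrow{\sim}W_{\pi\ba}(M)$. So $\Phi$ over-counts by the orbit size, which also varies with $c$. The correct truncation keeps one representative per orbit, $e(\nu)\Gamma(N)=W_{\ba(\nu)}(N)$ with $\ba(\nu)$ weakly increasing, as in \cref{prop:Gamma}.

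\emph{Consequence for Step 2.} For the same reason, the isomorphism $\widehat{\mathcal A}\cong\widehat{\mathcal B}$ does not exist in the form you describe. Whichever polynomial subalgebra you complete at, $\widehat{\mathcal A}$ contains, for an orbit of size $d$, the $d$ idempotents $e_{\pi\ba}$, mutually isomorphic via $\chi_\pi$. At the same place $\widehat{\mathcal B}$ has the single idempotent $e(\nu_\ba)$. Likewise, your change of variables identifies $\bigoplus_\ba\widehat P_\ba$ only with $\bigoplus_\ba\widehat{\Pol}(\nu_\ba)\delta_\ba$, which has multiplicities. It does not identify it with the completed polynomial representation of $\mathcal B$.

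\emph{What can be true.} At best you get a Morita equivalence: $\widehat{\mathcal B}\cong e\,\widehat{\mathcal A}\,e$ with $e=\sum_{\ba\ \text{weakly increasing}}e_\ba$, plus fullness of $e$, which comes from invertibility of the $\chi_\pi$. With that repair, your Step 3 becomes a Morita-theoretic version of the argument. The paper avoids the algebra level altogether. Its functor $\Theta(M)=\bigoplus_\ba e(\nu_\ba)M\delta_\ba$ deliberately duplicates $e(\nu)M$ along each fibre. When $a_{i,k}\neq a_{i,k+1}$, $\sigma_{i,k}$ mixes the $\delta_\ba$- and $\delta_{s_{i,k}\ba}$-components (see \cref{taco2}). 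Then $\Theta\circ\Gamma\cong\mathrm{Id}$ is proved using the isomorphisms $\chi_{\pi_\ba}$.

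\textbf{Wrong polynomial representation.} There is a second gap in the generator matching. You use the ordinary polynomial representation (\cref{cor:faithfulpolyrep}) and propose to absorb discrepancies by a power-series rescaling. But in $\ctP$ a black strand picks up dots only when it crosses a red strand from right to left. The framing factors of $\Gamma_{i,k}$ and $\Gamma_{\tau i,k}$ with vanishing constant term occur both for leftward moves (matched by an even red strand) and for rightward moves (matched by an odd red strand).

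For instance, take $a_{i,k}=-n-\frac34$. Then $\Gamma_{i,k}$ moves a $\tau i$-strand from interval $2n+2$ to interval $2n+4$. It carries the factor $Y^{z_{i,-n-1}}$, which has to come from crossing $\rlam^{(2n+3)}$ from left to right. Under the naive change of variables, the two sides then differ by non-unit monomials $Y^{z}$. No rescaling by units removes these, so long crossings and the $\Gamma$'s no longer correspond up to units.

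This is exactly why the paper introduces the parity polynomial representation $\ctP^{\mathrm{par}}$ of \cref{eq:paritypol}. With it, the shift-and-sign identification $\Phi_\ba$ matches the two actions. The only leftover factors have nonzero constant term, hence are units in the completion. The criterion in your ``main obstacle'' paragraph (units exactly when no red strand separates the weights) should be replaced by this parity-dependent matching.
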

More precisely, we construct functors (see \cref{prop:Theta,prop:Gamma})
\[
    \Theta:
   \intmodnil
    \longrightarrow
   \fywtmod,
    \qquad
    \Gamma:
   \fywtmod
    \longrightarrow
   \intmodnil,
\]
and prove that they are mutually inverse equivalences. The functor $\Theta$ reconstructs a weight module by assigning the
summand $e(\nu_{\mathbf a})M$ to each integral weight $\mathbf a$,
while $\Gamma$ takes the direct sum of the weakly increasing integral
generalized weight spaces and equips it with the interval oKLRW action.
Combining this equivalence with the Morita equivalence between $\ftrust$ and $\trust$, we obtain a diagrammatic description of integral weight modules over truncated shifted iYangians.

\subsection{Further directions}

We end the introduction by mentioning several directions for future work. First, as mentioned in \cref{rem:extend}, it would be natural to extend the construction of oKLRW algebras to iquivers with fixed vertices.  The corresponding oKLR algebras and their polynomial representations have already been studied in \cite{Prz23}, and one expects that the oKLRW construction admits an analogous extension by adding Webster's red strands to that setting.  This should lead to diagrammatic iGKLO homomorphisms compatible with the shifted iYangians attached to more general quasi-split Satake diagrams.

Second, one can ask for a categorification-theoretic interpretation of oKLRW algebras.  Ordinary KLRW algebras categorify tensor products of integrable highest-weight modules; see \cite{Web17a}.  Since oKLR algebras are closely related to quantum symmetric pairs and iquivers, it is natural to ask whether oKLRW algebras categorify tensor product representations in quantum-symmetric-pair setting. 

Third, it would be desirable to develop an intrinsic analogue of category $\mathcal O$ for shifted iYangians.
Webster \cite{Web16} constructs categories $\mathcal O$ for various quantized Coulomb branches of cotangent type and relates them, via Koszul duality, to the corresponding Higgs-side categories $\mathcal O$.
In the fixed-point-free setting considered in this paper, the Coulomb branches associated to quivers with involution are still of cotangent type; see \cite{SSX25}.
It would therefore be interesting to understand how Webster's construction applies in our setting and, in particular, how the resulting category $\mathcal O$ is reflected on the oKLRW side.

\subsection*{Acknowledgment}
We would like to thank Kang Lu, Changjian Su, Ben Webster and Weiqiang Wang for useful discussions. YS is partially supported by the Fields Institute.  LS is partially supported by NSFC (Grant No. 12071346).

\section{Orientifold KLRW algebra}
In this section, we review the definition and basic properties of KLR algebras and orientifold KLR (oKLR) algebras following \cite{KL09,Rou,VV11,Prz23}, with particular emphasis on their polynomial representations. We then introduce an extension of the oKLR algebra by adding red strands, analogous to the passage from KLR algebras to KLRW algebras; see \cite{KTWWY19}.

\subsection{KLR algebras}
\label{sec:KLR}
Fix once and for all an algebraically closed ground field $\kk$.
Let $\Qui = (Q_0, Q_1, s, t)$ be a loop-free quiver, where $Q_0$ denotes the set of vertices and $Q_1$ the set of arrows.  
For any $h \in Q_1$, we write $s(h)$ (resp.\ $t(h)$) for the source (resp.\ target) of the arrow $h$.  
For $i, j \in Q_0$, let $m_{i,j}$ denote the number of arrows from $i$ to $j$.

The associated (symmetric) \emph{Cartan matrix} $C = (c_{i,j})_{i,j \in Q_0}$ is defined by
\[
c_{i,i} := 2, 
\qquad 
c_{i,j} := - m_{i,j} - m_{j,i} \quad \text{for } i \neq j .
\]
The matrix $C$ determines a symmetric Kac--Moody algebra $\mathfrak{g}$ with Cartan subalgebra $\mathfrak{h}$. 

We fix a choice of root datum for $\mathfrak{g}$.  
This determines a \emph{weight lattice} $P$, which is a finitely generated free abelian group equipped with a symmetric bilinear form
\[
P \times P \to \Q, 
\qquad 
(\lambda, \mu) \mapsto \lambda \cdot \mu ,
\]
and containing \emph{simple roots} $(\alpha_i)_{i \in Q_0}$ and \emph{fundamental weights} $(\varpi_i)_{i \in Q_0}$ satisfying
\[
\alpha_i \cdot \alpha_j = c_{i,j},
\qquad
\alpha_i \cdot \varpi_j = \delta_{i,j},
\quad \text{for all } i,j \in Q_0 .
\]
A weight $\lambda\in P$ is said to be \emph{dominant} if $\lambda\in \displaystyle\bigoplus_{i\in Q_0}\N \varpi_i$. Let $\dom$ denote the set of all dominant weights. 


The \emph{root lattice} is
\[
X := \bigoplus_{i \in Q_0} \Z \alpha_i \subset P ,
\]
and we set
\[
X^+ := \bigoplus_{i \in Q_0} \N \alpha_i \subset X .
\]
For $\alpha = \sum_{i \in Q_0} c_i \alpha_i \in X^+$, the \emph{height} of $\alpha$ is defined by
\(
\height(\alpha) := \sum_{i \in Q_0} c_i .
\)

Finally, let $\Word$ denote the set of all words in the alphabet $Q_0$.  
For $\alpha \in X^+$ of height $n$, let $\Word_\alpha \subset \Word$ consist of all words
\(
\bi = i_1 \cdots i_n
\)
such that
\[
\alpha_{\bi}:=\alpha_{i_1} + \cdots + \alpha_{i_n} = \alpha .
\]
The symmetric group $S_n$ acts on $\Word_\alpha$ by permuting the letters in the natural way. We denote by $s_i$ for $i=1,2,\ldots,n-1$ the simple reflections of $S_n$.

It is common and convenient to describe KLR algebras diagrammatically. Throughout, we assume that the underlying quiver is \emph{simply laced}, that is,
\[
m_{i,j} + m_{j,i} \leq 1 \qquad \text{for all } i \neq j .
\]
Let $\cR$ be the graded $\kk$-linear strict monoidal category generated by objects
$i$ for $i \in Q_0$, together with homogeneous morphisms
\[x=
\begin{tikzpicture}
    [anchorbase]
    \draw (0,0) \botlabel{i} -- (0,.4)\toplabel{i};
     \bdot{0,.2};
\end{tikzpicture} \colon i \to i \quad \text{of degree } 2,
\qquad \sigma=
\begin{tikzpicture}
    [anchorbase]
    \draw (0,0) \botlabel{i} \braidup (.4,.4) \toplabel{i};
    \draw (.4,0) \botlabel{j} \braidup (0,.4) \toplabel{j};
\end{tikzpicture}  \colon ij \to ji \quad \text{of degree } - \alpha_i \cdot \alpha_j,
\]
for all $i,j \in Q_0$ subject to the following generating relations:
\begin{gather}
\label{dotcross}
    \begin{tikzpicture}
        [baseline=.6]
         \draw (0,0) \botlabel{i} \braidup (.4,.4);
    \draw (.4,0) \botlabel{j} \braidup (0,.4);
    \bdot{.04,.3};
    \end{tikzpicture}
    =
     \begin{tikzpicture}
        [baseline=.6]
         \draw (0,0) \botlabel{i} \braidup (.4,.4);
    \draw (.4,0) \botlabel{j} \braidup (0,.4);
    \bdot{.36,.1};
    \end{tikzpicture}
    +\delta_{i,j}
    \begin{tikzpicture}
         [baseline=.6]
         \draw (0,0) \botlabel{i} \braidup (0,.4);
    \draw (.4,0) \botlabel{j} \braidup (.4,.4);
    \end{tikzpicture},
    \qquad
        \begin{tikzpicture}
        [baseline=.6]
         \draw (0,0) \botlabel{i} \braidup (.4,.4);
    \draw (.4,0) \botlabel{j} \braidup (0,.4);
    \bdot{.04,.1};
    \end{tikzpicture}
    =
     \begin{tikzpicture}
        [baseline=.6]
         \draw (0,0) \botlabel{i} \braidup (.4,.4);
    \draw (.4,0) \botlabel{j} \braidup (0,.4);
    \bdot{.36,.3};
    \end{tikzpicture}
    +\delta_{i,j}
    \begin{tikzpicture}
         [baseline=.2]
         \draw (0,0) \botlabel{i} \braidup (0,.4);
    \draw (.4,0) \botlabel{j} \braidup (.4,.4);
    \end{tikzpicture}, \\
    \label{doublecross}
    \begin{tikzpicture}
        [baseline=.8em]
         \draw (0,0) \botlabel{i} \braidup (.4,.4) \braidup (0,.8);
    \draw (.4,0) \botlabel{j} \braidup (0,.4) \braidup (.4,.8);
    \end{tikzpicture}
    =\delta_{i\neq j}
    \begin{tikzpicture}
        [baseline=1em]
         \draw (0,0) \botlabel{i} \braidup (0,.8);
    \draw (.4,0) \botlabel{j} \braidup (.4,.8);
    \coupon{.2,.4}{f_{ij}(y_1,y_2)};
    \end{tikzpicture}, \\
    \label{braidA}
    \begin{tikzpicture}
        [baseline=1em]
        \draw (0,0) \botlabel{i} \braidup (1,1);
    \draw (1,0) \botlabel{k} \braidup (0,1);
    \draw (.5,0) \botlabel{j} \braidup (0,.5) \braidup (.5,1);
    \end{tikzpicture}
    =
        \begin{tikzpicture}
        [baseline=1em]
        \draw (0,0) \botlabel{i} \braidup (1,1);
    \draw (1,0) \botlabel{k} \braidup (0,1);
    \draw (.5,0) \botlabel{j} \braidup (1,.5) \braidup (.5,1);
    \end{tikzpicture}
    +\delta_{i=k\neq j}
    \begin{tikzpicture}
        [baseline=1em]
        \draw (0,0) \botlabel{i} \braidup (0,1);
    \draw (.8,0) \botlabel{k} \braidup (.8,1);
    \draw (.4,0) \botlabel{j} \braidup (.4,.5) \braidup (.4,1);
    \coupon{.4,.5}{\frac{f_{ij}(y_3,y_2)-f_{ij}(y_1,y_2)}{y_3-y_1}};
    \end{tikzpicture},
\end{gather}
where we let $y_k$ denote the dot on the $k$th black strand read from
left to right and
\[X_{ij}(u,v)= \begin{cases}
  1 & i \not \leftarrow j\\
 u-v & i\leftarrow j\\
\end{cases} \qquad f_{ij}(u,v)=X_{ij}(u,v)X_{ji}(v,u)=
\begin{cases}
  1 & i \not \leftrightarrow j\\
 u-v & i\leftarrow j\\
v-u & i\to j
\end{cases}
\]
We refer to $\cR$ as the quiver Hecke category. Then we define the {\em KLR algebra} associated with $\alpha\in X^+$ to be 
\[
\cR(\alpha):=\bigoplus_{\bi,\bj\in \Word_{\alpha}} \Hom_{\cR}(\bi,\bj)
\]
 so that multiplication in $\cR(\alpha)$ corresponds to vertical composition of morphisms in $\cR$. There is an obvious isomorphism of categories $\bT:\cR \cong \cR^\op$ given by reflection with respect to the horizontal axis.

\subsection{oKLR algebras}
\label{sec:oKLR}

We refer to \cite[Definition~5.1]{VV11} and \cite[Definition~2.4]{Prz23} for the algebraic definition of the oKLR algebra.  
For our purposes, we adopt a diagrammatic description of the oKLR algebra, in the spirit of Section~\ref{sec:KLR}.  
In particular, the construction depends on the choice of a dominant weight.  
For any dominant weight $\mu\in \dom$ of the form
\(
\mu = \sum_{i \in Q_0} \mu_i \varpi_i ,
\)
we define
\begin{gather}
    \label{fi}
    f^\mu_i(u)=(-u)^{\mu_i} u^{\mu_{\tau i}}, \text{ for all }i\in Q_0.
\end{gather}

Our interpretation of the oKLR algebra comes from a module category over the quiver Hecke category $\cR$. Let $(\cC,\otimes,\one )$ be a $\kk$-linear strict monoidal category, and let $\cM$ be a $\kk$-linear category.  Let $\cEnd(\cM)$ denote the strict monoidal category of $\kk$-linear endofunctors of $\cM$.  A \emph{right action} of $\cC$ on $\cM$ is a monoidal functor $\bA \colon \cC \to \cEnd(\cM)^\rev$, where $\cD^\rev$ denotes the \emph{reverse} of a monoidal category $\cD$, where we reverse the tensor product.  Given such a right action, we also say that $\cM$ is a \emph{right module category} over $\cC$, or \emph{$\cC$-module} for short; see \cite[Section 7.1]{ENGO15}.  For objects $C \in \cC$ and $M \in \cM$, we set
\begin{equation} \label{beaver}
    M \otimes C := \bA(C)(M).
\end{equation}
We say that the $\cC$-module $\cM$ is \emph{strict} if $\bA$ is a strict monoidal functor.  Equivalently, $\cM$ is strict if
\[
    M \otimes (C_1 \otimes C_2) = (M \otimes C_1) \otimes C_2,\qquad
    \text{for all } M \in \cM,\ C_1,C_2 \in \cC,
\]
and similarly for morphisms. The notion of a \emph{presentation} of $\mathcal{C}$-modules by generators and relations is introduced in \cite[Sect.~2.2]{SSS25}, to which we refer for details. 

Let $\cM$, $\cN$ be strict $\cC$-modules.  A \emph{morphism of $\cC$-modules} from $\cM$ to $\cN$ is a pair $(\bH,\omega)$, where $\bH$ is a $\kk$-linear functor from $\cM$ to $\cN$ and $\omega$ is a natural isomorphism with components
\[
    \omega_{M,C} \colon \bH(M \otimes C) \xrightarrow{\cong} \bH(M) \otimes C,\qquad M \in \cM,\ C \in \cC,
\]
such that the diagram
\begin{equation} \label{buffalo}
    \begin{tikzcd}
        & \bH (M \otimes C \otimes D) \arrow[dl,"\omega_{M \otimes C, D}"'] \ar[dr,"\omega_{M, C \otimes D}"] &
        \\
        \bH(M \otimes C) \otimes D \arrow[rr,"\omega_{M,C} \otimes 1_D"] & & \bH(M) \otimes C \otimes D
    \end{tikzcd}
\end{equation}
commutes for all $M \in \cM$ and $C,D \in \mathcal{C}$.  (See \cite[Def.~7.2.1]{ENGO15} for a more general definition, where the module categories are not required to be strict.)  We say that a morphism of $\cC$-modules is \emph{strict} if $\omega_{M,C}$ is the identity morphism for all $M \in \cM$ and $C \in \cC$.  An \emph{equivalence of $\cC$-modules} is a morphism of $\cC$-modules that is also an equivalence of categories.

\begin{definition}
\label{defcRB}
    The iquiver Hecke category associated with a dominant weight $\mu$, denoted as $\cRB$, is the $\cR$-module generated by the morphisms
    \begin{gather*}
        \begin{tikzpicture}
            [anchorbase]
            \mirror{-.5}{0}{1};
            \draw (0,0) \botlabel{i} to[out=up,in=right] (-.5,.5) to[out=right,in=down] (0,1)\toplabel{\tau i};
        \end{tikzpicture} \colon i \to \tau i,\quad \text{ for all } i\in Q_0,
    \end{gather*}
    subject to the relations (for all $i,j\in Q_0$)
    \begin{gather}
    \label{quadratic}
        \begin{tikzpicture}
            [anchorbase]
            \mirror{-.25}{0}{1};
             \draw (0,0) \botlabel{i} to[out=up,in=right] (-.25,.25) to[out=right,in=down] (0,.5) to [out=up,in=right] (-.25,.75) to[out=right,in=down] (0,1)\toplabel{i};
        \end{tikzpicture}
        =
         \begin{tikzpicture}
            [anchorbase]
            \mirror{-.25}{0}{1};
             \draw (.6,0) \botlabel{i} -- (.6,1)\toplabel{i};
             \coupon{.6,.5}{f^\mu_{i}(-y_1)};
        \end{tikzpicture},
        \qquad
             \begin{tikzpicture}
            [anchorbase]
            \mirror{-.5}{0}{1};
            \draw (0,0) \botlabel{i} to[out=up,in=right] (-.5,.5) to[out=right,in=down] (0,1)\toplabel{\tau i};
            \bdot{-.1,.3};
        \end{tikzpicture}
        =-\ 
         \begin{tikzpicture}
            [anchorbase]
            \mirror{-.5}{0}{1};
            \draw (0,0) \botlabel{i} to[out=up,in=right] (-.5,.5) to[out=right,in=down] (0,1)\toplabel{\tau i};
            \bdot{-.1,.7};
        \end{tikzpicture},\\
    \label{reflection}
    \begin{tikzpicture}
        [anchorbase]
        \mirror{-.25}{0}{1};
       \draw (0,0) \botlabel{i} to[out=up,in=-15] (-.25,.2) to [out=15,in=down] (.3,.6) \braidup (0,1) \toplabel{\tau i};
        \draw (.3,0) \botlabel{j} to[out=up,in=right] (-.25,.5)  to [out=right,in=down] (.3,1) \toplabel{\tau j};
    \end{tikzpicture}
    -
    \begin{tikzpicture}
        [anchorbase]
        \mirror{-.25}{0}{1};
        \draw (0,0) \botlabel{i} \braidup (.3,.4) to [out=up,in=-15] (-.25,.8) to [out=15,in=down] (0,1) \toplabel{\tau i};
        \draw (.3,0) \botlabel{j} to[out=up,in=right] (-.25,.5)  to [out=right,in=down] (.3,1) \toplabel{\tau j};
    \end{tikzpicture}
    =\ \delta_{\tau i,j}\ 
        \begin{tikzpicture}
            [anchorbase]
            \mirror{-.6}{0}{1.2};
            \draw (.4,0)\botlabel{i} \braidup (.8,.4) -- (.8,1.2) \toplabel{i};
            \draw (.8,0)\botlabel{j} \braidup (.4,.4) -- (.4,1.2) \toplabel{j};
            \coupon{.6,.7}{\frac{f^\mu_i(y_1)-f^\mu_j(y_2)}{y_1+y_2}};
        \end{tikzpicture}. 
    \end{gather}
We refer to $\lambda$ as the framing weight of $\cRB$.
\end{definition} 
\begin{remark}
    Note that our convention for \cref{reflection} differs from that in \cite{VV11,Prz23} by a sign.  
We adopt this choice so that the polynomial representation of $\cRB$ is compatible with that of the KLRW algebras; see \cref{Polklrw1,Polklrw2}.  
This sign discrepancy ultimately comes from the fact that the polynomial representation used in \cite[Proposition~2.7]{Prz23} differs by a sign from that in \cite[Lemma~4.12]{Web17a} on the generators
\(
\begin{tikzpicture}[anchorbase]
    \draw (0,0) \botlabel{i} \braidup (.4,.4) \toplabel{i};
    \draw (.4,0) \botlabel{i} \braidup (0,.4) \toplabel{i};
\end{tikzpicture}
\)
for all $i\in Q_0$.
\end{remark}

\begin{remark}
\label{embed}
In $\cRB$, the identity morphism of the unit object $\one$ is represented by the diagram
\[
\begin{tikzpicture}[anchorbase]
    \mirror{-.5}{0}{1};
\end{tikzpicture}
\; : \; \one \to \one ,
\]
which we refer to as a {mirror}.
 The relations \cref{quadratic,reflection} are diagrammatic interpretations of relations in \cite[Def. 5.1]{VV11}. 
\end{remark}
For any $\alpha\in X^+$, we define $\talpha:=\tau \alpha+\alpha$ and  
\begin{gather}
\label{objRB}
    \objRB{\alpha}:=\{ \bi\in \Word\mid \talpha=\talpha_{\bi} \}.
\end{gather}
Then the oKLR algebra $\cRB(\alpha)$ associated with $\alpha$ and $\lambda$ is defined to be 
\begin{gather}
\label{oKLRalgebra}
\cRB(\alpha):= \bigoplus_{\bi,\bj\in \objRB{\alpha}}\Hom_{\cRB}(\bi,\bj).
\end{gather}
Note that $\Word_{\alpha}\subset \objRB{\alpha}$. By \cref{embed}, we may regard $\cR(\alpha)$ as a natural subalgebra of $\cRB(\alpha)$. 
\begin{lemma}
    There is a  $\kk$-linear isomorphism of categories \[\tilde{\bT}:\cRB \rightarrow \cRB^{\op}\] given by reflection with respect to the horizontal axis.
\end{lemma}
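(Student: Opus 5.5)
Since $\cRB$ is presented as an $\cR$-module category by generators and relations (in the sense of \cite[Sect.~2.2]{SSS25}), the plan is to define $\tilde{\bT}$ on generators and check that it respects the defining relations. On objects, $\tilde{\bT}$ is the identity: objects of $\cRB$ are words $\bi\in\Word$, with the mirror acting as the unit $\one$. On morphisms, $\tilde{\bT}$ flips a diagram upside down, so a morphism $\bi\to\bj$ goes to a morphism $\bj\to\bi$. On the generators this means: a dot goes to a dot; the crossing $ij\to ji$ goes to the crossing $ji\to ij$; and the reflection generator $i\to\tau i$ goes to the reflection generator $\tau i\to i$, which is again a generator because $\tau$ is an involution. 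Flipping reverses vertical composition and preserves horizontal juxtaposition, which is exactly what is needed for a functor $\cRB\to\cRB^{\op}$ compatible with the right $\cR$-action. On $\cR$ itself this functor restricts to $\bT$.

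We then check relations. The KLR relations \cref{dotcross,doublecross,braidA} are already known to be preserved, since $\bT:\cR\cong\cR^{\op}$. Concretely, flipping \cref{dotcross} swaps its two versions, and \cref{doublecross} and \cref{braidA} are flip-symmetric because polynomial coupons are invariant. For \cref{braidA}, the correction term is symmetric in the required sense because the condition $i=k$ is preserved.

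Next, the mirror relations \cref{quadratic}. The double reflection relation is flip-symmetric, with right-hand side $f^\mu_i(-y_1)$ on a vertical strand, which is unchanged. The dot-sliding relation flips to the statement that a dot above the reflection equals minus a dot below it; this is the same relation after multiplying by $-1$.

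The main obstacle is \cref{reflection}, where one must confirm that the flipped relation is still a consequence of the defining relations. Flipping the left-hand side gives the difference of the two diagrams with bottom labels $\tau i,\tau j$, in the opposite order. After relabeling $i':=\tau i$, $j':=\tau j$, this is exactly the left-hand side of \cref{reflection} for the pair $(i',j')$ up to an overall sign. The right-hand side flips to a coupon $\frac{f^\mu_i(y_1)-f^\mu_j(y_2)}{y_1+y_2}$ placed below a crossing, on strands now labeled $j,i$ at the bottom. Here I would use $f^\mu_{\tau k}(u)=(-u)^{\mu_{\tau k}}u^{\mu_k}=(-1)^{\mu_k+\mu_{\tau k}}f^\mu_k(-u)$ together with $\delta_{\tau i,j}$. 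These identify the flipped coupon with the required one, up to the same sign. One then uses \cref{dotcross} to slide the symmetric polynomial through the crossing; the correction terms vanish because the coupon, after substitution, is symmetric under the relevant exchange when $j=\tau i$. If a sign mismatch remains, the fallback is to define $\tilde{\bT}$ on the reflection generator with a sign $(-1)^{\mu_i}$, which is compatible with \cref{quadratic}.

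Once all relations are checked, $\tilde{\bT}$ is well defined. It squares to the identity on generators, so it is an isomorphism.
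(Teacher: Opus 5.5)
Your overall plan is the paper's: define $\tilde{\bT}$ on generators, inherit the KLR relations from $\bT$, and check \cref{quadratic,reflection}. Your handling of \cref{quadratic} is fine, and so is your observation that the flipped left-hand side of \cref{reflection} is $-1$ times the left-hand side for $(\tau i,\tau j)$. The gap is in the remaining step.

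Put $g_{ab}(y_1,y_2):=\frac{f^\mu_a(y_1)-f^\mu_b(y_2)}{y_1+y_2}$. When $j=\tau i$ we have $(\tau i,\tau j)=(j,i)$, and what must be shown is
\[
\sigma_1\, g_{ij}(y_1,y_2)\, e(j,i)=-\,g_{ji}(y_1,y_2)\,\sigma_1\, e(j,i),
\]
where $\sigma_1 e(j,i)$ is the crossing $ji\to ij$.
\begin{itemize}
\item The correction terms in \cref{dotcross} carry $\delta_{i,j}$. They vanish because $i\neq\tau i=j$, i.e.\ because $\tau$ has no fixed vertices. This standing hypothesis is the key input of the paper's one-line proof, and you never use it.
\item Symmetry of the coupon plays no role. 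In fact the coupon is not symmetric in general: $g_{ij}(y_2,y_1)=(-1)^{\mu_i+\mu_{\tau i}+1}g_{ij}(y_1,y_2)$.
\item Once dots slide freely, the left side is $g_{ij}(y_2,y_1)\sigma_1e(j,i)$. The required sign is then the tautology $g_{ij}(y_2,y_1)=-g_{ji}(y_1,y_2)$. No relation between $f^\mu_{\tau k}$ and $f^\mu_k$ is needed.
\end{itemize}

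The relation you planned to use is also incorrect. In fact $f^\mu_{\tau k}(u)=(-u)^{\mu_{\tau k}}u^{\mu_k}=f^\mu_k(-u)$ exactly, without the factor $(-1)^{\mu_k+\mu_{\tau k}}$. Carrying that spurious sign would manufacture a mismatch, and your fallback cannot repair one:
\begin{itemize}
\item Suppose each reflection generator $a\to\tau a$ is sent to $\epsilon_a$ times its flip. Then the double reflection in \cref{quadratic} is multiplied by $\epsilon_i\epsilon_{\tau i}$ while $f^\mu_i(-y_1)$ is not, which forces $\epsilon_i\epsilon_{\tau i}=1$.
\item For $j=\tau i$, both terms on the left of \cref{reflection} contain one reflection of each label $i$ and $\tau i$. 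So the twist multiplies the flipped left side by the same $\epsilon_i\epsilon_{\tau i}=1$ and changes nothing.
\item With your choice $\epsilon_a=(-1)^{\mu_a}$, \cref{quadratic} actually fails whenever $\mu_i+\mu_{\tau i}$ is odd. The functor would also no longer be the horizontal reflection of the statement.
\end{itemize}
Replacing the second half of your \cref{reflection} paragraph by the short argument above gives exactly the paper's proof.
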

\begin{proof}
  Since reflection across the horizontal axis induces a $\kk$-linear isomorphism of categories
\(
\cR \cong \cR^{\op},
\)
it suffices to verify that the relations in \cref{quadratic,reflection} are preserved under $\tilde{\bT}$.  
This is straightforward for \cref{quadratic}.  
For \cref{reflection}, assume that $\tau i = j$. Then necessarily $i \neq j$, and the claim follows from \cref{dotcross}.
\end{proof}




\subsection{KLRW algebras}
We now recall the KLRW algebra following \cite[Definition~4.1]{Web17a}.  
In \cite{Web17a}, it is defined as the $\kk$-span of Stendhal diagrams modulo certain local relations. For our purposes, we instead present the definition in the language of strict monoidal categories, in parallel with the approach used for $\cR$.
\begin{definition} {\rm (\cite[Def.~4.1,Def.~4.4]{Web17a})}
    \label{KLRWdef}
    The KLRW category $\klrw$ is the graded $\kk$-linear strict monoidal category generated by objects $i$ and $\rlam$ for all $i\in Q_0$ and all $\rlam\in \dom$ together with homogeneous morphisms
    \begin{gather*}
\begin{tikzpicture}
    [anchorbase]
    \draw (0,0) \botlabel{i} -- (0,.4)\toplabel{i};
     \bdot{0,.2};
\end{tikzpicture} \colon i \to i \quad \text{of degree } 2,
\qquad 
\begin{tikzpicture}
    [anchorbase]
    \draw (0,0) \botlabel{i} \braidup (.4,.4) \toplabel{i};
    \draw (.4,0) \botlabel{j} \braidup (0,.4) \toplabel{j};
\end{tikzpicture}  \colon ij \to ji \quad \text{of degree } - \alpha_i \cdot \alpha_j, \\
\begin{tikzpicture}
    [anchorbase]
    \draw (0,0) \botlabel{i} \braidup (.4,.4) \toplabel{i};
    \draw[red,thick] (.4,0) \botlabel{\rlam} \braidup (0,.4) \toplabel{\rlam};
\end{tikzpicture}  \colon i\rlam \to \rlam i \quad \text{of degree }  \alpha_i \cdot \rlam, \quad
\begin{tikzpicture}
    [anchorbase]
    \draw[red,thick] (0,0) \botlabel{\rlam} \braidup (.4,.4) \toplabel{\rlam};
    \draw (.4,0) \botlabel{i} \braidup (0,.4) \toplabel{i};
\end{tikzpicture}  \colon \rlam i \to i\rlam  \quad \text{of degree }  \alpha_i \cdot \rlam,
    \end{gather*}
for all $i,j \in Q_0$ and all dominant weights $\rlam$, subjecting to \cref{dotcross}--\cref{braidA}  and the following generating relations:
\begin{gather}
\label{redbraid1}
\begin{tikzpicture}
        [baseline=1em]
        \draw (0,0) \botlabel{i} \braidup (1,1);
    \draw (1,0) \botlabel{j} \braidup (0,1);
    \draw[red,thick] (.5,0) \botlabel{\rlam} \braidup (0,.5) \braidup (.5,1);
    \end{tikzpicture}
    ~=~
        \begin{tikzpicture}
        [baseline=1em]
        \draw (0,0) \botlabel{i} \braidup (1,1);
    \draw (1,0) \botlabel{j} \braidup (0,1);
    \draw[red,thick] (.5,0) \botlabel{\rlam} \braidup (1,.5) \braidup (.5,1);
    \end{tikzpicture}
    ~+\delta_{i,j}\sum_{a+b+1=\rlam_i}~
     \begin{tikzpicture}
        [baseline=1em]
        \draw (0,0) \botlabel{i} \braidup (0,1);
        \bdot{0,.5};
        \bdot{1,.5};
        \node at (-.2,.5){$\scriptstyle a$};
        \node at (1.2,.5){$\scriptstyle b$};
    \draw (1,0) \botlabel{j} \braidup (1,1);
    \draw[red,thick] (.5,0) \botlabel{\rlam} \braidup (.5,.5) \braidup (.5,1);
    \end{tikzpicture}, \\
\label{redbraid2}
\begin{tikzpicture}
        [baseline=1em]
        \draw (0,0) \botlabel{i} \braidup (1,1);
    \draw[red,thick] (1,0) \botlabel{\rlam} \braidup (0,1);
    \draw (.5,0) \botlabel{j} \braidup (0,.5) \braidup (.5,1);
    \end{tikzpicture}
    ~=~
        \begin{tikzpicture}
        [baseline=1em]
        \draw (0,0) \botlabel{i} \braidup (1,1);
    \draw[red,thick] (1,0) \botlabel{\rlam} \braidup (0,1);
    \draw (.5,0) \botlabel{j} \braidup (1,.5) \braidup (.5,1);
    \end{tikzpicture}, \quad
    \begin{tikzpicture}
        [baseline=1em]
        \draw[red,thick] (0,0) \botlabel{\rlam} \braidup (1,1);
    \draw (1,0) \botlabel{i} \braidup (0,1);
    \draw (.5,0) \botlabel{j} \braidup (0,.5) \braidup (.5,1);
    \end{tikzpicture}
    ~=~
        \begin{tikzpicture}
        [baseline=1em]
        \draw[red,thick] (0,0) \botlabel{\rlam} \braidup (1,1);
    \draw (1,0) \botlabel{i} \braidup (0,1);
    \draw (.5,0) \botlabel{j} \braidup (1,.5) \braidup (.5,1);
    \end{tikzpicture}, \\
    \label{dotmovered}
    \begin{tikzpicture}
        [baseline=.6]
         \draw (0,0)[red,thick] \botlabel{\rlam} \braidup (.4,.4);
    \draw (.4,0) \botlabel{i} \braidup (0,.4);
    \bdot{.04,.3};
    \end{tikzpicture}
    =
     \begin{tikzpicture}
        [baseline=.6]
         \draw (0,0)[red,thick] \botlabel{\rlam} \braidup (.4,.4);
    \draw (.4,0) \botlabel{i} \braidup (0,.4);
    \bdot{.36,.1};
    \end{tikzpicture},\quad 
        \begin{tikzpicture}
        [baseline=.6]
         \draw (0,0) \botlabel{i} \braidup (.4,.4);
    \draw (.4,0)[red,thick] \botlabel{\rlam} \braidup (0,.4);
    \bdot{.04,.1};
    \end{tikzpicture}
    =
     \begin{tikzpicture}
        [baseline=.6]
         \draw (0,0)\botlabel{i} \braidup (.4,.4);
    \draw (.4,0)[red,thick]  \botlabel{\rlam} \braidup (0,.4);
    \bdot{.36,.3};
    \end{tikzpicture}, \\
    \label{reddoublecross}
    \begin{tikzpicture}
        [baseline=.8em]
         \draw (0,0)[red,thick] \botlabel{\rlam} \braidup (.4,.4) \braidup (0,.8);
    \draw (.4,0) \botlabel{i} \braidup (0,.4) \braidup (.4,.8);
    \end{tikzpicture}
    = \begin{tikzpicture}
        [baseline=.8em]
         \draw (0,0)[red,thick] \botlabel{\rlam} \braidup (0,.4) \braidup (0,.8);
    \draw (.4,0) \botlabel{i} \braidup (.4,.4) \braidup (.4,.8);
    \bdot{.4,.4}; \node at (.6,.4){$\scriptstyle \rlam_i$};
    \end{tikzpicture}, \quad
    \begin{tikzpicture}
        [baseline=.8em]
         \draw (0,0) \botlabel{i} \braidup (.4,.4) \braidup (0,.8);
    \draw (.4,0)[red,thick] \botlabel{\rlam} \braidup (0,.4) \braidup (.4,.8);
    \end{tikzpicture}
    =
     \begin{tikzpicture}
        [baseline=.8em]
         \draw (0,0) \botlabel{i} \braidup (0,.4) \braidup (0,.8);
         \bdot{0,.4}; \node at (-.2,.4){$\scriptstyle \rlam_i$};
    \draw (.4,0)[red,thick] \botlabel{\rlam} \braidup (.4,.4) \braidup (.4,.8);
    \end{tikzpicture},
\end{gather}
where $\rlam=\sum_{i\in Q_0}\rlam_i \varpi_i$ and $i,j\in Q_0$.
\end{definition}

Let $\obj{\klrw}$ denote the set of objects of $\klrw$.  
In particular, each object in $\obj{\klrw}$ is uniquely determined by a triple
\(
(\bi,\redkappa,\lamseq),
\)
where $\bi=(i_1,\dots,i_n)$ records the labels of the black strands from left to right, $\redkappa$ is a weakly increasing function
\(
\redkappa \colon [1,\ell] \to [0,n]
\)
such that the $k$-th red strand lies between the $\redkappa(k)$-th and $(\redkappa(k)+1)$-st black strands, and
\[
\lamseq = (\rlam^{(1)}, \rlam^{(2)}, \ldots, \rlam^{(\ell)})
\]
labels the red strands from left to right.  
We interpret $\redkappa(k)=0$ to mean that the $k$-th red strand lies to the left of all black strands, and $\redkappa(k)=n$ to mean that it lies to the right of all black strands.  
When $\lamseq$ is clear from the context, we omit it and write simply $(\bi,\redkappa)$ for the corresponding object. For any two sequences of dominant weights $\lamseq$ and $\lamseq\red{'}$, we write $\lamseq \cup \lamseq\red{'}$ for their obvious horizontal concatenation. Moreover, for any $\lamseq$ we define 
\begin{gather}
\label{objklrwalg}
\obj{\klrw}^{\lamseq}=\{ (\bi,\redkappa,\red{\underline{\lambda'}})\in \obj{\klrw}\mid \red{\underline{\lambda'}}=\lamseq \}.
\end{gather}
Then we define the KLRW algebra associated with $\lamseq$ to be 
\begin{equation}
    \label{klrwalg}
    \klrw^{\lamseq}:=\oplus_{(\bi,\redkappa_1),(\bj,\redkappa_2)\in \obj{\klrw}^{\lamseq}}\Hom_{\klrw}((\bi,\redkappa_1),(\bj,\redkappa_2)).
\end{equation}
We note that in \cref{klrwalg} $\Hom_{\klrw}((\bi,\redkappa_1),(\bj,\redkappa_2))=\{0\}$ unless $\alpha_\bi=\alpha_\bj$.

\begin{remark}
    As Webster explained in \cite[Prop. 4.21]{Web17a}, one can split $\lamseq$ into a sequence consisting of fundamental weights only, denoted by $\lamseq^{\red{\operatorname{spl}}}$, then the algebra $\klrw^{\lamseq}$ is isomorphic to an idempotent truncation of $\klrw^{\lamseq^{\red{\operatorname{spl}}}}$.
\end{remark}

As in \cite[Def. 4,7]{Web17a}, we let $\klrw_{\alpha}^\lamseq$ be the subalgebra of $\klrw^{\lamseq}$ where the sum of the roots associated to the black strands is $\sum_{i=1}^\ell\rlam_i-\alpha$ and let $\klrw_{n}^\lamseq$ denote the subalgebra of diagrams with $n$ black strands.

We then describe the polynomial representation of $\klrw^{\lamseq}$ inherited from the one of $\klrw$. Define
\begin{equation}
\label{eq:Pol}
 \Pol(\bi,\redkappa,\lamseq):=\kk[Y_1(\bi,\redkappa,\lamseq),\dots Y_n(\bi,\redkappa,\lamseq)] \text{ where }\bi=\{i_1,\ldots,i_n\}.
\end{equation}
As implied by \cite[Lem. 4.12]{Web17a}, there is a strict monoidal functor $\cP: \klrw \to \cVect_\kk$ such that
\begin{equation}
\label{Polobj}
\cP(\bi,\kappa,\lamseq)= \Pol(\bi,\kappa,\lamseq)
\end{equation}
for all $(\bi,\kappa,\lamseq)\in \obj{\klrw}$, and 
\begin{gather}
\label{Polklrw1}
    \cP\Big(\begin{tikzpicture}
        [baseline=.6]
         \draw (0,0)[red,thick] \botlabel{\rlam} \braidup (.4,.4);
    \draw (.4,0) \botlabel{i} \braidup (0,.4);
    \end{tikzpicture}\Big)
    ~: f\mapsto Y_1^{\rlam_i}\cdot f~\quad
    \cP\Big(\begin{tikzpicture}
        [baseline=.6]
         \draw (0,0) \botlabel{i} \braidup (.4,.4);
    \draw (.4,0)[red,thick] \botlabel{\rlam} \braidup (0,.4);
    \end{tikzpicture}\Big)
    ~: f\mapsto f~\quad
    \cP\Big(\begin{tikzpicture}
        [baseline=.6]
        \draw (0,0)\botlabel{i} -- (0,.4);
        \bdot{0,.2};
    \end{tikzpicture}\Big)
    ~: f\mapsto Y_1\cdot f \\
    \label{Polklrw2}
    \cP\Big(\begin{tikzpicture}
        [baseline=.3]
         \draw (0,0) \botlabel{i} \braidup (.6,.6);
    \draw (.6,0) \botlabel{j} \braidup (0,.6);
    \end{tikzpicture}\Big)
    ~: f\mapsto \begin{cases}
        X_{ij}(Y_{2},Y_1)f^{s_1}, & i\neq j, \\[1ex]
        \dfrac{f^{s_1}-f}{Y_{2}-Y_{1}}, & i=j.
    \end{cases}
\end{gather}
where 
\begin{align}
\label{weylact}
f^{s_j}(Y_1,\ldots,Y_n) =
\begin{cases} 
 f(\cdots,Y_{j+1},Y_{j},\cdots),&\text{ if } j>0; \\
 f(-Y_1,Y_2,\cdots), &\text{ if } j=0.
\end{cases} 
\end{align}
In particular, restricting $\cP$ to $\klrw^{\lamseq}$ defines a polynomial representation of $\klrw^{\lamseq}$ on \[\Pol^\lamseq:=\oplus_{(\bi,\redkappa)\in \obj{\klrw}^{\lamseq}}\Pol(\bi,\redkappa).\]

\subsection{oKLRW algebras}
\label{sec:oKLRW}
We are now ready to introduce the oKLRW category and the oKLRW algebra. Recall the iquiver Hecke category $\cRB$ from \cref{defcRB} as a module category over $\cR$. 
\begin{definition}
    \label{defoKLRWC}
    The oKLRW category associated with a dominant weight $\mu$, denoted as $\oklrwmu$, is the $\klrw$-module generated by the morphisms
     \begin{gather*}
        \begin{tikzpicture}
            [anchorbase]
            \mirror{-.5}{0}{1};
            \draw (0,0) \botlabel{i} to[out=up,in=right] (-.5,.5) to[out=right,in=down] (0,1)\toplabel{\tau i};
        \end{tikzpicture} \colon i \to \tau i,\quad \text{ for all } i\in Q_0,
    \end{gather*}
    subject to the relations \cref{quadratic,reflection} (which depend on $\mu$).
\end{definition}
By \cref{defoKLRWC}, we observe that the red strands will never touch the mirror. Moreover, the category $\oklrwmu$ shares the same set of objects as $\klrw$. Let $\lamseq$ be a fixed sequence of dominant weights, recall $\obj{\klrw}^{\lamseq}$ from \cref{objklrwalg}. We define the oKLRW algebra to be
\begin{gather}
\label{eq:oklrwalg}
    \oklrwmu^{\lamseq}:=\bigoplus_{(\bi,\redkappa_1),(\bj,\redkappa_2)\in \obj{\klrw}^{\lamseq}} \Hom_{\oklrwmu}((\bi,\redkappa_1),(\bj,\redkappa_2)).    
\end{gather}
As in \cref{embed}, we can regard the KLRW algebra $\klrwalg$ as a natural subalgebra of the oKLRW algebra $\oklrwalg$. Recall the functor $\cP$ from \cref{Polobj,Polklrw1,Polklrw2}.
\begin{prop}
\label{prop:polyrep}
     Regarding $\cVect_\kk$ as a strict $\klrw$-module via $\cP$,  there is a unique strict morphism of $\klrw$-modules $\ctP:\oklrwmu\to \cVect_\kk$ such that
     \begin{gather}
         \label{Polreflect}
         \ctP \Big(~
            \begin{tikzpicture}
            [anchorbase]
            \mirror{-.5}{0}{1};
            \draw (0,0) \botlabel{i} to[out=up,in=right] (-.5,.5) to[out=right,in=down] (0,1)\toplabel{\tau i};
        \end{tikzpicture} 
        ~ \Big) : f \mapsto (-Y_1)^{\mu_i}f^{s_0}.
     \end{gather}
\end{prop}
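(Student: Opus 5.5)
Since $\oklrwmu$ is presented as a $\klrw$-module by generators and relations (in the sense of \cite[Sect.~2.2]{SSS25}), a strict morphism of $\klrw$-modules out of $\oklrwmu$ into the strict $\klrw$-module $\cVect_\kk$ is determined by the images of the generating morphisms. It therefore suffices to check that the proposed assignment \cref{Polreflect} respects the defining relations \cref{quadratic,reflection}, where all strands other than the ones drawn are treated as spectators. Uniqueness is then immediate from the universal property of the presentation.

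Concretely, on an object $(\bi,\redkappa)$ with $i_1=i$, the reflection sends $f\in\Pol(\bi,\redkappa)$ to $(-Y_1)^{\mu_i}f^{s_0}$. Because the module structure is a right action and the mirror sits on the left, the generator only affects $Y_1$, and the spectators to its right (black or red) are untouched. This is consistent with the fact that red strands never meet the mirror.

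I would verify the relations in the following order.

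First, the quadratic relation. Applying the reflection twice, first at label $i$ and then at $\tau i$, gives
\[
f\mapsto (-Y_1)^{\mu_{\tau i}}\bigl((-Y_1)^{\mu_i}f^{s_0}\bigr)^{s_0}=(-Y_1)^{\mu_{\tau i}}Y_1^{\mu_i}f .
\]
This must equal $f^\mu_i(-Y_1)f=Y_1^{\mu_i}(-Y_1)^{\mu_{\tau i}}f$ by \cref{fi}, and it does.

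Second, the dot-sliding relation. Its right-hand side reads $f\mapsto (-Y_1)^{\mu_i}(Y_1f)^{s_0}=-Y_1(-Y_1)^{\mu_i}f^{s_0}$. Its left-hand side reads $Y_1\cdot(-Y_1)^{\mu_i}f^{s_0}$. So the dot anticommutes with the reflection, as the relation requires.

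Third, and this is the main obstacle, the braid-type relation \cref{reflection} on two strands $i,j$. Each side is a composite of reflection operators at position $1$, a crossing of strands $1,2$ given by \cref{Polklrw2}, and a second reflection. I will write both sides as operators of the form $g\cdot f^{w}$, with $w$ in the type $B_2$ Weyl group generated by $s_0,s_1$, plus divided-difference terms when labels coincide.

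The generic case is $j\neq\tau i$. Here the labels on the crossings are never equal except possibly when $i=j$, and then $\tau i\neq i$ because $\tau$ has no fixed points. I expect the two composites to be monomial-times-$f^{s_0s_1s_0}$ (or $f^{s_1s_0s_1}$-type) operators whose coefficients $X_{ij}(\pm Y,\pm Y)$ and $(-Y)^{\mu}$ agree. The agreement should use the arrow compatibility $s(\tau h)=\tau t(h)$, which gives $X_{\tau j,\tau i}(u,v)=X_{ij}(-v,-u)$ up to the sign conventions. If $i=j$, the divided differences intertwine with $s_0$ and the terms cancel.

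In the case $j=\tau i$, the left crossing has labels $(i,\tau i)$ and the subsequent one $(\tau i, i)$, or similar. Their difference produces $X$-factors whose difference should collapse to the divided difference $\frac{f^\mu_i(Y_1)-f^\mu_j(Y_2)}{Y_1+Y_2}$ applied after a crossing. Here the condition that $\tau h=h$ if and only if $\tau s(h)=t(h)$ will be used to control $X_{i,\tau i}$, which is either $1$ or $u+v$ up to sign.

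I will do this computation carefully by evaluating both sides on $f$ and using $s_0s_1s_0s_1=s_1s_0s_1s_0$. The expectation is that the paper's sign convention in \cref{reflection}, chosen to match \cref{Polklrw1,Polklrw2}, makes the signs come out exactly.
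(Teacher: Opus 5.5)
Your proposal is correct and takes the same route as the paper, which likewise reduces the statement to checking \cref{quadratic,reflection} for the reflection generator (deferring details to \cite[Prop.~2.7]{Prz23}); your explicit checks of the two relations in \cref{quadratic} are right. The braid-type relation also works out as you anticipate, with two corrections to your sketch: both composites have leading term $f(-Y_1,-Y_2)$, i.e.\ the longest element $s_0s_1s_0s_1$ rather than $s_0s_1s_0$; and for $j=\tau i$ what survives is the common term $X_{i,\tau i}(Y_2,Y_1)f^{s_1}$ times $\frac{f^\mu_i(Y_1)-f^\mu_{\tau i}(Y_2)}{Y_1+Y_2}$, whose numerator comes from the framing monomials of the reflections, not from a difference of $X$-factors.
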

\begin{proof}
    By our definition, $\ctP$ agrees with $\cP$ when restricting to $\klrw$. Thus it suffices to show that \cref{Polreflect} satisfies \cref{quadratic,reflection}, which is straightforward to check; see also \cite[Prop 2.7]{Prz23}.
\end{proof}
To summarize, we now have the following diagram:
\begin{equation}
\label{diagram}
\begin{tikzcd}
&\text{Quiver Hecke category } \cR \arrow[rd] \arrow[ld]& \\
\text{iQuiver Hecke category } \cRB \arrow[rd,dashed] & & \text{KLRW category } \klrw  \arrow[ld] \\
& \text{oKLRW category } \oklrwmu&
\end{tikzcd}
\end{equation}
To interpret the dashed arrow in \cref{diagram}, we set $\mu=\bzero$ and consider the full subcategory
\[
{}_{\bzero}\cS_\lambda \subset \oklrwzero
\]
whose objects are those containing exactly one red strand labelled by
\(
\lamseq = (\lambda).
\)
 By definition, ${}_\bzero\cS_\lambda$ is naturally a $\cR$-module.
\begin{theorem}
\label{framezero}
    For any framing weight $\lambda$, 
    there exists an embedding of $\cR$-modules  $\cF\colon\cRB\hookrightarrow {}_\bzero\cS_\lambda$ which sends any object $\bi\in \cRB$ to $(\bi,\redkappa,\rlam)$ with $\redkappa(1)=0$ and
    \begin{gather}
        \begin{tikzpicture}
            [anchorbase]
            \mirror{-.5}{0}{1};
            \draw (0,0) \botlabel{i} to[out=up,in=right] (-.5,.5) to[out=right,in=down] (0,1)\toplabel{\tau i};
        \end{tikzpicture} ~\mapsto~~\begin{tikzpicture}
            [anchorbase]
            \mirror{-.5}{0}{1};
            \draw (0,0) \botlabel{i} to[out=up,in=right] (-.5,.5) to[out=right,in=down] (0,1)\toplabel{\tau i};
            \draw[red] (-.25,0)\botlabel{\mu} \braidup (.1,.5) \braidup (-.25,1);
        \end{tikzpicture} 
    \end{gather}
\end{theorem}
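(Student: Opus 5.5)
The plan is to define $\cF$ on generators, check the defining relations of $\cRB$ inside ${}_\bzero\cS_\lambda$, and deduce injectivity from polynomial representations. Throughout I read the framing weight of $\cRB$ as $\mu=\lambda$, the label of the red strand.

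\textbf{Defining $\cF$.} Since ${}_\bzero\cS_\lambda$ is an $\cR$-module by placing black KLR diagrams to the right of the red strand, I will let $\cF$ be the identity on the $\cR$-action. On objects, $\bi\mapsto(\bi,\redkappa,\rlam)$ with $\redkappa(1)=0$, so the red strand sits between the mirror and all black strands. On the generating reflection $i\to\tau i$, $\cF$ is the composite of three pieces: the red/black crossing $\rlam\, i\to i\,\rlam$, then the $\mu=\bzero$ reflection of the black strand at the mirror, then the crossing $\tau i\,\rlam\to\rlam\,\tau i$. Because $\cRB$ is presented as an $\cR$-module, the data of $\cF$ (strict, with $\omega=\id$) is determined once the relations \cref{quadratic} and \cref{reflection} are shown to hold for these images in $\oklrwzero$.

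\textbf{Checking the relations.}
\begin{enumerate}
\item \emph{Dot relation in \cref{quadratic}.} I will slide the dot through the red crossing using \cref{dotmovered}, apply the dot relation for the $\mu=\bzero$ mirror, and then slide the dot back.
\item \emph{Quadratic relation in \cref{quadratic}.} The doubled image contains a red/black bigon for the strand labelled $i$ and one for $\tau i$, separated by two $\bzero$-framed reflections. Resolving each bigon with \cref{reddoublecross} produces $y^{\lambda_i}$ on one side and $y^{\lambda_{\tau i}}$ on the other. Moving the dots through the mirror with the $\bzero$-dot relation introduces signs, and the $\bzero$-quadratic relation is the identity. The result should be $y^{\lambda_i}(-y)^{\lambda_{\tau i}}=f^\lambda_i(-y)$, up to the sign conventions, which I will verify.
\item \emph{Relation \cref{reflection}.} This is the main obstacle. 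In both diagrams on the left-hand side, the red strand must be isotoped past the crossing of the two black strands using \cref{redbraid1,redbraid2}. After the reflection, the labels are $\tau i$ and $j$, and \cref{redbraid1} produces a correction term exactly when $\tau i=j$. The plan is to show three things.
\begin{itemize}
\item The non-correction terms reduce, via the $\bzero$-framed relation \cref{reflection}, to a term involving $\frac{f^\bzero_i(y_1)-f^\bzero_j(y_2)}{y_1+y_2}=0$ (since $f^\bzero_i=f^\bzero_j=1$).
\item The correction terms $\sum_{a+b+1}y_1^ay_2^b$, after the dots pass through the mirror (with the sign change), assemble into $\frac{f^\lambda_i(y_1)-f^\lambda_j(y_2)}{y_1+y_2}$.
\item The sign bookkeeping is consistent. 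As a check, I will compare both sides in the polynomial representation $\ctP$ of \cref{prop:polyrep}; the diagrammatic computation itself carries the proof.
\end{itemize}
\end{enumerate}

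\textbf{Injectivity.} Compose $\cF$ with $\ctP$ for $\mu=\bzero$. Using \cref{Polklrw1}, the image of the reflection acts by
\[
f\;\mapsto\; Y_1^{\lambda_i}f \;\mapsto\; (-Y_1)^{\lambda_i}f^{s_0} \;\mapsto\; (-Y_1)^{\lambda_i}f^{s_0}.
\]
This is precisely the formula \cref{Polreflect} for $\cRB$ with framing $\lambda$. The black KLR generators act identically on both sides. Hence $\ctP\circ\cF$ is the oKLR polynomial representation of $\cRB$, which is faithful by \cite[Prop.~2.7]{Prz23} (see also \cite{VV11}), after adjusting the sign convention as in the remark following \cref{defcRB}. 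Faithfulness of $\ctP\circ\cF$ forces $\cF$ to be faithful on morphism spaces. It is injective on objects by construction, so it is an embedding of $\cR$-modules.
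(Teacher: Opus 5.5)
Your proposal is correct and follows essentially the same route as the paper: faithfulness of $\cF$ comes from $\ctP\circ\cF$ coinciding, up to the sign convention, with Prze\'zdziecki's faithful polynomial representation. The relations \cref{quadratic,reflection} are then checked diagrammatically; when $i=\tau j$, the main terms cancel by the $\bzero$-framed relation and the \cref{redbraid1} correction terms produce the framed difference quotient. One detail for when you carry this out: the correction terms yield $f^\lambda_i$ and $f^\lambda_j$ only after you also resolve the remaining red/black bigons with \cref{reddoublecross}, which contribute the extra factors $y_1^{\lambda_i}$ (resp.\ $y_2^{\lambda_j}$) on top of the mirror signs $(-y_1)^a$ (resp.\ $(-y_2)^c$); moreover, the resulting quotient $\frac{f^\lambda_i(y_2)-f^\lambda_j(y_1)}{y_1+y_2}$ sits below the crossing, so a final use of \cref{dotcross} (with $i\neq j$) is needed to reach the form in \cref{reflection}.
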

\begin{proof}
    We observe that $\ctP\circ\cF$ defines a faithful polynomial representation of $\cRB$ which agrees with \cite[Prop. 2.7]{Prz23} (up to a sign), hence $\cF$ is faithful as well. Then it suffices to check that \cref{quadratic,reflection} are preserved under $\cF$. For the first relation in \cref{quadratic}, in ${}_\bzero\cS_\lambda$ we have
    \begin{gather*}
        \begin{tikzpicture}
            [anchorbase]
            \mirror{-.5}{0}{1};
             \draw (0,0) \botlabel{i} to[out=up,in=right] (-.5,.25) to[out=right,in=down] (0,.5) to [out=up,in=right] (-.5,.75) to[out=right,in=down] (0,1)\toplabel{i};
             \draw[red] (-.25,0) \botlabel{\mu} \braidup (0,.25) \braidup (-.25,.5) \braidup (0,.75)\braidup (-.25,1);
        \end{tikzpicture}
        \overset{\cref{reddoublecross}}{=}
         \begin{tikzpicture}
            [anchorbase]
            \mirror{-.5}{0}{1};
             \draw (0,0) \botlabel{i} to[out=up,in=right] (-.5,.25) to[out=right,in=down] (-.25,.5) to [out=up,in=right] (-.5,.75) to[out=right,in=down] (0,1)\toplabel{i};
             \draw[red] (-.25,0) \botlabel{\mu} \braidup (.2,.5) \braidup (-.25,1);
              \bdot{-.25,.5}; \node at (0,.5){$\scriptstyle \rmu_{\tau i}$};
        \end{tikzpicture}
        \underset{\cref{dotmovered}}{\overset{\cref{quadratic}}{=}}(-1)^{\rmu_{\tau i}}
        \begin{tikzpicture}
            [anchorbase]
            \mirror{-.5}{0}{1};
             \draw (0,0) \botlabel{i} to[out=up,in=right] (-.5,.25) to[out=right,in=down] (-.25,.5) to [out=up,in=right] (-.5,.75) to[out=right,in=down] (0,1)\toplabel{i};
             \draw[red] (-.25,0) \botlabel{\mu} \braidup (0,.5) \braidup (-.25,1);
              \bdot{-.02,.9}; \node at (.2,.8){$\scriptstyle \rmu_{\tau i}$};
        \end{tikzpicture}
        \overset{\cref{quadratic}}{=}(-1)^{\rmu_{\tau i}}
        \begin{tikzpicture}
            [anchorbase]
            \mirror{-.5}{0}{1};
             \draw (0,0) \botlabel{i} \braidup (-.25,.5)\braidup (0,1)\toplabel{i};
             \draw[red] (-.25,0) \botlabel{\mu} \braidup (0,.5) \braidup (-.25,1);
              \bdot{-.02,.9}; \node at (.2,.8){$\scriptstyle \rmu_{\tau i}$};
        \end{tikzpicture}
        \overset{\cref{reddoublecross}}{=}(-1)^{\rmu_{\tau i}}
         \begin{tikzpicture}
            [anchorbase]
            \mirror{-.5}{0}{1};
             \draw (0,0) \botlabel{i} \braidup  (0,1)\toplabel{i};
             \draw[red] (-.25,0) \botlabel{\mu} \braidup (-.25,1);
              \bdot{0,.7}; \node at (.3,.7){$\scriptstyle \rmu_{\tau i}$};
               \bdot{0,.3}; \node at (.3,.3){$\scriptstyle \rmu_{i}$};
        \end{tikzpicture}
        =   \begin{tikzpicture}
            [anchorbase]
            \mirror{-.25}{0}{1};
            \draw[red] (-.1,0)\botlabel{\mu} -- (-.1,1);
             \draw (.6,0) \botlabel{i} -- (.6,1)\toplabel{i};
             \coupon{.6,.5}{f^{\rmu}_{i}(-y_1)};
        \end{tikzpicture}.
    \end{gather*}
Thus, $\cF$ preserves the first relation in \cref{quadratic}.
For the second relation in \cref{quadratic}, \cref{dotmovered} implies that it is preserved by $\cF$ as well.

For \cref{reflection}, first we suppose that $i\neq \tau j$, and hence $\tau i\neq j$. Then we have 
\begin{gather*}
        \begin{tikzpicture}
        [anchorbase]
        \mirror{-.25}{0}{1};
       \draw (0,0) \botlabel{i} to[out=up,in=-15] (-.25,.2) to [out=15,in=down] (.3,.6) \braidup (0,1) \toplabel{\tau i};
        \draw (.3,0) \botlabel{j} to[out=up,in=right] (-.25,.5)  to [out=right,in=down] (.3,1) \toplabel{\tau j};
        \draw[red] (-.125,0)\botlabel{\mu} \braidup (0,.3) \braidup (-.125,1);
    \end{tikzpicture}
    \overset{\cref{redbraid1}}{=}
     \begin{tikzpicture}
        [anchorbase]
        \mirror{-.25}{0}{1};
       \draw (0,0) \botlabel{i} to[out=up,in=-15] (-.25,.2) to [out=15,in=down] (.3,.6) \braidup (0,1) \toplabel{\tau i};
        \draw (.3,0) \botlabel{j} to[out=up,in=right] (-.25,.5)  to [out=right,in=down] (.3,1) \toplabel{\tau j};
        \draw[red] (-.125,0)\botlabel{\mu} \braidup (.5,.3) \braidup (-.125,1);
    \end{tikzpicture}
    \overset{\cref{redbraid2}}{=}
     \begin{tikzpicture}
        [anchorbase]
        \mirror{-.25}{0}{1};
       \draw (0,0) \botlabel{i} to[out=up,in=-15] (-.25,.2) to [out=15,in=down] (.3,.6) \braidup (0,.8) \braidup (.2,1) \toplabel{\tau i};
        \draw (.3,0) \botlabel{j} to[out=up,in=right] (-.25,.5)  to [out=right,in=down] (.5,1) \toplabel{\tau j};
        \draw[red] (-.125,0)\botlabel{\mu} \braidup (.5,.5) \braidup (0,1);
    \end{tikzpicture}
    \overset{\cref{reflection}}{=}
     \begin{tikzpicture}
        [anchorbase]
        \mirror{-.25}{0}{1};
      \draw (0,0) \botlabel{i} \braidup (-.125,.2) \braidup (.3,.4) to [out=up,in=-15] (-.25,.8) to [out=15,in=down] (.2,1) \toplabel{\tau i};
        \draw (.3,0) \botlabel{j} to[out=up,in=right] (-.25,.5)  to [out=right,in=down] (.5,1) \toplabel{\tau j};
        \draw[red] (-.125,0)\botlabel{\mu} \braidup (.5,.5) \braidup (0,1);
    \end{tikzpicture}
    \underset{\cref{redbraid2}}{\overset{\cref{redbraid1}}{=}}
    \begin{tikzpicture}
        [anchorbase]
        \mirror{-.25}{0}{1};
        \draw (0,0) \botlabel{i} \braidup (.3,.4) to [out=up,in=-15] (-.25,.8) to [out=15,in=down] (0,1) \toplabel{\tau i};
        \draw (.3,0) \botlabel{j} to[out=up,in=right] (-.25,.5)  to [out=right,in=down] (.5,1) \toplabel{\tau j};
         \draw[red] (-.125,0)\botlabel{\mu} \braidup (0,.3) \braidup (-.125,1);
    \end{tikzpicture}.
\end{gather*}
Now suppose that $i=\tau j$, hence $i\neq j$. Then in ${}_\bzero\cS_\lambda\subset \oklrwzero$ we have
\begin{gather}
\label{steak1}
      \begin{tikzpicture}
        [anchorbase]
        \mirror{-.25}{0}{1};
       \draw (0,0) \botlabel{i} to[out=up,in=-15] (-.25,.2) to [out=15,in=down] (.3,.6) \braidup (0,1) \toplabel{j};
        \draw (.3,0) \botlabel{j} to[out=up,in=right] (-.25,.5)  to [out=right,in=down] (.3,1) \toplabel{i};
        \draw[red] (-.125,0)\botlabel{\mu} \braidup (0,.3) \braidup (-.125,1);
    \end{tikzpicture}
    \underset{\cref{redbraid2}}{\overset{\cref{redbraid1}}{=}}
    \begin{tikzpicture}
        [anchorbase]
        \mirror{-.25}{0}{1};
       \draw (0,0) \botlabel{i} to[out=up,in=-15] (-.25,.2) to [out=15,in=down] (.3,.6) \braidup (0,.8) \braidup (.2,1) \toplabel{j};
        \draw (.3,0) \botlabel{j} to[out=up,in=right] (-.25,.5)  to [out=right,in=down] (.5,1) \toplabel{i};
        \draw[red] (-.125,0)\botlabel{\mu} \braidup (.5,.5) \braidup (0,1);
    \end{tikzpicture}
    ~+\sum_{a+b+1=\rmu_j}~
    \begin{tikzpicture}
        [anchorbase]
        \mirror{-.5}{0}{1};
        \draw (-.1,0) \botlabel{i} to[out=up,in=-15] (-.5,.3) to [out=15,in=down] (-.3,.5) to[out=up,in=-15] (-.5,.7) to [out=15,in=down] (.1,1) \toplabel{i};
        \draw (.1,0) \botlabel{j} \braidup (-.1,1) \toplabel{j};
        \draw[red] (-.3,0)\botlabel{\mu} \braidup (-.1,.5) \braidup (-.3,1);
        \bdot{-.3,.5}; \node at (-.18,.5){$\scriptstyle a$};
        \bdot{.08,.2}; \node at (.2,.2){$\scriptstyle b$};
    \end{tikzpicture}
    \underset{\cref{dotmovered}}{\overset{\cref{quadratic},\cref{reddoublecross}}{=}}
     \begin{tikzpicture}
        [anchorbase]
        \mirror{-.25}{0}{1};
       \draw (0,0) \botlabel{i} to[out=up,in=-15] (-.25,.2) to [out=15,in=down] (.3,.6) \braidup (0,.8) \braidup (.2,1) \toplabel{j};
        \draw (.3,0) \botlabel{j} to[out=up,in=right] (-.25,.5)  to [out=right,in=down] (.5,1) \toplabel{i};
        \draw[red] (-.125,0)\botlabel{\mu} \braidup (.5,.5) \braidup (0,1);
    \end{tikzpicture}
    ~+\sum_{a+b+1=\rmu_j}(-1)^a~
    \begin{tikzpicture}
        [anchorbase]
        \mirror{-1}{0}{1};
        \draw (.1,0) \botlabel{i} \braidup (.5,1) \toplabel{i};
        \draw (.5,0) \botlabel{j} \braidup (.1,1) \toplabel{j};
        \draw[red] (-.8,0)\botlabel{\mu} \braidup (-.8,1);
        \bdot{.48,.2}; \node at (.6,.2){$\scriptstyle b$};
        \bdot{.12,.2}; \node at (-.3,.38){$\scriptstyle \rmu_i+a$};
    \end{tikzpicture}
\\
\label{steak2}
    \begin{tikzpicture}
        [anchorbase]
        \mirror{-.25}{0}{1};
        \draw (0,0) \botlabel{i} \braidup (.3,.4) to [out=up,in=-15] (-.25,.8) to [out=15,in=down] (0,1) \toplabel{j};
        \draw (.3,0) \botlabel{j} to[out=up,in=right] (-.25,.5)  to [out=right,in=down] (.5,1) \toplabel{i};
         \draw[red] (-.125,0)\botlabel{\mu} \braidup (0,.3) \braidup (-.125,1);
    \end{tikzpicture}
    \underset{\cref{redbraid2}}{\overset{\cref{redbraid1}}{=}}
     \begin{tikzpicture}
        [anchorbase]
        \mirror{-.25}{0}{1};
      \draw (0,0) \botlabel{i} \braidup (-.125,.2) \braidup (.3,.4) to [out=up,in=-15] (-.25,.8) to [out=15,in=down] (.2,1) \toplabel{j};
        \draw (.3,0) \botlabel{j} to[out=up,in=right] (-.25,.5)  to [out=right,in=down] (.5,1) \toplabel{i};
        \draw[red] (-.125,0)\botlabel{\mu} \braidup (.5,.5) \braidup (0,1);
    \end{tikzpicture}
    ~+\sum_{c+d+1=\rmu_i}~
     \begin{tikzpicture}
        [anchorbase]
        \mirror{-.5}{0}{1};
        \draw (.1,0) \botlabel{j} to[out=up,in=-15] (-.5,.3) to [out=15,in=down] (-.3,.5) to[out=up,in=-15] (-.5,.7) to [out=15,in=down] (-.1,1) \toplabel{j};
        \draw (-.1,0) \botlabel{i} \braidup (.1,1) \toplabel{i};
        \draw[red] (-.3,0)\botlabel{\mu} \braidup (-.1,.5) \braidup (-.3,1);
        \bdot{-.3,.5}; \node at (-.18,.5){$\scriptstyle c$};
        \bdot{0,.5}; \node at (.2,.5){$\scriptstyle d$};
    \end{tikzpicture}
    \underset{\cref{dotmovered},\cref{reddoublecross}}{\overset{\cref{dotcross},\cref{quadratic}}{=}}
    \begin{tikzpicture}
        [anchorbase]
        \mirror{-.25}{0}{1};
      \draw (0,0) \botlabel{i} \braidup (-.125,.2) \braidup (.3,.4) to [out=up,in=-15] (-.25,.8) to [out=15,in=down] (.2,1) \toplabel{j};
        \draw (.3,0) \botlabel{j} to[out=up,in=right] (-.25,.5)  to [out=right,in=down] (.5,1) \toplabel{i};
        \draw[red] (-.125,0)\botlabel{\mu} \braidup (.5,.5) \braidup (0,1);
    \end{tikzpicture}
    ~+\sum_{c+d+1=\rmu_i}(-1)^c~
    \begin{tikzpicture}
        [anchorbase]
        \mirror{-.3}{0}{1};
        \draw (.1,0) \botlabel{i} \braidup (.5,1) \toplabel{i};
        \draw (.5,0) \botlabel{j} \braidup (.1,1) \toplabel{j};
        \draw[red] (-.2,0)\botlabel{\mu} \braidup (-.2,1);
        \bdot{.48,.2}; \node at (.9,.2){$\scriptstyle \rmu_j+c$};
        \bdot{.12,.2}; \node at (-.05,.3){$\scriptstyle d$};
    \end{tikzpicture}
\end{gather}
Notice that
\begin{gather} 
    \label{steak3}
    \sum_{a+b+1=\rmu_j}(-1)^a~
    \begin{tikzpicture}
        [anchorbase]
        \mirror{-1}{0}{1};
        \draw (.1,0) \botlabel{i} \braidup (.5,1) \toplabel{i};
        \draw (.5,0) \botlabel{j} \braidup (.1,1) \toplabel{j};
        \draw[red] (-.8,0)\botlabel{\mu} \braidup (-.8,1);
        \bdot{.48,.2}; \node at (.6,.2){$\scriptstyle b$};
        \bdot{.12,.2}; \node at (-.3,.38){$\scriptstyle \rmu_i+a$};
    \end{tikzpicture}
    = 
    \begin{tikzpicture}
            [anchorbase]
            \mirror{-.8}{0}{1.2};
            \draw[red] (-.7,0)\botlabel{\mu} -- (-.7,1.2);
            \draw (.4,0)\botlabel{i} \braidup (.4,.8) \braidup (.8,1.2) \toplabel{i};
            \draw (.8,0)\botlabel{j} \braidup (.8,.8) \braidup (.4,1.2) \toplabel{j};
            \coupon{.6,.5}{\frac{y_2^{\rmu_j}-(-y_1)^{\rmu_j}}{y_1+y_2}y_1^{\rmu_i}};
        \end{tikzpicture},\quad
    \sum_{c+d+1=\rmu_i}(-1)^c~
    \begin{tikzpicture}
        [anchorbase]
        \mirror{-.3}{0}{1};
        \draw (.1,0) \botlabel{i} \braidup (.5,1) \toplabel{i};
        \draw (.5,0) \botlabel{j} \braidup (.1,1) \toplabel{j};
        \draw[red] (-.2,0)\botlabel{\mu} \braidup (-.2,1);
        \bdot{.48,.2}; \node at (.9,.2){$\scriptstyle \rmu_j+c$};
        \bdot{.12,.2}; \node at (-.05,.3){$\scriptstyle d$};
    \end{tikzpicture}
    =
     \begin{tikzpicture}
            [anchorbase]
            \mirror{-.8}{0}{1.2};
            \draw[red] (-.7,0)\botlabel{\mu} -- (-.7,1.2);
            \draw (.4,0)\botlabel{i} \braidup (.4,.8) \braidup (.8,1.2) \toplabel{i};
            \draw (.8,0)\botlabel{j} \braidup (.8,.8) \braidup (.4,1.2) \toplabel{j};
            \coupon{.6,.5}{\frac{y_1^{\rmu_i}-(-y_2)^{\rmu_i}}{y_1+y_2}y_2^{\rmu_j}};
        \end{tikzpicture}
\end{gather}
Since  $ \begin{tikzpicture}
        [anchorbase]
        \mirror{-.25}{0}{1};
       \draw (0,0) \botlabel{i} to[out=up,in=-15] (-.25,.2) to [out=15,in=down] (.3,.6) \braidup (0,.8) \braidup (.2,1) \toplabel{j};
        \draw (.3,0) \botlabel{j} to[out=up,in=right] (-.25,.5)  to [out=right,in=down] (.5,1) \toplabel{i};
        \draw[red] (-.125,0)\botlabel{\mu} \braidup (.5,.5) \braidup (0,1);
    \end{tikzpicture}
    =
    \begin{tikzpicture}
        [anchorbase]
        \mirror{-.25}{0}{1};
      \draw (0,0) \botlabel{i} \braidup (-.125,.2) \braidup (.3,.4) to [out=up,in=-15] (-.25,.8) to [out=15,in=down] (.2,1) \toplabel{j};
        \draw (.3,0) \botlabel{j} to[out=up,in=right] (-.25,.5)  to [out=right,in=down] (.5,1) \toplabel{i};
        \draw[red] (-.125,0)\botlabel{\mu} \braidup (.5,.5) \braidup (0,1);
    \end{tikzpicture}$ in ${}_\bzero\cS_\lambda$,
by taking difference of \cref{steak1,steak2}, we obtain that
\begin{gather*}
     \begin{tikzpicture}
        [anchorbase]
        \mirror{-.25}{0}{1};
       \draw (0,0) \botlabel{i} to[out=up,in=-15] (-.25,.2) to [out=15,in=down] (.3,.6) \braidup (0,1) \toplabel{j};
        \draw (.3,0) \botlabel{j} to[out=up,in=right] (-.25,.5)  to [out=right,in=down] (.3,1) \toplabel{i};
        \draw[red] (-.125,0)\botlabel{\mu} \braidup (0,.3) \braidup (-.125,1);
    \end{tikzpicture}
    -
     \begin{tikzpicture}
        [anchorbase]
        \mirror{-.25}{0}{1};
        \draw (0,0) \botlabel{i} \braidup (.3,.4) to [out=up,in=-15] (-.25,.8) to [out=15,in=down] (0,1) \toplabel{j};
        \draw (.3,0) \botlabel{j} to[out=up,in=right] (-.25,.5)  to [out=right,in=down] (.5,1) \toplabel{i};
         \draw[red] (-.125,0)\botlabel{\mu} \braidup (0,.3) \braidup (-.125,1);
    \end{tikzpicture}
    \overset{\cref{steak3}}{=}
    \begin{tikzpicture}
            [anchorbase]
            \mirror{-.8}{0}{1.2};
            \draw[red] (-.65,0)\botlabel{\mu} -- (-.65,1.2);
            \draw (.4,0)\botlabel{i} \braidup (.4,.8) \braidup (.8,1.2) \toplabel{i};
            \draw (.8,0)\botlabel{j} \braidup (.8,.8) \braidup (.4,1.2) \toplabel{j};
            \coupon{.6,.5}{\frac{f^{\rmu}_i(y_2)-f^{\rmu}_j(y_1)}{y_1+y_2}};
        \end{tikzpicture}
        \overset{\cref{dotcross}}{=}
        \begin{tikzpicture}
            [anchorbase]
            \mirror{-.8}{0}{1.2};
            \draw[red] (-.65,0)\botlabel{\mu} -- (-.65,1.2);
            \draw (.4,0)\botlabel{i} \braidup (.8,.4) -- (.8,1.2) \toplabel{i};
            \draw (.8,0)\botlabel{j} \braidup (.4,.4) -- (.4,1.2) \toplabel{j};
            \coupon{.6,.7}{\frac{f^{\rmu}_i(y_1)-f^{\rmu}_j(y_2)}{y_1+y_2}};
        \end{tikzpicture}
\end{gather*}
This finishes the proof.
\end{proof}
\cref{framezero} shows that, for any framing weight $\lambda$, we can always regard $\cRB$ as a subcategory of $\oklrwzero$ by adding a red strand labeled $\lambda$ to the left end of each generating morphism. The exact same proof shows the following:
\begin{corollary}
\label{embedzeroklrw}
    For any framing weight $\mu$, there exists an embedding of $\klrw$-modules
\(
\tilde{\cF}\colon \oklrwmu \hookrightarrow \oklrwzero,
\)
which sends an object $(\bi,\redkappa,\lamseq)$ to
\(
(\bi,\redkappa',\, \mu \cup \lamseq),
\)
where \(\redkappa'(1)=0, 
\redkappa'(k+1)=\redkappa(k)\ \text{ for all } k\ge 1,
\)
and such that
    \begin{gather*}
        \begin{tikzpicture}
            [anchorbase]
            \mirror{-.5}{0}{1};
            \draw (0,0) \botlabel{i} to[out=up,in=right] (-.5,.5) to[out=right,in=down] (0,1)\toplabel{\tau i};
        \end{tikzpicture} ~\mapsto~~\begin{tikzpicture}
            [anchorbase]
            \mirror{-.5}{0}{1};
            \draw (0,0) \botlabel{i} to[out=up,in=right] (-.5,.5) to[out=right,in=down] (0,1)\toplabel{\tau i};
            \draw[red] (-.25,0)\botlabel{\mu} \braidup (.1,.5) \braidup (-.25,1);
        \end{tikzpicture} 
    \end{gather*}
\end{corollary}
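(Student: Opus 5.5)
The plan follows the proof of \cref{framezero}. We define $\tilde{\cF}$ on objects as in the statement. On morphisms, $\tilde{\cF}$ is the identity on the $\klrw$-part, in the sense that a KLRW diagram is sent to itself with a vertical red strand labelled $\mu$ added at the far left next to the mirror. Each reflection generator goes to the reflection whose black strand crosses the new red $\mu$-strand twice, as displayed. Since $\oklrwmu$ is presented as a $\klrw$-module by the reflection generators subject to \cref{quadratic,reflection} (with framing $\mu$), $\tilde{\cF}$ is a well-defined strict morphism of $\klrw$-modules once two things hold. First, adding a vertical red strand at the far left commutes with the right $\klrw$-action. This is clear, because the action is horizontal concatenation on the right and the new strand sits at the far left. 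Second, the images of the generators satisfy \cref{quadratic,reflection} with $\mu$ replaced by $\bzero$ on the target side, producing the $\mu$-dependent right-hand sides.

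For the relations, the computations in the proof of \cref{framezero} carry over verbatim. The other red strands $\lamseq$ sit to the right of the mirror region and play no role, because all the manipulations are local near the mirror and the leftmost red strand. In detail, \cref{reddoublecross} converts the double crossings of the $\mu$-strand into $\mu_i$ and $\mu_{\tau i}$ dots. Then \cref{quadratic} in $\oklrwzero$, where $f^{\bzero}_i=1$, together with \cref{dotmovered} and the sign rule for dots passing through the reflection, turns the double reflection into $(-1)^{\mu_{\tau i}}y^{\mu_i+\mu_{\tau i}}=f^\mu_i(-y)$. For \cref{reflection}, we split into the cases $\tau i\neq j$ and $\tau i=j$. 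In the first case, the red braid moves \cref{redbraid1,redbraid2} have no correction terms. In the second case, the correction terms of \cref{redbraid1} combine through the telescoping identities \cref{steak3} into $\frac{f^\mu_i(y_1)-f^\mu_j(y_2)}{y_1+y_2}$, exactly as before.

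For injectivity we use faithfulness of polynomial representations. Under $\ctP$ for $\oklrwzero$, the image of a reflection generator acts on polynomials by $f\mapsto Y_1^{\mu_{i}}\cdot\bigl((\,\cdot\,)^{s_0}\bigr)\cdot(\pm)$: the red crossing multiplies by $Y_1^{\mu_i}$, then the reflection applies $s_0$, then the second red crossing contributes trivially. Accounting for the sign, this is precisely $(-Y_1)^{\mu_i}f^{s_0}$, the formula \cref{Polreflect} for $\oklrwmu$. Hence $\ctP\circ\tilde{\cF}=\ctP$ under the obvious identification $\Pol(\bi,\redkappa,\lamseq)=\Pol(\bi,\redkappa',\mu\cup\lamseq)$.

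The main obstacle is the faithfulness of $\ctP$ on $\oklrwmu$. The paper deduces this later, in \cref{cor:faithfulpolyrep}, via the basis theorem \cref{oklrwbasis}, whereas in \cref{framezero} it was imported from \cite{Prz23}. So either this step is ordered after those results, or we argue directly by a spanning-set count. For the direct argument, I would show that $\tilde{\cF}$ sends the spanning set of reflected crossing diagrams with dots in $\oklrwmu$ to a family that remains linearly independent in $\oklrwzero$. To do this, I would compare leading terms of the polynomial operators $w\cdot Y^{\mathbf a}$ in $\kk(Y)\rtimes W$, where $W$ is the type B Weyl group generated by $s_0,s_1,\dots$; these leading terms are visibly independent.
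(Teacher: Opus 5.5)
Your proposal is correct and is essentially the paper's argument. The paper proves this corollary by ``the exact same proof'' as \cref{framezero}: it checks that the images of the reflection generators satisfy \cref{quadratic,reflection} using \cref{reddoublecross,dotmovered,redbraid1,redbraid2,steak3}, and it deduces injectivity from $\ctP\circ\tilde{\cF}=\ctP$, where the correct bookkeeping is $(Y_1^{\mu_i}f)^{s_0}=(-Y_1)^{\mu_i}f^{s_0}$. You rightly note that this last step needs faithfulness of $\ctP$ on $\oklrwmu$, which the paper only proves later in \cref{cor:faithfulpolyrep} via \cref{oklrwbasis}. Since those results do not use this corollary, there is no circularity; the paper's one-line justification simply glosses over the ordering.
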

Therefore, by \cref{framezero,embedzeroklrw}, to study the oKLRW category associated with an arbitrary framing weight, it suffices to understand the case where the framing weight is $\bzero$.

\subsection{Diagonal type}
\label{sec:diagnoal}
Recall the notion of a quiver with an involution from \cref{sec:oKLR}. In this subsection, we assume that our quiver $\Qui=(Q_0,Q_1,s,t)$ is the disjoint union of two
 copies of quivers with identical vertices and opposite arrows, denoted by
\[
\Qui=\Qui_{+}\bigsqcup \Qui_{-}
\]
and let $\tau :\Qui\to\Qui$  be the involution that interchanges
the vertices in corresponding positions. For example, we may have the following quiver with involution
$$
\begin{tikzpicture}[
  scale=.8,
  every node/.style={circle, draw, inner sep=3pt},
  arrow/.style={red, <->, thick}
]

\node (t1) at (-1,1) {};   
\node (t2) at (0,0) {};   
\node (t3) at (-1,-1) {};  
\node (t4) at (1,0) {};
\node (t5) at (2,0) {};
\node (t6) at (3,0) {};
\node (t7) at (4,0) {};

\draw[->,ultra thick] (t1) -- (t2); 
\draw[->,ultra thick] (t2) -- (t3);
\draw[<-,ultra thick] (t2) -- (t4);
\draw[<-,ultra thick] (t4) -- (t5);
\draw[<-,ultra thick] (t5) -- (t6);
\draw[->,ultra thick] (t6) -- (t7);

\node (b1) at (-1,0) {};   
\node (b2) at (0,-1) {};   
\node (b3) at (-1,-2) {};   
\node (b4) at (1,-1) {};
\node (b5) at (2,-1) {};
\node (b6) at (3,-1) {};
\node (b7) at (4,-1) {};

\draw[<-,ultra thick] (b1) -- (b2);
\draw[<-,ultra thick] (b2) -- (b3);
\draw[->,ultra thick] (b2) -- (b4);
\draw[->,ultra thick] (b4) -- (b5);
\draw[->,ultra thick] (b5) -- (b6);
\draw[<-,ultra thick] (b6) -- (b7);

\draw[arrow] (t1) to[bend right=30] (b1);
\draw[arrow] (t2) to[bend left=20] (b2);
\draw[arrow] (t3) to[bend right=30] (b3);
\draw[arrow] (t4) to[bend left=15] (b4);
\draw[arrow] (t5) to[bend left=15] (b5);
\draw[arrow] (t6) to[bend left=15] (b6);
\draw[arrow] (t7) to[bend left=15] (b7);

\end{tikzpicture}$$
where each copy is a type D Dynkin quiver.

Let $\lamseq$ be a fixed sequence of dominant weights associated with $\Qui_{+}$. Recall the oKLRW algebra ${\oklrwzero}^{\lamseq}$ associated with $(\Qui,\tau)$ with framing weight $\bzero$ from \cref{oKLRalgebra} and the KLRW algebra ${\klrwalg}$ associated with $\Qui^{(+)}$ from \cref{klrwalg}.
\begin{theorem}
\label{thm:diagonal}
    The oKLRW algebra ${\oklrwzero}^{\lamseq}$ associated with $(\Qui,\tau)$ is Morita equivalent to the KLRW algebra ${\klrwalg}$ associated with $\Qui_{+}$. 
\end{theorem}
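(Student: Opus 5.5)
The plan is to exhibit a full idempotent $e\in {\oklrwzero}^{\lamseq}$ with $e\,{\oklrwzero}^{\lamseq}e\cong \klrwalg$, the KLRW algebra of $\Qui_+$. Morita equivalence then follows from the standard fact that $A$ and $eAe$ are Morita equivalent when $AeA=A$.

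\textbf{Choice of idempotent.} Recall that $\lamseq$ consists of dominant weights for $\Qui_+$, regarded as weights for $\Qui$ supported on $Q_0^+$. Let $e$ be the sum of the idempotents $1_{(\bi,\redkappa)}$ over those objects all of whose black labels lie in $Q_0^+$.

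\textbf{Fullness, $AeA=A$.} It suffices to show that every identity $1_{(\bi,\redkappa)}$ factors through an object with all labels in $Q_0^+$. Take a strand labelled $i\in Q_0^-$. Using crossings with red strands (relations \cref{reddoublecross}; a red label is zero on $Q_0^-$, so red--black double crossings involving this strand are identities) and crossings with black strands of labels $j\neq i$ in the other copy, on which $f_{ij}=1$, we move it to the far left. There we apply the mirror generator, changing its label to $\tau i\in Q_0^+$, and then move it back.

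The obstruction is black strands with labels in $Q_0^-$ that are equal or adjacent to $i$, since double crossings with them are not invertible. To handle this I would process the $Q_0^-$ strands from left to right. Each is commuted leftward only past strands already converted to $Q_0^+$, and those crossings are genuinely invertible: $\Qui_+$ and $\Qui_-$ are disconnected, and $i\neq \tau i$ because $\tau$ has no fixed vertices. The quadratic relation \cref{quadratic} with $\mu=\bzero$ gives $f^{\bzero}_i=1$, so the round trip through the mirror is the identity. Hence each such $1_{(\bi,\redkappa)}$ equals a composite passing through an object in $e$, which proves fullness.

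\textbf{Identifying $eAe$.} Next I would build a homomorphism $\klrwalg\to e{\oklrwzero}^{\lamseq}e$ sending generators to the same diagrams; the relations hold automatically. For an inverse, I would describe $e{\oklrwzero}^{\lamseq}e$ through the faithful polynomial representation \cref{Polreflect}, using the basis theorem stated in the introduction (\cref{oklrwbasis}, \cref{cor:faithfulpolyrep}). A basis diagram from $\Qui_+$-objects to $\Qui_+$-objects may contain mirror bounces. The strategy is to show each bounce can be removed modulo the KLRW part, by a "folding" argument in the spirit of the unfolding used for type B affine Hecke algebras.

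After a strand bounces it carries a $Q_0^-$ label, and such a strand interacts trivially with $Q_0^+$ strands: it meets only $f=1$ double crossings, and reflection relation \cref{reflection} has no correction term because $\tau i = j$ would force $i$ and $j$ to lie in different copies. A strand that reflects an even number of times can therefore be straightened using \cref{quadratic}. Since the endpoints lie in $Q_0^+$, each strand reflects an even number of times, and so every such diagram equals a KLRW diagram up to a sign coming from the dot-sliding relation.

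Equivalently, one can check on the polynomial representation that the images of both algebras coincide, by comparing graded dimensions of the bases. I expect this last step to be the main obstacle: controlling the signs $f\mapsto f^{s_0}$ and making the basis comparison precise. My first attempt would be the graded-dimension count, matching basis elements of $eAe$ (permutations of the type B Weyl group with $Q_0^+$-preserving labels) with KLRW basis elements via "fold all strands to $Q_0^+$".
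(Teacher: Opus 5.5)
Your route is the paper's: the same idempotent $e$, fullness by sliding each $Q_0^-$ strand to the mirror through strands whose double crossings are trivial and inserting a double reflection (the identity since $f^{\bzero}_i=1$), and identification of $e\,{\oklrwzero}^{\lamseq}e$ by detaching strands from the mirror. Your left-to-right processing simply makes the paper's induction on the number of $Q_0^-$ letters explicit.

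One justification is wrong, though. The correction term in \cref{reflection} does not vanish "because $\tau i=j$ would force $i$ and $j$ to lie in different copies". The factor $\delta_{\tau i,j}$ is nonzero exactly when $i$ and $j=\tau i$ lie in different copies, and that configuration does occur inside diagrams between $Q_0^+$-objects. For example, take a strand that has bounced once, now labelled $\tau k\in Q_0^-$, next to an unbounced strand labelled $k$. The term vanishes only because $\mu=\bzero$, so that $f^{\bzero}_i(y_1)-f^{\bzero}_j(y_2)=1-1=0$ for all $i,j$; this is what the paper uses.

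The ``main obstacle'' you anticipate is illusory, and your claim that basis diagrams between $Q_0^+$-objects may contain mirror bounces is false. In the basis of \cref{oklrwbasis}, every strand meets the mirror at most once (\cref{def:reduceddiagram}). A single meeting would turn a $Q_0^+$ label into a $Q_0^-$ label, since $w\bi$ acquires the letter $\tau(i_k)\in Q_0^-$ whenever $w(k)<0$. So between $Q_0^+$-objects only $w\in S_n$ occur, and the corresponding reduced diagrams are mirror-free. Hence $e\,{\oklrwzero}^{\lamseq}e$ has as basis exactly the mirror-free reduced Stendhal diagrams with bottom dots, which is the image of Webster's basis of $\klrwalg$ for $\Qui_+$. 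The map $\klrwalg\to e\,{\oklrwzero}^{\lamseq}e$ therefore sends a basis bijectively to a basis.

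Injectivity also follows more directly: $\ctP$ restricted to mirror-free diagrams on $Q_0^+$-objects is Webster's faithful polynomial representation for $\Qui_+$. No straightening of bounces, no folding, no sign bookkeeping for $f\mapsto f^{s_0}$, and no graded-dimension count is needed. This basis argument is also what actually justifies the injectivity that the paper takes for granted when it identifies the corner with $\klrwalg$ ``by removing the mirror''.
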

\begin{proof}
    Let $Q_{0,\pm}$ denote the set of indices of $\Qui_{\pm}$. By our definition, $\klrwalg$ is a span of Stendhal diagrams (\cite[Def. 4.1]{Web17a}) and elements of $\obj{\klrw}^{\lamseq}$ are of the form $(\bi,\redkappa)$ where $\bi$ are words in $Q_{0,+}$. Let \[
    e:=\sum_{(\bi,\redkappa)\in \obj{\klrw}^{\lamseq}} \id_{(\bi,\redkappa)} \in {\oklrwzero}^{\lamseq}
    \]
 where $\id_{(\bi,\redkappa)}$ is the identity morphism of $(\bi,\redkappa)$ within ${\oklrwzero}$. To show that ${\oklrwzero}^{\lamseq}$ is Morita equivalent to ${\klrwalg}$, it suffices to show 
 \[
 e({\oklrwzero}^{\lamseq})e\cong \klrwalg \quad\text{ and }\quad ({\oklrwzero}^{\lamseq} )e({\oklrwzero}^{\lamseq})={\oklrwzero}^{\lamseq}.
 \]
 Note that 
 \[
   e({\oklrwzero}^{\lamseq})e=\oplus_{(\bi,\redkappa_1),(\bj,\redkappa_2)\in \obj{\klrw}^{\lamseq}} \Hom_{\oklrwzero}((\bi,\redkappa_1),(\bj,\redkappa_2)).    
 \]
Since $\bi,\bj$ are both words in $Q_{0,+}$, we see that any black strand in those Stendhal diagrams must touch the mirror even number of times. However, inside $\oklrwzero$, by \cref{quadratic,reflection} we have for any $i,j$
\[
 \begin{tikzpicture}
            [anchorbase]
            \mirror{-.25}{0}{1};
             \draw (0,0) \botlabel{i} to[out=up,in=right] (-.25,.25) to[out=right,in=down] (0,.5) to [out=up,in=right] (-.25,.75) to[out=right,in=down] (0,1)\toplabel{i};
        \end{tikzpicture}
        =
         \begin{tikzpicture}
            [anchorbase]
            \mirror{-.25}{0}{1};
             \draw (.1,0) \botlabel{i} -- (.1,1)\toplabel{i};
        \end{tikzpicture}
    \text{ and }
     \begin{tikzpicture}
        [anchorbase]
        \mirror{-.25}{0}{1};
       \draw (0,0) \botlabel{i} to[out=up,in=-15] (-.25,.2) to [out=15,in=down] (.3,.6) \braidup (0,1) \toplabel{\tau i};
        \draw (.3,0) \botlabel{j} to[out=up,in=right] (-.25,.5)  to [out=right,in=down] (.3,1) \toplabel{\tau j};
    \end{tikzpicture}
    =
    \begin{tikzpicture}
        [anchorbase]
        \mirror{-.25}{0}{1};
        \draw (0,0) \botlabel{i} \braidup (.3,.4) to [out=up,in=-15] (-.25,.8) to [out=15,in=down] (0,1) \toplabel{\tau i};
        \draw (.3,0) \botlabel{j} to[out=up,in=right] (-.25,.5)  to [out=right,in=down] (.3,1) \toplabel{\tau j};
    \end{tikzpicture}.
\]
Thus we may always use the second relation above, together with \cref{braidA,redbraid1,redbraid2}, to detach the black strands from the mirror.  
It follows that $e(\oklrwzero^{\lamseq})e$ is spanned by Stendhal diagrams in which the black strands never touch the mirror and whose domains and codomains lie in $\obj{\klrw}^{\lamseq}$.  
Consequently, the identification with $\klrwalg$ is obtained simply by removing the mirror; see also \cref{embed}.

For the second equality, it suffices to show that for every object $(\bi,\redkappa,\lamseq)$ in $\oklrwzero$, the identity morphism
\(
\id_{(\bi,\redkappa,\lamseq)}
\)
lies in the ideal $(\oklrwzero^{\lamseq})\,e\,(\oklrwzero^{\lamseq})$.  
Here $\bi$ is a word in the alphabet $Q_0$, and it may contain letters in $Q_{0,-}$.  
We argue by an induction on the number $k$ of occurrences of vertices from $Q_{0,-}$ in the word $\bi$.
When $k=0$, there is nothing to prove. Suppose the statement is true for $k-1$, and $i_l$ is the $k$-th occurrence of vertices from $Q_{0,-}$ in the word $\bi$. Note that we may cross any red strand with the black strand labelled by $i_l$ freely since weights in $\lamseq$ are associated with $\Qui_{+}$ only. By the inductive hypothesis, we may assume $i_1,i_2,\ldots i_{l-1}\in  Q_{0,+}$ and then we have (omitting the positions of the red strands):
\begin{gather*}
   \begin{tikzpicture}
       [anchorbase]
       \mirror{-.25}{0}{1};
       \draw (0,0)\botlabel{i_1} -- (0,1)\toplabel{};
       \draw (.25,0)\botlabel{i_2} -- (.25,1);
       \node at (.5,.5){$\scriptstyle \cdots$};
       \draw (.75,0)\botlabel{i_{l-1}} -- (.75,1);
       \draw (1.25,0)\botlabel{i_l} -- (1.25,1);
      \node at (1.5,.5){$\scriptstyle \cdots$};
   \end{tikzpicture}
   \overset{\cref{doublecross}}{=}
   \begin{tikzpicture}
       [anchorbase]
       \mirror{-.25}{0}{1};
       \draw (0,0)\botlabel{i_1} -- (0,1)\toplabel{};
       \draw (.25,0)\botlabel{i_2} -- (.25,1);
       \node at (.5,.5){$\scriptstyle \cdots$};
       \draw (.75,0)\botlabel{i_{l-1}} -- (.75,1);
       \draw (1.25,0)\botlabel{i_l} to[out=150,in=down] (-.15,.5) to[out=up,in=-150] (1.25,1);
      \node at (1.5,.5){$\scriptstyle \cdots$};
   \end{tikzpicture}
   \overset{\cref{reflection}}{=}
      \begin{tikzpicture}
       [anchorbase]
       \mirror{-.25}{0}{1};
       \draw (0,0)\botlabel{i_1} -- (0,1)\toplabel{};
       \draw (.25,0)\botlabel{i_2} -- (.25,1);
       \node at (.5,.5){$\scriptstyle \cdots$};
       \draw (.75,0)\botlabel{i_{l-1}} -- (.75,1);
       \draw (1.25,0)\botlabel{i_l} to[out=150,in=-15] (-.25,.3) to [out=15,in=-15] (-.25,.7) to[out=15,in=-150] (1.25,1);
      \node at (1.5,.5){$\scriptstyle \cdots$};
      \draw[dashed] (-.25,.65) -- (1.5,.65);
      \draw[dashed] (-.25,.35) -- (1.5,.35);
   \end{tikzpicture}
   \in (\oklrwzero^{\lamseq})\,e\,(\oklrwzero^{\lamseq})
\end{gather*}
where the middle identity morphism is a summand of $e$ since $\tau i_l\in Q_0^{(+)}$. This completes the proof.
\end{proof}

\subsection{Basis}
Recall the Weyl group of type $B_n$ is isomorphic to the signed symmetric group 
$SS_n$, the group of permutations $w$ of 
$\{\pm 1, \pm 2, \ldots, \pm n\}$ such that 
$w(-i)=-w(i)$. It is generated by 
$\{s_i\mid 0\le n-1 \}$, where $s_0=(-1,1)$ and $s_i=(i,i+1)(-i,-i-1)$. 

We will use  the mirror strand 
diagrams to represent each $w$ in $SS_n$. These are a special kind of $2n$-strand diagrams such that each strand connects $i$ and $w(i)$, but the picture is required to be symmetric across a central axis (the mirror). For example, 
\[ s_0= 
\begin{tikzpicture}
            [anchorbase]
            \mirror{-.5}{0}{1};
            \draw (0,0) \botlabel{1} to[out=up,in=right] (-.5,.5) to[out=right,in=down] (0,1)\toplabel{1};
            \draw (-1,0) \botlabel{-1} to[out=up,in=left] (-.5,.5) to[out=left,in=down] (-1,1)\toplabel{-1};
            \draw (.5,0)\botlabel{2} to (.5,1)\toplabel{2};
            \node at (1,.5) {$\cdots$};
            \draw (1.5,0)\botlabel{n} to (1.5,1)\toplabel{n};
             \draw (-1.5,0)\botlabel{-2} to (-1.5,1)\toplabel{-2};
            \node at (-2,.5) {$\cdots$};
            \draw (-2.5,0)\botlabel{-n} to (-2.5,1)\toplabel{-n};
        \end{tikzpicture}.
        \]
  So, each mirror pair of crossings corresponds to a simple reflection $s_i$, while a crossing which occurs on the top of the mirror corresponds to the simple refection $s_0$. 

  Since the picture is symmetric, we can only draw the right half of the picture so as to corresponds to the diagrams in $\cRB$. For example, 
  \[s_0= 
  \begin{tikzpicture}
            [anchorbase]
            \mirror{-.5}{0}{1};
            \draw (0,0) \botlabel{1} to[out=up,in=right] (-.5,.5) to[out=right,in=down] (0,1)\toplabel{1};
            \draw (.5,0)\botlabel{2} to (.5,1)\toplabel{2};
            \node at (1,.5) {$\cdots$};
            \draw (1.5,0)\botlabel{n} to (1.5,1)\toplabel{n};
        \end{tikzpicture}.
        \]
 For any  $\bi\in Q_0^n$ and $w\in SS_n$, let $w\bi=(j_1,\ldots,j_n)\in Q_0^n$ be obtained from 
 $\bi$ such that if $w(k)>0$,  then $j_{w(k)}=i_k$ and if $w(k)<0$, then $j_{-w(k)}=\tau(i_k) $. 

  We fix a sequence of dominant weight $\lamseq$ in the following. 
\begin{definition}
\label{def:reduceddiagram}
For any $w\in SS_n$ and  $(\bi,\redkappa)$
and $\redkappa':[1,\ell]\rightarrow [0,n]$, we  choose a Stendhal diagram (without dots)  $D_{w,\redkappa'}=1_{w\bi, \redkappa',\lamseq}D_{w,\redkappa'} 1_{\bi, \redkappa,\lamseq}$  from $ (\bi,\redkappa,\lamseq)$ to $(w\bi, \redkappa',\lamseq)$ such that 
\begin{enumerate}
    \item the underlying diagram without red lines is  a mirror strand  diagram of $w$
    such that no two black strands cross twice and each strand meets the mirror at most once. In other words, the crossing reading bottom to top is a reduced expression for $w$.
    \item the red and black strand which does not meet the mirror cross at most once. 
    \item the red and black strands which meet the mirror cross at most twice.
    We call such a diagram a  reduced Stendhal diagram for $w$ and $(\bi,\redkappa)$ and $\redkappa'$. 
\end{enumerate}
\end{definition}

\begin{example} For the following two diagrams, 
\[ D=\begin{tikzpicture}
            [anchorbase]
            \mirror{-.5}{-1}{1};
            \draw (0.3,-1)to [out=up,in=down](.5,-.5)  to[out=up,in=right] (-.5,.5) to[out=right,in=down] (0,1);
            \draw (.6,1) to [out=down,in=right] (-.5,0) to [out=right,in=up] (.9,-1);
            \draw (.6,-1) to (1.5,1);
            \draw (0.9,1) to (1.5,-1);
            \draw[red] (.2,1) to (-.2,-1);
              \draw[red] (1.2,1) to [out=down, in=up] (1.5,0) to [out=down,in=up] (1.2,-1);
                     \end{tikzpicture}, \quad 
       D'=\begin{tikzpicture}
            [anchorbase]
            \mirror{-.5}{-1}{1};
            \draw (0,-1)to [out=up,in=down](.3,-.5)  to[out=up,in=right] (-.5,.5) to[out=right,in=down] (0,1);
            \draw (.6,1) to [out=down,in=right] (-.5,0) to [out=right,in=up] (.9,-1);
            \draw (.6,-1) to (1.5,1);
            \draw (0.9,1) to (1.5,-1);
            \draw[red] (-.2,1) to   (.2,-1);
              \draw[red] (1.2,1) to [out=down, in=up] (1,0) to [out=down,in=up] (1.7,-1);
                     \end{tikzpicture}              \]
we have  that $D$ is a reduced Stendhal diagram. While $D'$ satisfies condition (1)—as its underlying diagram (omitting red lines) corresponds to a reduced expression for some $w \in SS_n$—it fails to satisfy (2) and (3). Specifically, the first red line intersects a black strand meeting the mirror three times, while the second red line intersects a non-mirror-meeting black strand twice.   Thus, $D'$ is not a  reduced Stendhal diagram.    
\end{example}

Let 
$B_{\redkappa',\redkappa}=\{ D_{w,\redkappa'}y^\mathbf a1_{\bi, \redkappa,\lamseq} \mid w\in SS_n, \bi\in Q_0^n, \mathbf a\in \N^n \}$, 
where $y^\mathbf a=y_1^{a_1}\ldots y_n^{a_n}$.

\begin{lemma}
\label{lem:spanpart}
    The set $B=\bigcup_{\redkappa',\redkappa}B_{\redkappa',\redkappa}$ 
    spans $\oklrwmu^{\lamseq}$.
\end{lemma}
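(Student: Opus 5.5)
We prove \cref{lem:spanpart} by the usual straightening argument for KLR-type algebras, adapted to the mirror as in \cite{VV11,Prz23} and to red strands as in \cite{Web17a}. By definition, $\oklrwmu^{\lamseq}$ is spanned by composites of generators: dots, black--black crossings, black--red crossings, and reflections of the leftmost black strand at the mirror, horizontally tensored with identities. Every such composite is a Stendhal-type diagram with dots. Using \cref{dotcross}, the second relation of \cref{quadratic}, and \cref{dotmovered}, we slide all dots to the bottom of the diagram. Each time a dot passes a crossing or a reflection, either the dot moves through at the cost of a sign, or we obtain a correction term in which that crossing is resolved. In either case the correction has strictly fewer crossings and reflections. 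Hence, by induction on the total number of crossings plus mirror contacts, it suffices to show the following: every dotless diagram $D$ from $(\bi,\redkappa)$ to $(\bj,\redkappa')$ lies in the span of elements $D_{w,\redkappa'}\,y^{\mathbf a}1_{\bi,\redkappa,\lamseq}$, modulo diagrams of smaller complexity.

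Given a dotless $D$, forget the red strands. Its black part is a mirror strand diagram and determines some $w\in SS_n$ with $w\bi=\bj$. If this black part is not reduced, we find a bigon to remove. This is either two black strands crossing twice, or a black strand meeting the mirror twice, or a pair of strands forming a non-reduced configuration involving the mirror. The latter corresponds to a failure of the type $B$ braid relations. We pull strands using isotopy, the braid relations \cref{braidA}, \cref{redbraid1}, \cref{redbraid2}, and the reflection relation \cref{reflection}. By the exchange/deletion property for $SS_n$, this lets us bring the offending pair adjacent, where we apply \cref{doublecross} or the first relation of \cref{quadratic}. Both produce polynomials times a diagram with fewer crossings or reflections. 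All the braid-type relations hold up to error terms that have strictly fewer crossings (the $\delta$-terms in \cref{braidA}, \cref{redbraid1}, and \cref{reflection}). Induction on length therefore reduces to the case where the black part is a reduced expression for $w$. In the reduced case, the same relations show that any two reduced expressions of $w$ agree modulo shorter terms. So $D$ agrees with the chosen black skeleton of $D_{w,\redkappa'}$ up to lower terms.

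It remains to arrange the red strands so that conditions (2) and (3) of \cref{def:reduceddiagram} hold. A black strand that does not meet the mirror and crosses a red strand twice can be straightened by the bigon relation \cref{reddoublecross}, which creates only dots. A black strand that meets the mirror must pass the red strand before and after reflecting. If it crosses a given red strand more than twice, then some consecutive pair of crossings forms a bigon on one side of the reflection. We isotope that bigon away from the mirror using \cref{redbraid2} and the fact that red strands never touch the mirror, and then remove it using \cref{reddoublecross}. The $\delta$-correction terms coming from \cref{redbraid1} again have fewer black--red crossings. Finally, the remaining red-strand routings are all isotopic rel the black skeleton, up to such corrections, so $D$ equals $D_{w,\redkappa'}$ modulo lower terms, and we push the dots to the bottom.

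The main obstacle is making the induction well founded. The complexity must decrease in every correction term, including those coming from \cref{reflection}, where the right-hand side replaces the reflections by a crossing with a polynomial coefficient. We will use a lexicographic measure: first the number of mirror contacts, then the number of black--black crossings, then the number of black--red crossings. We also need to check carefully, via the mirror-doubled picture, that every non-reduced mirror diagram contains a removable bigon of one of the types handled above.
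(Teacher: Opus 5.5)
Your proposal is correct and follows essentially the paper's own straightening argument. Like the paper, you induct on the number of crossings with mirror contacts counted as crossings, slide dots to the bottom via \cref{dotcross}, \cref{quadratic} and \cref{dotmovered}, reduce to a reduced skeleton with the quadratic and braid-type relations, and note that two reduced choices agree modulo fewer crossings; your lexicographic complexity and your explicit appeal to the word property of $SS_n$ make precise what the paper leaves implicit. One point to tighten, which is equally loose in condition (3) of \cref{def:reduceddiagram}: a strand that meets the mirror but starts and ends to the left of a red strand crosses it zero times in a minimal diagram, so you should cancel every red--black bigon lying on one side of the reflection, not only when there are more than two crossings; otherwise your final ``isotopic rel the black skeleton'' step fails, because the $0$- and $2$-crossing routings differ by a dot factor from \cref{reddoublecross} rather than by lower terms.
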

\begin{proof}
We prove by induction on the number $c$ of crossings that any diagram $D$ can be written as a linear combination of elements in $B$. Note that we also view the meet between the strand and the mirror as a crossing since it represents $s_0$ in the language of the mirror strand diagram. 

If $c=0$, then $D$ is the identity morphism of some $1_{\bi,\redkappa,\lamseq} $ with some dots and hence $D\in B$. Suppose $c>0$.
By \eqref{dotcross}, \eqref{quadratic}, and \eqref{dotmovered}, dots can be moved freely through crossings modulo diagrams with fewer crossings. So, we can assume  the dots of $D$ are all at the bottom.  
Using the quadratic \eqref{doublecross}, \eqref{reddoublecross} and \eqref{quadratic}, and also 
 the braid  relations \eqref{braidA} and \eqref{reflection}, and \eqref{redbraid1}--\eqref{redbraid2},  we can assume that  
(1)--(3) in Definition \ref{def:reduceddiagram} hold for the underlying diagram dropping the dots of $D$. Finally, the braid relations also show that for any two choices $D'_{w,\redkappa'}$ and $D_{w,\redkappa'}$ for any  fixed $w$, we must have $D '_{w,\redkappa'}=D_{w,\redkappa'} $  modulo diagrams with fewer crossings. This completes the proof by inductive hypotheses.
\end{proof}

The proof of the following result follows the same triangularity argument as the KLRW basis theorem of Webster, combined with the PBW basis for oKLR algebras.

\begin{theorem}
\label{oklrwbasis}
   The set  $B=\bigcup_{\redkappa',\redkappa}B_{\redkappa',\redkappa}$ 
    is a basis of  $\oklrwmu^{\lamseq}$.
\end{theorem}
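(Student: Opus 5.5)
Spanning is already \cref{lem:spanpart}, so only linear independence of $B$ remains. I will prove it by acting on the polynomial representation $\ctP$ of \cref{prop:polyrep} and running a leading-term triangularity argument, as in Webster's proof of the KLRW basis theorem \cite[Thm.~4.16]{Web17a} and the PBW basis for oKLR algebras \cite{VV11,Prz23}. By \cref{embedzeroklrw} I may take $\mu=\bzero$. The $\mu$-dependence enters only through the extra factor $(-Y_1)^{\mu_i}$ in \cref{Polreflect}, which does not affect leading terms.

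\textbf{Step 1: localize.} Fix source and target objects $(\bi,\redkappa)$ and $(\bj,\redkappa')$. Extend scalars from $\Pol(\bi,\redkappa)$ to $\mathrm{Frac}\,\Pol(\bi,\redkappa)=\kk(Y_1,\dots,Y_n)$. By \cref{Polklrw1,Polklrw2,Polreflect}, each generator then acts as an element of the twisted group algebra $\kk(Y)\rtimes SS_n$, where $SS_n$ acts through the operators $f\mapsto f^{s_j}$ of \cref{weylact}:
\begin{itemize}
\item a black crossing acts as $X_{ij}(Y_2,Y_1)s_1$ or as $(Y_2-Y_1)^{-1}(s_1-1)$;
\item a mirror reflection acts as $(-Y_1)^{\mu_i}s_0$;
\item a red crossing acts as multiplication by a monomial.
\end{itemize}
Consequently, for a reduced Stendhal diagram $D_{w,\redkappa'}$,
\[
\ctP(D_{w,\redkappa'}\,y^{\mathbf a})=\sum_{v\le w} g_v\, v ,
\]
where $v\le w$ is the Bruhat order on $SS_n$ and $g_v\in\kk(Y)$.

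\textbf{Step 2: the leading coefficient.} I will show that the top coefficient is
\[
g_w=c_{w}\,Y^{\mathbf a}\neq 0 ,
\]
with $c_w\in\kk(Y)^\times$ depending only on $w$ and $\redkappa'$, not on $\mathbf a$. Three facts give this:
\begin{itemize}
\item the word is reduced, so the $v=w$ term arises only by taking the "$s$" part of every factor;
\item each such part has a nonzero coefficient ($X_{ij}$, $(Y_2-Y_1)^{-1}$, or $(-Y_1)^{\mu_i}$);
\item each red crossing contributes a nonzero monomial.
\end{itemize}
Conditions (2) and (3) of \cref{def:reduceddiagram} make $c_w$ independent of the chosen reduced diagram up to the bounded red contributions.

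\textbf{Step 3: conclude.} Suppose $\sum_{w,\mathbf a}\beta_{w,\mathbf a}D_{w,\redkappa'}y^{\mathbf a}=0$ with not all $\beta_{w,\mathbf a}$ zero. Choose $w$ Bruhat-maximal among the terms with nonzero coefficient. Every other $w'$ in the sum either equals $w$ or satisfies $w'\not\ge w$, and hence contributes nothing to the coefficient of $w$. Therefore the coefficient of $w$ in the image is
\[
c_w\sum_{\mathbf a}\beta_{w,\mathbf a}Y^{\mathbf a}.
\]
The element $w\in SS_n$ is a basis element of $\kk(Y)\rtimes SS_n$ as a left $\kk(Y)$-vector space. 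So this coefficient must vanish, and since the monomials $Y^{\mathbf a}$ are linearly independent, every $\beta_{w,\mathbf a}=0$, a contradiction. This gives independence, and also faithfulness of $\ctP$ (\cref{cor:faithfulpolyrep}).

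\textbf{Main obstacle.} The hard part is Step 2: making sure the leading-term bookkeeping is right. Two points need care.
\begin{itemize}
\item A strand passing through the mirror is conjugated by $s_0$, and the extra red crossings allowed by condition (3) act by multiplication operators whose variables are twisted by the reflection. I must check that these still give a nonzero factor in $\kk(Y)$ rather than cancelling.
\item Dots pushed past reflections change sign (\cref{quadratic}). This only rescales $Y^{\mathbf a}$ by $\pm1$ and permutes variables by $w$, which preserves independence of monomials.
\end{itemize}
I would first check these points in rank one ($n=1$, one red strand), and then argue by induction on $\ell(w)$ using the factorization $w=s_kw'$.
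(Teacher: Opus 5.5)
Your argument is correct, but it takes a different route from the paper's. The paper proves independence in two stages.
\begin{enumerate}
\item It first handles $\redkappa=\redkappa'=\red{n^\ell}$ (all red strands on the right) by transporting the PBW basis of the oKLR algebra \cite[Prop.~2.9]{Prz23} through the functor that appends red strands on the right.
\item It then reduces general $\redkappa,\redkappa'$ to that case by sandwiching between $\bar\theta_{\redkappa'}$ and $\theta_\redkappa$ and arguing by triangularity modulo diagrams with fewer crossings, as in Webster's KLRW basis argument \cite{Web15}.
\end{enumerate}
Faithfulness of $\ctP$ is deduced only afterwards, in \cref{cor:faithfulpolyrep}, by the same leading-term computation in $\kk(Y)\rtimes SS_n$ that you use. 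You apply that computation directly to the spanning set of \cref{lem:spanpart}. Red--black crossings act by nonzero multiplication operators and do not change the underlying signed permutation, so $\ctP(D_{w,\redkappa'})$ is Bruhat-triangular with nonzero leading coefficient for all $\redkappa,\redkappa'$ at once. You therefore get linear independence of $B$ and faithfulness of $\ctP$ together, with no appeal to the oKLR basis theorem and no need to control the lower-order terms produced by the $\theta$-sandwich. The paper's route buys a reduction to established results and a parallel with Webster; yours is shorter and self-contained.

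A few points to tidy.
\begin{enumerate}
\item Because the dots sit at the bottom, the coefficient of $w$ in $\ctP(D_{w,\redkappa'}y^{\mathbf a})$ is $c_w\,w(Y^{\mathbf a})$, a signed monomial with permuted exponents, not $c_wY^{\mathbf a}$. This is harmless.
\item Your ``main obstacle'' is not one. $c_w$ is a product of nonzero elements of the field $\kk(Y)$, each twisted by a field automorphism, so it cannot vanish. No rank-one check, induction on $\ell(w)$, or use of conditions (2)--(3) of \cref{def:reduceddiagram} is needed.
\item The reduction to $\mu=\bzero$ via \cref{embedzeroklrw} is superfluous, since you handle the factor $(-Y_1)^{\mu_i}$ directly. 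Drop it: the paper's claim that this functor is injective is itself tied to faithfulness.
\item The relation you obtain lives among operators, not in the abstract twisted group algebra. So the last step should invoke Dedekind--Artin independence of the distinct automorphisms $f\mapsto f^v$. Every $v$ occurring satisfies $v\bi=\bj$. When $\operatorname{char}\kk=2$ the sign changes act trivially on $\kk(Y)$. But since $\tau$ has no fixed vertices, two distinct such $v$ have distinct underlying permutations, so these automorphisms are still pairwise distinct.
\end{enumerate}
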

\begin{proof}By Lemma \ref{lem:spanpart}, it remains to show that $B$ is linearly independent.
  We first prove the claim that $\{D_{w,\red{n^\ell}}y^\mathbf a 1_{\bi, \red{n^\ell},\lamseq} \mid w\in SS_n, \bi\in Q_0^n, \mathbf a\in \N^n \}$ is linearly independent.
  In fact, we consider the obvious  functor from 
$ _\mu ^\tau\mathcal R$ to  $\oklrwmu^{\lamseq}$, sending each diagram $b$ to the diagram obtained by adding the 
 $\ell$ red lines to the right of $b$.
 Then composing this functor with the polynomial representation $\ctP$ yields the faithful polynomial representation of $ _\mu ^\tau\mathcal R$ in \cite{Prz23}. Hence the claim follows from the PBW basis of  $ _\mu ^\tau\mathcal R$ in \cite[Proposition 2.9]{Prz23} since $B_{\red{n^\ell},\red{n^\ell}}$ is the image of the corresponding  basis element of $ _\mu ^\tau\mathcal R$.

 Next,  we define for any $\redkappa$ an element $\theta_\redkappa$ such that it is a sum over all $\bi$ of the unique   Stendhal  diagram from  $ (\bi,\red{n^\ell}, \lamseq  ) $ to $ (\bi,\redkappa, \lamseq ) $ which has no dots and a minimal number of crossings. 
 Let $\bar \theta_\redkappa$ be the diagram obtained from $\theta_\redkappa$ by reflecting the diagram with respect to the horizontal axis. Then by the 
 same arguments as in \cite[Lemma 4.17]{Web15}, the diagram $\bar \theta_{\redkappa'}D_{w,\redkappa'}y^{\mathbf{a}} 1_{\bi,\redkappa,\lamseq} \theta_\redkappa$ is equal to 
 $D_{w,\red{n^\ell}} y^{\mathbf{a}+\mathbf{b}} 1_{\bi, \red{n^\ell},\lamseq} $ modulo the span of diagrams with fewer crossings than $D_{w, \red{n^\ell}}$, for some  $\mathbf{b}\in \mathbb Z_{\ge0}^n$ determined by the relations in \eqref{reddoublecross}.

 Now we are ready to prove the linear independence of $B_{\redkappa',\redkappa}$. Suppose
 there is a non-trivial linear relation $\sum c_{w,\mathbf{a}}D_{w,\redkappa'}y^{\mathbf{a}} 1_{\bi,\redkappa,\lamseq}=0$ with $c_{w,\mathbf a }\neq 0$.
 Multiply $\bar \theta_{\redkappa'}$ on the left and $\theta_\redkappa$ on the right. Then using the same argument as in the proof of  Lemma \ref{lem:spanpart}, each term of the summation can be rewritten as elements of form $D_{w',\red{n^\ell}}y^{\mathbf a'} 1_{\bi,\red{n^\ell},\lamseq}$.  Furthermore, choose a $w\in SS_n$ and $\mathbf a$ in the summation  (i.e., $c_{w,\mathbf a}\neq 0$) such that $w$ is maximal in the Bruhat order. Then we multiply  by $\bar \theta_{\redkappa'}$ on the left and $\theta_\redkappa$ on the right and rewrite in  terms of elements in $B_{\red{n^\ell},\red{n^\ell}}$.  The claim in the previous paragraph shows that $D_{w,\red{n^\ell}}y^{\mathbf a+\mathbf b}1_{\bi,\red{n^\ell},\lamseq} $ also has coefficient $c_{w,\mathbf a}$. since no element of  $B_{\red{n^\ell},\red{n^\ell}}$ other than  $D_{w',\red{n^\ell}}y^{\mathbf a'} $ could contribute to its coefficient. Since $B_{\red{n^\ell},\red{n^\ell}}$ is linearly independent (which was shown in the beginning of the proof), we must have $c_{w,\mathbf a}=0$, a contradiction. This show the original linear relation must be trivial and hence $B$ is linearly independent.   
\end{proof}

\begin{corollary}
\label{cor:faithfulpolyrep}
The polynomial representation from \cref{prop:polyrep} is faithful.
\end{corollary}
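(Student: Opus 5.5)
The plan is to show that the images under $\ctP$ of the basis $B$ from \cref{oklrwbasis} are linearly independent operators on $\bigoplus_{(\bi,\redkappa)}\Pol(\bi,\redkappa)$. Then any element of $\oklrwmu^{\lamseq}$ acting by zero is a combination of basis elements with zero image, hence zero. Since idempotents act by projection onto the summands, it suffices to fix source $(\bi,\redkappa)$ and target $(\bj,\redkappa')$. We must then show that the operators $\ctP(D_{w,\redkappa'}y^{\mathbf a}1_{\bi,\redkappa,\lamseq})$, with $w\bi=\bj$ and $\mathbf a\in\N^n$, are linearly independent maps $\Pol(\bi,\redkappa)\to\Pol(\bj,\redkappa')$.

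The key step is a leading-term computation. Extend scalars to the fraction field $\kk(Y_1,\dots,Y_n)$, on which $SS_n$ acts by \cref{weylact}. By \cref{Polklrw1,Polklrw2,Polreflect}, each generator acts as a polynomial multiplication, or as a twisted operator $g\cdot f^{s_k}$ with $g\neq 0$ a rational function, or, for equal-label crossings, as a Demazure-type operator $\frac{f^{s_k}-f}{Y_{k+1}-Y_k}$. Red crossings contribute only multiplication by powers of $Y$. So each diagram acts as an element of the twisted group algebra $\kk(Y)\rtimes SS_n$.

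For a reduced diagram $D_{w,\redkappa'}$, expanding the product along a reduced word for $w$ gives
\[
\ctP(D_{w,\redkappa'}) \;=\; g_w\cdot (\,\cdot\,)^{w} \;+\; \sum_{v<w} g_{v}\cdot(\,\cdot\,)^{v}.
\]
Here $g_w$ is a nonzero rational function: a product of the nonzero factors $X_{ij}$, $(-Y)^{\mu_i}$, powers of $Y$ from red crossings, and $\pm(Y_{k+1}-Y_k)^{-1}$. The sum runs over elements strictly smaller than $w$ in the Bruhat order. This is the standard triangularity for nilHecke-type operators, now for type $B_n$.

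Suppose $\sum c_{w,\mathbf a}\ctP(D_{w,\redkappa'}y^{\mathbf a})=0$ with not all coefficients zero. Choose $w$ Bruhat-maximal among those with some $c_{w,\mathbf a}\neq 0$. Since the elements of $SS_n$ are linearly independent over $\kk(Y)$ (Dedekind/Artin independence of distinct field automorphisms, as $SS_n$ acts faithfully on $\kk(Y)$), the coefficient of $w$ must vanish. That coefficient is $g_w\,(\sum_{\mathbf a}c_{w,\mathbf a}Y^{\mathbf a})^{w}$, so $\sum_{\mathbf a}c_{w,\mathbf a}Y^{\mathbf a}=0$ and all $c_{w,\mathbf a}=0$, a contradiction. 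For this argument, the dots $y^{\mathbf a}$ are placed at the bottom of the diagram, as in $B$.

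The main obstacle is certifying that the leading coefficient $g_w$ really is nonzero and that the lower terms are Bruhat-smaller. The mirror generator and the red crossings are pure twists or multiplications, so they cause no trouble. Only same-label crossings produce a non-leading term, namely the identity part of the Demazure operator. Reducedness of the expression (condition (1) in \cref{def:reduceddiagram}) then ensures, by the subword property, that the leading terms multiply to the top term $w$ without cancellation.

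Alternatively, one could shortcut the argument as the proof of \cref{oklrwbasis} did. For $\redkappa=\redkappa'=\red{n^\ell}$, faithfulness reduces to that of Przeździecki's polynomial representation of $\cRB$. For general $\redkappa,\redkappa'$, compose with $\theta_\redkappa$ and $\bar\theta_{\redkappa'}$: these act injectively, being multiplication by nonzero monomials and the identity map by \cref{Polklrw1}. If $x$ acts by zero, then $\bar\theta_{\redkappa'}x\theta_\redkappa$ acts by zero and hence vanishes. The triangularity from that proof shows that this forces $x=0$.
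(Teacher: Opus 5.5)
Your argument is correct and is essentially the paper's own proof. In both, you extend scalars to $\operatorname{Frac}\kk[Y_1,\ldots,Y_n]$, note that a reduced Stendhal diagram acts as $q_w w+\sum_{u<w}q_{u,w}u$ with $q_w\neq0$, choose $w$ Bruhat-maximal with nonzero coefficient, and conclude by Dedekind independence of distinct field automorphisms.

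One small caveat, which applies to the paper's proof as well: if $\operatorname{char}\kk=2$, then $s_0$ acts trivially on $\kk(Y)$, so $SS_n$ does not act faithfully and that justification fails. You should instead use that every $u$ occurring in the expansion of an operator $\Pol(\bi,\redkappa)\to\Pol(\bj,\redkappa')$ satisfies $u\bi=\bj$. Since $\tau$ has no fixed vertices, two such $u$ with the same underlying permutation must coincide, so they still act by pairwise distinct automorphisms.
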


\begin{proof}
It suffices to prove faithfulness on each morphism space. Fix two objects
\[
(\bi,\redkappa,\lamseq)
\qquad\text{and}\qquad
(\bj,\redkappa',\lamseq)
\]
with \(n\) black strands, and set
\(
K:=\operatorname{Frac}\kk[Y_1,\ldots,Y_n].
\)
After extending the polynomial representation to \(K\), the action of a
reduced Stendhal diagram has the triangular form
\[
\ctP(D_{w,\redkappa'})
=
q_w w+\sum_{u<w}q_{u,w}u,
\qquad q_w\in K^\times,
\]
where \(<\) is the Bruhat order on \(SS_n\). Indeed, this follows directly
from \cref{Polklrw1,Polklrw2,Polreflect}: each black crossing or mirror
reflection has a nonzero coefficient of the corresponding simple
reflection, while all other terms have smaller Bruhat order; red--black
crossings act by multiplication by nonzero polynomials.

By \cref{oklrwbasis}, every element of the fixed morphism space has a
unique expression
\[
x=\sum_{w\in SS_n}
D_{w,\redkappa'}p_w(y)
1_{\bi,\redkappa,\lamseq},
\qquad
p_w(y)\in\kk[y_1,\ldots,y_n].
\]
Assume that \(x\neq0\), and choose \(w\) maximal in the Bruhat order such
that \(p_w\neq0\). The coefficient of \(w\) in \(\ctP(x)\) is then
\(
q_w\,w\bigl(p_w(Y)\bigr),
\)
which is nonzero. Since distinct elements of \(SS_n\) induce linearly
independent automorphisms of \(K\), this term cannot cancel. Hence
\(\ctP(x)\neq0\), proving the result.
\end{proof}

\begin{remark}
\label{rem:extend}
    The results of this section admit straightforward generalizations to quivers with involution that have $\tau$-fixed vertices.  
The corresponding oKLR algebra and its polynomial representation have already been developed in \cite{Prz23}, and one can define the analogous oKLRW category/algebra in the same way as in \cref{defoKLRWC}. However, for simplicity we do not treat this case here.
\end{remark}

\section{KLR iYangians}
\label{klrtY}
Recall the quiver with involution from \cref{sec:qwi}. 
Throughout this section we let $\lambda=\sum_{i\in Q_0}\lambda_i \varpi_i$ be a dominant weight, ${}^\tau \lambda=\lambda+\tau \lambda$, $\mu$ be a $\tau$-invariant weight such that ${}^\tau\lambda \geq \mu$. We also fix a set of parameters $\bR=(\bR_i)_{i\in Q_0}\in \prod_{i\in Q_0} \C^{\lambda_i}$ so that each $\bR_i$ is of size $\lambda_i$.

\subsection{Shifted iYangian}
 Recall the symmetric generalized Cartan matrix $C$ from \cref{sec:KLR} and the symmetric Kac-Moody algebra $\mathfrak{g}$. Following \cite[Def. 3.1]{SSX25} (see also \cite{LWW,LWW2}), we now define the shifted iYangian ${}^{\tau}\mathbf{Y}_{\mu}(\mathfrak{g})$ . 
\begin{definition}
\label{shiftedTY}
The shifted iYangian ${}^{\tau}\mathbf{Y}_{\mu}(\mathfrak{g})$ is the $\mathbb{C}(\hbar)$-algebra generated by 
$h_{i,r}$ and $b_{i,s}$ for $i\in Q_0, r\geq -( \alpha_i,\mu)-1$ and $s\geq 0$, subject to the following relations:
\begin{align}
\label{eq:hcomm}
    & {\left[h_{i, r}, h_{j, s}\right]=0, \qquad 
    h_{i,s} = (-1)^{s+1} h_{\tau i,s}}, 
        \\
\label{eq:hbcomm}
    & {\left[h_{i, r+2}, b_{j, s}\right]-\left[h_{i, r}, b_{j, s+2}\right]} 
    =\frac{c_{i j}-c_{\tau i, j}}{2} \hbar\left\{h_{i, r+1}, b_{j, s}\right\}
    \\\notag  & \qquad 
    +\frac{c_{i j}+c_{\tau i, j}}{2} \hbar\left\{h_{i, r}, b_{j, s+1}\right\}
    +\frac{c_{i j} c_{\tau i, j}}{4} \hbar^2\left[h_{i, r}, b_{j, s}\right], 
        \\
\label{eq:bcomm}
    & {\left[b_{i, r+1}, b_{j, s}\right]-\left[b_{i, r}, b_{j, s+1}\right]=\frac{c_{i j}}{2}\hbar \left\{b_{i, r}, b_{j, s}\right\}-2 \delta_{\tau i, j}(-1)^r h_{j, r+s+1}},
\end{align}
and the Serre-type relations  
\begin{itemize}
    \item when $c_{i j}=0$, we have 
\begin{equation}\label{eq:commSerre}
    \left[b_{i, r}, b_{j, s}\right]=\delta_{\tau i, j}(-1)^r h_{j, r+s};
\end{equation}
\item when $c_{i j}=-1, j \neq \tau i \neq i$, we have 
\begin{equation}\label{eq:usualSerre}
    \operatorname{Sym}_{k_1, k_2}\left[b_{i, k_1},\left[b_{i, k_2}, b_{j, r}\right]\right]=0;
\end{equation}
\item when $c_{i, \tau i}=-1$, we have 
\begin{equation}\label{eq:iSerre}
    \operatorname{Sym}_{k_1, k_2}\left[b_{i, k_1},\left[b_{i, k_2}, b_{\tau i, r}\right]\right]=\frac{4}{3} \operatorname{Sym}_{k_1, k_2}(-1)^{k_1} \sum_{p=0}^{\infty} 3^{-p}\left[b_{i, k_2+p}, h_{\tau i, k_1+r-p}\right].
\end{equation}
\end{itemize}
Here we have adopted the convention that 
\begin{equation}\label{eq:vanishing}
\mbox{$h_{i,r}=0$ if $r<-(\alpha_i, \mu)-1$ and $h_{i,r}=\zeta_i$ if $r=-(\alpha_i, \mu)-1$},
\end{equation}
where $\zeta$ is a family of parameters 
$\zeta = (\zeta_i)_{i \in Q_0} \in \bigl(\mathbb{C}(\hbar)^\times\bigr)^I$ satisfying 
\begin{equation}\label{eq:zeta}
  \zeta_i = (-1)^{(\alpha_i, \mu)}\,\zeta_{\tau(i)} 
   \in \mathbb{C}(\hbar)^\times.
\end{equation}
\end{definition}
We note that the right-hand side of \cref{eq:iSerre} is a finite sum by the convention \cref{eq:vanishing}. By \cite[Rem 3.3]{SSX25}, the shifted iYangian ${}^{\tau}\mathbf{Y}_{\mu}(\mathfrak{g})$ is independent of the choice of $\zeta$ up to an algebra automorphism. We then define the following generating functions
\begin{align}
\label{generatingfunction}
h_i(z) & = \hbar\sum_{r\in \mathbb{Z}}h_{i,r}z^{-r-1},&
h_i^\circ(z) & = \hbar\sum_{r\geq 0}h_{i,r}z^{-r-1},&
b_i(z) & = \hbar\sum_{s\geq 0}b_{i,s}z^{-s-1}.
\end{align}

\subsection{iGKLO representations}
\label{sec:iGKLO}
Suppose that ${}^\tau\lambda-\mu=\sum_{i\in Q_0} v_i \alpha_i$ with $v_i=v_{\tau i}\geq 0$. 
Similar to \cite[Sect. 2.1]{KTWWY19}, we fix a parity bipartition $$Q_0=Q_0^{(+)}\sqcup Q_0^{(-)}$$ and  we
call vertices in $Q_0^{(+)}$ {\bf even} and those in $ Q_0^{(-)}$
{\bf odd}. We orient $\Qui$ so that edges of $\Qui$ always point from even vertices to odd ones. Thus we have 
\[
\tau (Q_0^{(\pm)})=Q_0^{(\mp)}.
\]
We then fix any total order on $Q_0^{(+)}$. This also determines a total order on $Q_0^{(-)}$ such that $i<j$ if and only if $\tau i > \tau j$ for any $i,j\in Q_0^{(-)}$.

\begin{remark}
    We note that, the results in this section does not depend on the total order we choose on $Q_0^{(+)}$, unlike \cite[Sect. 2.1]{KTWWY19} which assumes the even indices come before the odd ones. The key point for us is that, by our assumption, there is no arrow between any two even vertices.
\end{remark}

\begin{example}[Type AIII]
    Consider a quiver with involution obtained from a Satake diagram of type AIII (see \cite{Ara62}) as follows:
    \[\begin{tikzpicture}[scale=1, 
                    arrow/.style={<->,dashed}]
\foreach \i in {0,...,5}
    \node[circle, draw, inner sep=3pt] (a\i) at (\i,0) {};
    \foreach \i in {0,...,5}
    \node (b\i) at (\i,.05) {};
    \node  at (0,-.5) {$\scriptstyle 3$};
    \node  at (1,-.5) {$\scriptstyle 2^-$};
    \node  at (2,-.5) {$\scriptstyle 1$};
    \node  at (3,-.5) {$\scriptstyle 1^-$};
    \node  at (4,-.5) {$\scriptstyle 2$};
    \node  at (5,-.5) {$\scriptstyle 3^-$};
\draw[arrow] (b0) to[bend left=50] (b5);
\draw[arrow] (b1) to[bend left=40] (b4);
\draw[arrow] (b2) to[bend left=30] (b3);
\draw[->,ultra thick] (a0) -- (a1); 
\draw[->,ultra thick] (a2) -- (a1); 
\draw[->,ultra thick] (a2) -- (a3); 
\draw[->,ultra thick] (a4) -- (a3); 
\draw[->,ultra thick] (a4) -- (a5); 
\end{tikzpicture}
\]
where $\tau$ is indicated by the dashed arrows. We can choose \[
Q_0^{(+)}=\{1,2,3\},\quad Q_0^{(-)}=\{1^-,2^-,3^-\}.
\]
The total order on $Q_0^{(+)}$ can be chosen as $1<2<3$, then the corresponding total order on $Q_0^{(-)}$ is $3^-<2^-<1^-$.
\end{example}

 Consider the polynomial ring
$$ P = \C[\hbar][x_{i,j} : i \in Q_0^{(+)}, 1 \leqslant j \leqslant v_i ]. $$
Let $ \Sigma = \prod_{i \in  Q_0^{(+)}} \Sigma_{v_i} $ be the product of symmetric groups, acting on $P$ by permuting each set of variables.  We will be interested in the $\Sigma$-invariant polynomials $P^\Sigma$. For $i\in Q_0^{(+)}$ and $1\leq j\leq v_i$, let $d_{i,j}$ denote the algebra automorphism $P\to P$ that shifts the variable $x_{i,j}$ by $\hbar$. We also introduce $x_{i,j}=-x_{\tau i,j}$ and $d_{i,j}=d_{\tau i,j}^{-1}$ for $i\in Q_0^{(-)}$. 
Hence, we have the following relations
$$
d_{i_1,j_1}x_{i_2,j_2}
=
(x_{i_2,j_2}+
(\delta_{i_1,i_2}
-\delta_{i_1,\tau i_2 })
\delta_{j_1,j_2}\hbar
)d_{i_1,j_1}
$$
for any $i_1,i_2\in Q_0$, $1\leq j_1\leq v_{i_1}$ and $1\leq j_2\leq v_{i_2}$. 
For $i\in Q_0$ and $1\leq r\leq v_i$, we define
$$V_i(z) = \prod_{k=1}^{v_i}(z-x_{i,k}),\quad V_{i,r}(z)=\frac{V_i(z)}{z-x_{i,r}}.$$
 For each $i\in Q_0$, we also define
 $p_i(u):=\prod_{c\in \bR_i}(u-c\hbar)$.
\begin{prop} 
    \label{prop:iGKLO} 
    There is an action of ${}^\tau Y_\mu $ on $P^\Sigma$, depending on $\bR$, defined by 
    \begin{gather}
       \label{biGKLO} b_i(z)\mapsto  \sum_{r=1}^{v_i}\frac{1}{-z-x_{i,r}-\tfrac{\hbar}{2}} \prod_{i\to \tau i}V_{\tau i,r}(x_{i,r}+\tfrac{\hbar}{2})\prod_{j\neq \tau i \atop i \to j}V_{j}(x_{i,r}+\tfrac{\hbar}{2})\frac{p_{\tau i}(x_{\tau i,r})}{V_{i,r}(x_{i,r})}d_{i,r}\\
       \label{hiGKLO}
       h_i(z) \mapsto (-1)^{v_i-1}(2z)^{c_{i,\tau i}}(-1)^{\delta_{i\to \tau(i)}}\frac{p_i(-z)p_{\tau i}(z)}{V_i(-z+\frac{\hbar}{2})V_i(-z-\tfrac{\hbar}{2})}\prod_{i\to j}V_{j}(-z)\prod_{\tau i \to j}V_{j}(z)
    \end{gather}
\end{prop}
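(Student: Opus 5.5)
The plan is to deduce \cref{prop:iGKLO} from the iGKLO homomorphism already constructed in \cite{SSX25} (see also \cite{LWW}), rather than checking every defining relation of ${}^{\tau}\mathbf{Y}_{\mu}(\mathfrak{g})$ from scratch. The main step is to match conventions.

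\textbf{Step 1: match conventions.} In \cite{SSX25} the iGKLO map sends $b_i(z)$ and $h_i(z)$ into a ring of $\Sigma$-invariant difference operators on $\C[\hbar][x_{i,j}]$ with $i\in Q_0^{(+)}$. First I would write down the dictionary between our variables and theirs. The identifications $x_{i,j}=-x_{\tau i,j}$ and $d_{i,j}=d_{\tau i,j}^{-1}$ for $i\in Q_0^{(-)}$ are built in precisely so that the formulas can be written uniformly over all of $Q_0$. The orientation from even to odd vertices fixes the factors $\prod_{i\to j}V_j$ and $(-1)^{\delta_{i\to\tau i}}$. The parameters $\bR$ enter only through $p_i$, and the shift by $\tfrac{\hbar}{2}$ reflects a normalization of $x$. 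After this dictionary, \cref{biGKLO} and \cref{hiGKLO} should agree with the formulas of \cite{SSX25} up to two kinds of change.
\begin{itemize}
\item The first is an algebra automorphism $b_{i,s}\mapsto c_i b_{i,s}$, $h_{i,r}\mapsto c_ic_{\tau i}h_{i,r}$, compatible with \cref{eq:bcomm}.
\item The second is a change of the parameters $\zeta$, which is harmless by \cite[Rem.~3.3]{SSX25}.
\end{itemize}
I expect this bookkeeping, especially the signs $(-1)^{v_i-1}$ and $(2z)^{c_{i,\tau i}}$, to be the main obstacle. If the dictionary does not close, I would fall back on the direct verification below.

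\textbf{Step 2: direct verification of the easy relations.} In the direct route, \cref{eq:hcomm} is immediate: the images of the $h_{i,r}$ are multiplication operators, hence commute. The symmetry $h_{i,s}=(-1)^{s+1}h_{\tau i,s}$ should follow by comparing \cref{hiGKLO} for $i$ and $\tau i$ under $z\mapsto -z$, using $V_{\tau i}(z)=\pm V_i(-z)$. Next, $d_{i,r}$ shifts $x_{i,r}$ by $\hbar$ and $x_{\tau i,r}$ by $-\hbar$. From this, $h_j(u)\,(\text{$r$-th summand of } b_i)\,h_j(u)^{-1}$ equals that summand times an explicit rational function of $u-x_{i,r}$. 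Expanding this rational function in $u^{-1}$ gives \cref{eq:hbcomm}, as in the GKLO argument of \cite{KWWY14}.

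\textbf{Step 3: the relations among the $b$'s.} For \cref{eq:bcomm} I would compute $(w-z)[b_i(z),b_j(w)]$. If $j\neq\tau i$, the shifted factors cancel pairwise and the usual GKLO computation gives the result. If $j=\tau i$, the products $d_{i,r}d_{\tau i,r}=1$ collapse. The resulting sum of multiplication operators is then identified with $h$ through a partial fraction decomposition of the rational function in \cref{hiGKLO}: its residues at $z=-x_{i,r}\pm\tfrac{\hbar}{2}$ reproduce the coefficients. This is the core computation. Finally, the Serre relations \cref{eq:commSerre,eq:usualSerre,eq:iSerre} I would not verify by hand. Instead I would quote \cite{SSX25,LWW}, where the Serre relations are shown to follow from the other relations in the image (using faithfulness of the difference-operator realization), and note that the argument transfers verbatim once Step 1 is in place.
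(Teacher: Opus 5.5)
Citing \cite{SSX25} matches the first half of the paper's proof. But your plan only shows that the formulas \cref{biGKLO,hiGKLO} satisfy the defining relations of ${}^{\tau}\mathbf{Y}_{\mu}(\mathfrak{g})$ as difference operators with rational coefficients, i.e.\ that they define an action on $\operatorname{Frac}(P)$. That is exactly what \cite[Thm.~4.2]{SSX25} (or \cite[Thm.~3.5]{LWW2}) supplies, and it is all your direct verification in Steps~2--3 would give. The statement asserts an action on $P^\Sigma$, and you never argue that the operators preserve $P^\Sigma$.

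For the $h_{i,r}$ there is no issue: after expanding in $z^{-1}$ they act by multiplication by elements of $P^\Sigma$. The image of $b_i(z)$, however, has the denominators $V_{i,r}(x_{i,r})=\prod_{k\neq r}(x_{i,r}-x_{i,k})$, and such operators do not preserve $P$. Already for $v_i=2$ and constant numerators, $\bigl(\tfrac{1}{x_{i,1}-x_{i,2}}d_{i,1}+\tfrac{1}{x_{i,2}-x_{i,1}}d_{i,2}\bigr)(x_{i,1})=\tfrac{\hbar}{x_{i,1}-x_{i,2}}$. So your description in Step~1 of the target as difference operators on $\C[\hbar][x_{i,j}]$ is inaccurate, and restricting to $P^\Sigma$ needs its own argument.

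The missing step is the following claim. If $(F_1,\dots,F_{v_i})$ is a $\Sigma$-equivariant family of polynomials, then $\sum_{r}\frac{F_r}{V_{i,r}(x_{i,r})}d_{i,r}$ maps $P^\Sigma$ into $P^\Sigma$. The coefficients of each power of $z^{-1}$ in the numerators of \cref{biGKLO} form such a family.

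The paper proves this claim as in \cite[Thm.~4.6]{KTWWY19}, using the nilHecke identity of \cite[Lem.~4.7]{KTWWY19} (recorded later as \cref{eq:nilHecke-sum}). On $P^\Sigma$ that identity rewrites the operator as $(-1)^{v_i-1}\partial_{i,1}\cdots\partial_{i,v_i-1}\circ F_{v_i}d_{i,v_i}$. This is a composite of divided differences and a polynomial difference operator, so its output is polynomial. Equivariance of the family makes the operator commute with $\Sigma$, so the output is again $\Sigma$-invariant.

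A direct argument also works. On a symmetric input the sum is symmetric, so written over $\prod_{k<l}(x_{i,k}-x_{i,l})$ its numerator is an antisymmetric polynomial, hence divisible by that Vandermonde factor.

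Without this step you have a homomorphism into rational difference operators, not the action on $P^\Sigma$ whose image is used to define $\trust$.
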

\begin{proof}
 By \cite[Thm.~4.2]{SSX25} (see also \cite[Thm.~3.5]{LWW2}), the map above indeed defines an action of ${}^{\tau}\mathbf{Y}_{\mu}(\mathfrak{g})$ on the field of fractions $\operatorname{Frac}(P)$. To complete the proof, it remains to show that this action preserves the subalgebra $P^{\Sigma}$.  
This is proved in the same manner as \cite[Thm.~4.6]{KTWWY19}, using \cite[Lem.~4.7]{KTWWY19}.
\end{proof}

Following \cite{KWWY14,LWW}, we define a “Cartan” series $a_i(z)$ for $i\in Q_0$ by
\begin{gather}
    \label{Aseries}
     h_i(z)= (-1)^{v_i-1}(2z)^{c_{i,\tau i}}(-1)^{\delta_{i\to \tau(i)}}\frac{\prod_{i\to j}(-z)^{v_{j}}a_{j}(-z)\prod_{\tau i \to j}z^{v_{j}}a_{j}(z)}{(z^2-\frac{\hbar^2}{4})a_i(-z+\frac{\hbar}{2})a_i(-z-\tfrac{\hbar}{2})}p_i(-z)p_{\tau i}(z).
\end{gather}
By \cref{prop:iGKLO} and \cref{Aseries}, we obtain that
\begin{corollary} \cite[Lem. 3.9]{LWW2}
     \label{iGKLO2} 
    The action of ${}^\tau Y_\mu $ on $P^\Sigma$ in \cref{prop:iGKLO} sends
    \begin{gather}
           \label{aiGKLO}
       a_i(z) \mapsto z^{-v_i} V_i(z).
    \end{gather}
\end{corollary}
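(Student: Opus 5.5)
The claim is that, under the action of \cref{prop:iGKLO}, the series $a_i(z)$ implicitly defined by \cref{Aseries} acts as $z^{-v_i}V_i(z)$. My plan is to show that $z^{-v_i}V_i(z)$ satisfies the defining relation \cref{Aseries} once $h_i(z)$ is replaced by its image \cref{hiGKLO}, and then to argue that \cref{Aseries} determines the $a_i(z)$ uniquely.

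First, the substitution. I put $a_j(w)=w^{-v_j}V_j(w)$ for every $j\in Q_0$ into the right-hand side of \cref{Aseries}. The numerator factors become
\[
\prod_{i\to j}(-z)^{v_j}(-z)^{-v_j}V_j(-z)=\prod_{i\to j}V_j(-z)
\quad\text{and}\quad
\prod_{\tau i\to j}z^{v_j}z^{-v_j}V_j(z)=\prod_{\tau i\to j}V_j(z),
\]
which are exactly the products in \cref{hiGKLO}. In the denominator,
\[
\Bigl(-z+\tfrac{\hbar}{2}\Bigr)^{-v_i}\Bigl(-z-\tfrac{\hbar}{2}\Bigr)^{-v_i}=\Bigl(z^2-\tfrac{\hbar^2}{4}\Bigr)^{-v_i}.
\]
So the denominator becomes $(z^2-\frac{\hbar^2}{4})^{1-v_i}V_i(-z+\frac{\hbar}{2})V_i(-z-\frac{\hbar}{2})$. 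The prefactor $(-1)^{v_i-1}(2z)^{c_{i,\tau i}}(-1)^{\delta_{i\to\tau i}}p_i(-z)p_{\tau i}(z)$ appears identically in both formulas.

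The main obstacle is a leftover power of $(z^2-\frac{\hbar^2}{4})$, namely a factor of $(z^2-\frac{\hbar^2}{4})^{v_i-1}$, that does not cancel. This is where I expect a bookkeeping issue. One of two things should resolve it. Either the normalization of the $z^{v_j}$ factors in \cref{Aseries} is meant to absorb it, which I would check against \cite[Lem.~3.9]{LWW2} and the type-A analogue in \cite{KWWY14}. Or the mismatch is absorbed by the conventions $x_{i,j}=-x_{\tau i,j}$ relating $V_i$ and $V_{\tau i}$, together with the edges $i\to\tau i$ and the $(2z)^{c_{i,\tau i}}$ factor. I would first test the smallest case $v_i=1$ with no neighbours to fix the convention, then match exponents in general. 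If needed, I would recompute from \cref{hiGKLO} directly with $a_i(z)=z^{-v_i}V_i(z)$, keeping careful track of the signs $(-1)^{v_i}$ produced by $V_i(-w)=(-1)^{v_i}\prod_k(w+x_{i,k})$.

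Second, uniqueness. As formal series in $z^{-1}$ with constant term $1$, the relations \cref{Aseries} for all $i$ determine the $a_i(z)$. One way to see this is to take logarithms: the system becomes a triangular linear system in the coefficients, solvable degree by degree because the operator $a_i(-z+\frac{\hbar}{2})a_i(-z-\frac{\hbar}{2})$ is invertible on leading terms. Alternatively, one can simply cite \cite[Lem.~3.9]{LWW2}, which is exactly this statement. Both steps together give \cref{aiGKLO}.
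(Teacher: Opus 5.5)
Your substitution is correct, and the leftover factor $(z^2-\frac{\hbar^2}{4})^{v_i-1}$ is not a bookkeeping slip on your side. The gap is that the proposal stops there and offers two speculative remedies, neither of which closes the argument.

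The second remedy cannot work. The leftover factor involves none of the $x_{j,k}$. The neighbour products $\prod_{i\to j}V_j(-z)\prod_{\tau i\to j}V_j(z)$ already cancel exactly, so $x_{\tau i,k}=-x_{i,k}$ and the arrows $i\to\tau i$ have nothing left to do. The prefactor $(-1)^{v_i-1}(2z)^{c_{i,\tau i}}(-1)^{\delta_{i\to\tau i}}p_i(-z)p_{\tau i}(z)$ is literally identical on both sides. The first remedy is misdirected for the same reason: the numerator normalizations $(\pm z)^{v_j}$ already cancel. Your proposed test case $v_i=1$ is precisely the one where the discrepancy is invisible.

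The missing step is to recognize that \cref{Aseries} as printed is inconsistent with \cref{hiGKLO}. If each $a_j(z)$ is $1+O(z^{-1})$, the right-hand side of \cref{Aseries} has leading degree in $z$ exactly $2v_i-2$ larger than that of \cref{hiGKLO}. So the printed relation cannot hold in the image whenever $v_i\neq1$. The denominator must carry the exponent $v_i$:
\[
h_i(z)= (-1)^{v_i-1}(2z)^{c_{i,\tau i}}(-1)^{\delta_{i\to\tau i}}\frac{\prod_{i\to j}(-z)^{v_j}a_j(-z)\prod_{\tau i\to j}z^{v_j}a_j(z)}{(z^2-\frac{\hbar^2}{4})^{v_i}\,a_i(-z+\frac{\hbar}{2})\,a_i(-z-\frac{\hbar}{2})}\,p_i(-z)p_{\tau i}(z).
\]
This mirrors the untwisted case of \cite{KWWY14}, where the normalization $u^{-v_i}\prod_k(u-x_{i,k})$ of the Cartan series forces a factor $u^{v_i}(u-\hbar)^{v_i}$ next to $A_i(u)A_i(u-\hbar)$. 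With this exponent your computation is an exact identity. That substitution is the whole of the paper's argument, which simply derives the corollary from \cref{prop:iGKLO} and \cref{Aseries}.

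A smaller point concerns uniqueness, which the paper does not argue but builds into the definition of $a_i(z)$ via \cite{LWW2}. Your reason, that $a_i(-z+\frac{\hbar}{2})a_i(-z-\frac{\hbar}{2})$ is invertible on leading terms, is not sufficient. After taking logarithms, the $z^{-n}$-coefficient of the equation for $i$ also involves the $n$-th coefficients of $\log a_j$ for the neighbours $j$. Moreover, since $h_{\tau i}(z)=h_i(-z)$ by \cref{eq:hcomm}, the equation for $\tau i$ only yields $a_{\tau i}(z)=a_i(-z)$. So at each degree you must invert a matrix of the form $A'\pm2I$, where $A'$ is the adjacency matrix of the graph obtained by folding $\Qui$ along $\tau$. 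This holds in the finite ADE setting of \cite{LWW2}, but is not automatic for arbitrary symmetric Kac--Moody data. Either restrict to that setting or cite \cite[Lem.~3.9]{LWW2} for it.
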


By \cref{prop:iGKLO},  we have a homomorphism (depending on $\bR$)
$$ {}^\tau Y_\mu \longrightarrow \End( P^\Sigma). $$
We define the {\bf truncated shifted iYangian} $ {}^\tau Y^\lambda_\mu = {}^\tau Y^\lambda_\mu(\bR) $ to be the image of $ {}^\tau Y_\mu $ in $ \End(P^\Sigma)$.

\subsection{KLR iYangians and polynomial representations}

We will now introduce a bigger algebra (depending on $ \bR$) inspired by \cite[Sect. 4.4]{KTWWY19}.
\begin{definition}
\label{def:twomirrordiagram}
 A \textbf{double reflective KLR diagram} is a collection of finitely many black curves in $\R \times [0,1]$ with endpoints on the boundary lines $\R \times \{0\}$ and $\R \times \{1\}$, together with two vertical mirrors.  Each curve may carry finitely many dots.  
Moreover, each curve has one endpoint on $\R \times \{0\}$ labelled by some $i \in Q_0$, and one endpoint on $\R \times \{1\}$ labelled by either $i$ or $\tau i$, according to the parity of the number of times the curve touches the mirrors. Diagrams are considered up to isotopy, and are required to be locally of the following forms:
\begin{equation*}
 \begin{tikzpicture}
            [anchorbase]
            \mirror{-2}{0}{1};
            \mirror{-.5}{0}{1};
            \draw (0,0)\botlabel{i}  to[out=up,in=right] (-.5,.5) to[out=right,in=down] (0,1) \toplabel{\tau i};
        \end{tikzpicture}
\qquad
\begin{tikzpicture}[anchorbase,scale=.8]
  \draw (-2,0) +(-.5,-.5) \botlabel{i} -- +(.5,.5) \toplabel{i};
  \draw(-2,0) +(.5,-.5)\botlabel{j} -- +(-.5,.5)\toplabel{j};

 \draw(0,0) +(0,-.5) \botlabel{i} --  +(0,.5)\toplabel{i};

  \draw(2,0) +(0,-.5) \botlabel{i}--  node
  [midway,circle,fill=black,inner sep=2pt]{}
  +(0,.5) \toplabel{i};
\end{tikzpicture}
\qquad
\begin{tikzpicture}
            [anchorbase]
            \rmirror{2}{0}{1};
            \rmirror{.5}{0}{1};
            \draw (0,0)\botlabel{i}  to[out=up,in=left] (.5,.5) to[out=left,in=down] (0,1)\toplabel{\tau i};
        \end{tikzpicture}
\end{equation*}
\end{definition}
As usual, the product of two such diagrams is defined by stacking them on top of each other if their labels match, and is zero otherwise. 

Given any word $\bi\in \word$, we let $e(\bi) $ be the double reflective KLR diagram with vertical strands with these labels in order.  We write $y_k(\bi) $ for the same diagram where the $k$-th strand carries a dot, and $\sigma_k(\bi)$ for the diagram where the $k$-th and ($k+1$)-th strands cross.  (When the labels are clear from the context we will simply write $ y_k$ and $\sigma_k$.). Diagrammatically if $\bi=(i_1,...,i_m)$ then $e(\bi), y_k(\bi)$, and $\sigma_k(\bi)$ are given by:

\begin{equation*}
e(\bi)=\,\begin{tikzpicture}[baseline,scale=1.2]
\mirror{-3.7}{-.5}{.5};
 \draw[-] (-3.4,0) +(0,-.5) -- +(0,.5);
  \draw[-] (-2,0) +(0,-.5) -- +(0,.5);
  \rmirror{-1.7}{-.5}{.5};
  \draw (-3.4,0) +(0.1,-.8) node {\small$i_1$};
  \draw (-2,0) +(0.1,-.8) node {\small$i_m$};
  \draw (-2.65,0) node {$\cdots$};  \end{tikzpicture}
\qquad   y_k(\bi)=\,\begin{tikzpicture}[baseline,scale=1.2]
\mirror{0}{-.5}{.5};
 \draw[-] (.3,0) +(0,-.5) -- +(0,.5);
  \draw[-] (2,0) +(0,-.5) -- +(0,.5);
  \rmirror{2.3}{-.5}{.5};
  \draw (.3,0) +(0.1,-.8) node {\small$i_1$};
  \draw (2,0) +(0.1,-.8) node {\small$i_m$};
  \draw (.75,0) node {$\cdots$};
  \draw (1.2,-.5) -- (1.2,.5);
   \bdot{1.2,0};
     \draw (1.65,0) node {$\cdots$};
      \draw (1.2,0) +(0.1,-.8) node {\small$i_k$};  \end{tikzpicture}
\qquad  \sigma_k(\bi)=\,\begin{tikzpicture}[baseline,scale=1.2]
     \mirror{4}{-.5}{.5};
 \draw[-] (4.3,0) +(0,-.5) -- +(0,.5);
  \draw[-] (6,0) +(0,-.5) -- +(0,.5);
 \rmirror{6.3}{-.5}{.5};
  \draw (4.3,0) +(0.1,-.8) node {\small$i_1$};
  \draw (6,0) +(0.1,-.8) node {\small$i_m$};
  \draw[-] (4.9,-.5)\braidup(5.4,.5);
  \draw[-] (5.4,-.5) \braidup (4.9,.5);
\draw (4.9,0) +(0.1,-.8) node {\small$i_k$};
\draw (5.4,0) +(0.1,-.8) node {\small$i_{k+1}$};
\draw (4.75,0) node {$\cdots$};
\draw (5.72,0) node {$\cdots$};
\end{tikzpicture}
\end{equation*}
We also have left and right reflections
\begin{gather*}
    \rho_L(\bi)=
    \begin{tikzpicture}
        [anchorbase]
        \mirror{0}{0}{1};
        \draw (.5,0) \botlabel{i_1} to [out=up,in=right] (0,.5) to[out=right,in=down] (.5,1) \toplabel{\tau i_1};
        \draw (.9,0) \botlabel{i_2} -- (.9,1);
        \node at (1.35,.5){$\cdots$};
        \draw (1.8,0) \botlabel{i_m} -- (1.8,1);
        \rmirror{2.1}{0}{1};
    \end{tikzpicture},
    \qquad
     \rho_R(\bi)=
    \begin{tikzpicture}
        [anchorbase,xscale=-1]
        \mirror{0}{0}{1};
        \draw (.5,0) \rbotlabel{i_m} to [out=up,in=right] (0,.5) to[out=right,in=down] (.5,1) \rtoplabel{\tau i_m};
        \draw (.9,0)  -- (.9,1);
        \node at (1.35,.5){$\cdots$};
        \draw (1.8,0) \rbotlabel{i_1} -- (1.8,1);
        \rmirror{2.1}{0}{1};
    \end{tikzpicture}.
\end{gather*}
Let
\begin{gather}
\label{shiftX}
    \overline{X}_{ij}(u,v)=
 \begin{cases}
  1 & i \not \leftarrow j,\\
 u-v-\frac{\hbar}{2} & i\leftarrow j.\\
\end{cases} \qquad \overline{f}_{ij}(u,v)=\overline{X}_{ij}(u,v)\overline{X}_{ji}(v,u)=
\begin{cases}
  1, & i \not \leftrightarrow j,\\
 u-v-\frac{\hbar}{2}, & i\leftarrow j,\\
v-u-\frac{\hbar}{2}, & i\to j.
\end{cases}
\end{gather}

\begin{definition}
    \label{def:Ya}
    The KLR iYangian algebra $\tR= \tR(\bR)$ is the quotient of the span of double reflective KLR diagrams by the following local relations:
    \begin{itemize}
        \item the usual KLR relations \cref{dotcross}--\cref{braidA} using $\overline{f}_{ij}(u,v)$,
        \item the  relations \eqref{quadratictwo}-\eqref{reflectiontwo} around mirrors: 
   \begin{gather}
    \label{quadratictwo}
        \begin{tikzpicture}
            [anchorbase,xscale=-1]
            \mirror{.25}{0}{1};
             \draw (0.5,0) \rbotlabel{i_m} to[out=up,in=right] (.25,.25) to[out=right,in=down] (0.5,.5) to [out=up,in=right] (.25,.75) to[out=right,in=down] (0.5,1)\rtoplabel{i_m};
             \draw (1,0) \rbotlabel{i_{m-1}} -- (1,1);
        \node at (1.35,.5){$\cdots$};
        \draw (1.8,0) \rbotlabel{i_1} -- (1.8,1);
        \rmirror{2.1}{0}{1};
        \end{tikzpicture}
        =
         \begin{tikzpicture}
            [anchorbase,xscale=-1]
            \mirror{.25}{0}{1};
             \draw (.5,0) \rbotlabel{i_m} -- (.5,1)\rtoplabel{i_m};
             \draw (1,0) \rbotlabel{i_{m-1}} -- (1,1);
        \node at (1.35,.5){$\cdots$};
        \draw (1.8,0) \rbotlabel{i_1} -- (1.8,1);
        \rmirror{2.1}{0}{1};
        \end{tikzpicture},\\
             \begin{tikzpicture}
        [anchorbase,xscale=-1]
        \mirror{-.25}{0}{1};
       \draw (0,0) \rbotlabel{i} to[out=up,in=-15] (-.25,.2) to [out=15,in=down] (.3,.6) \braidup (0,1) \rtoplabel{\tau i};
        \draw (.3,0) \rbotlabel{j} to[out=up,in=right] (-.25,.5)  to [out=right,in=down] (.3,1) \rtoplabel{\tau j};
        \draw (.9,0) \rbotlabel{i_{m-2}} -- (.9,1);
        \node at (1.35,.5){$\cdots$};
        \draw (1.8,0) \rbotlabel{i_1} -- (1.8,1);
        \rmirror{2.1}{0}{1};
    \end{tikzpicture}
    =
    \begin{tikzpicture}
        [anchorbase,xscale=-1]
        \mirror{-.25}{0}{1};
        \draw (0,0) \rbotlabel{i} \braidup (.3,.4) to [out=up,in=-15] (-.25,.8) to [out=15,in=down] (0,1) \rtoplabel{\tau i};
        \draw (.3,0) \rbotlabel{j} to[out=up,in=right] (-.25,.5)  to [out=right,in=down] (.3,1) \rtoplabel{\tau j};
        \draw (.9,0) \rbotlabel{i_{m-2}} -- (.9,1);
        \node at (1.35,.5){$\cdots$};
        \draw (1.8,0) \rbotlabel{i_1} -- (1.8,1);
        \rmirror{2.1}{0}{1};
    \end{tikzpicture},  \\
 \label{rightdotmove}  \begin{tikzpicture}
            [anchorbase,xscale=-1]
            \mirror{-.5}{0}{1};
            \draw (0,0) \rbotlabel{i} to[out=up,in=right] (-.5,.5) to[out=right,in=down] (0,1)\rtoplabel{\tau i};
            \bdot{-.1,.3};
            \draw (.9,0) \rbotlabel{i_{m-1}} -- (.9,1);
        \node at (1.35,.5){$\cdots$};
        \draw (1.8,0) \rbotlabel{i_1} -- (1.8,1);
        \rmirror{2.1}{0}{1};
        \end{tikzpicture}
        =-\ 
         \begin{tikzpicture}
            [anchorbase,xscale=-1]
            \mirror{-.5}{0}{1};
            \draw (0,0) \rbotlabel{i} to[out=up,in=right] (-.5,.5) to[out=right,in=down] (0,1)\rtoplabel{\tau i};
            \bdot{-.1,.7};
            \draw (.9,0) \rbotlabel{i_{m-1}} -- (.9,1);
        \node at (1.35,.5){$\cdots$};
        \draw (1.8,0) \rbotlabel{i_1} -- (1.8,1);
        \rmirror{2.1}{0}{1};
        \end{tikzpicture},\\
        \begin{tikzpicture}
            [anchorbase]
            \mirror{.25}{0}{1};
             \draw (0.5,0) \botlabel{i_1} to[out=up,in=right] (.25,.25) to[out=right,in=down] (0.5,.5) to [out=up,in=right] (.25,.75) to[out=right,in=down] (0.5,1)\toplabel{i_1};
             \draw (.9,0) \botlabel{i_{2}} -- (.9,1);
        \node at (1.35,.5){$\cdots$};
        \draw (1.8,0) \botlabel{i_m} -- (1.8,1);
        \rmirror{2.1}{0}{1};
        \end{tikzpicture}
        =
        \begin{tikzpicture}
            [anchorbase]
            \mirror{-.25}{0}{1};
             \draw (.6,0) \botlabel{i_1} -- (.6,1)\toplabel{i_1};
             \coupon{.6,.5}{f_{i_1}(X_1)};
             \draw (1.9,0) \botlabel{i_{2}} -- (1.9,1);
        \node at (2.35,.5){$\cdots$};
        \draw (2.8,0) \botlabel{i_m} -- (2.8,1);
        \rmirror{3.1}{0}{1};
        \end{tikzpicture},\\
         \begin{tikzpicture}
            [anchorbase]
            \mirror{-.5}{0}{1};
            \draw (0,0) \botlabel{i_1} to[out=up,in=right] (-.5,.5) to[out=right,in=down] (0,1)\toplabel{\tau i_1};
            \bdot{-.1,.3};
            \draw (.9,0) \botlabel{i_{2}} -- (.9,1);
        \node at (1.35,.5){$\cdots$};
        \draw (1.8,0) \botlabel{i_m} -- (1.8,1);
        \rmirror{2.1}{0}{1};
        \end{tikzpicture}
    \ =\ -
        \ 
         \begin{tikzpicture}
            [anchorbase]
            \mirror{-.5}{0}{1};
            \draw (0,0) \botlabel{i_1} to[out=up,in=right] (-.5,.5) to[out=right,in=down] (0,1)\toplabel{\tau i_1};
            \bdot{-.1,.7};
            \draw (.9,0) \botlabel{i_{2}} -- (.9,1);
        \node at (1.35,.5){$\cdots$};
        \draw (1.8,0) \botlabel{i_m} -- (1.8,1);
        \rmirror{2.1}{0}{1};
        \end{tikzpicture}
      \ +\ \hbar\  \begin{tikzpicture}
            [anchorbase]
            \mirror{-.5}{0}{1};
            \draw (0,0) \botlabel{i_1} to[out=up,in=right] (-.5,.5) to[out=right,in=down] (0,1)\toplabel{\tau i_1};
            \draw (.9,0) \botlabel{i_{2}} -- (.9,1);
        \node at (1.35,.5){$\cdots$};
        \draw (1.8,0) \botlabel{i_m} -- (1.8,1);
        \rmirror{2.1}{0}{1};
        \end{tikzpicture}  ,\\
    \label{reflectiontwo}
    \begin{tikzpicture}
        [anchorbase]
        \mirror{-.25}{0}{1};
       \draw (0,0) \botlabel{i} to[out=up,in=-15] (-.25,.2) to [out=15,in=down] (.3,.6) \braidup (0,1) \toplabel{\tau i};
        \draw (.3,0) \botlabel{j} to[out=up,in=right] (-.25,.5)  to [out=right,in=down] (.3,1) \toplabel{\tau j};
         \draw (.9,0) \botlabel{i_{3}} -- (.9,1);
        \node at (1.35,.5){$\cdots$};
        \draw (1.8,0) \botlabel{i_m} -- (1.8,1);
        \rmirror{2.1}{0}{1};
    \end{tikzpicture}
    -
    \begin{tikzpicture}
        [anchorbase]
        \mirror{-.25}{0}{1};
        \draw (0,0) \botlabel{i} \braidup (.3,.4) to [out=up,in=-15] (-.25,.8) to [out=15,in=down] (0,1) \toplabel{\tau i};
        \draw (.3,0) \botlabel{j} to[out=up,in=right] (-.25,.5)  to [out=right,in=down] (.3,1) \toplabel{\tau j}; \draw (.9,0) \botlabel{i_{3}} -- (.9,1);
        \node at (1.35,.5){$\cdots$};
        \draw (1.8,0) \botlabel{i_m} -- (1.8,1);
        \rmirror{2.1}{0}{1};
    \end{tikzpicture}
    =\delta_{\tau i,j}
        \begin{tikzpicture}
            [anchorbase]
            \mirror{-.8}{0}{1.2};
            \draw (.4,0)\botlabel{i} \braidup (.8,.4) -- (.8,1.2) \toplabel{i};
            \draw (.8,0)\botlabel{j} \braidup (.4,.4) -- (.4,1.2) \toplabel{j};
            \coupon{.6,.7}{\frac{f_j(X_1)-f_i(X_{2})}{-X_1-X_{2}+\hbar}}; 
            \draw (2.1,0) \botlabel{i_{3}} -- (2.1,1);
        \node at (2.45,.5){$\cdots$};
        \draw (2.9,0) \botlabel{i_m} -- (2.9,1);
        \rmirror{3.1}{0}{1};
        \end{tikzpicture},
    \end{gather} 
    where $f_i(u)=p_i(u)p_{\tau i}(-u+\hbar)$.
    \end{itemize}
\end{definition}

Let$$\Pol = \bigoplus_{\substack{m \geq 0, \\ \bi\in Q_0^m }} \Pol_\bi, \quad \Pol_\bi := \C[\hbar][X_1(\bi),\dots, X_m(\bi)]$$
be a direct sum of polynomial rings, one for each word $ \bi$. For $\bi=(\bi_1,\ldots,\bi_m)$, we define
\[
s_0(\bi)=(\tau \bi_1,\bi_2,\ldots,\bi_m),\quad s_k(\bi)=(\ldots,\bi_{k+1},\bi_{k},\ldots) (1\leq k<m) ,\quad s_m(\bi)=(\bi_1,\ldots,\bi_{m-1},\tau \bi_m)
\]
and extend these to automorphisms of $\Pol$ by
\begin{gather}
\label{mirrorpol}
s_0(X_j(\bi))
=
\begin{cases}
-X_1(s_0\bi)+\hbar, & j=1,\\
X_j(s_0\bi), & j>1,
\end{cases}
\qquad
s_m(X_j(\bi))
=
\begin{cases}
-X_m(s_m\bi), & j=m,\\
X_j(s_m\bi), & j<m.
\end{cases}
\end{gather}
and \[
s_k(X_j(\bi))
=
\begin{cases}
X_{k+1}(s_k\bi), & j=k,\\
X_k(s_k\bi), & j=k+1,\\
X_j(s_k\bi), & j\neq k,k+1.
\end{cases}
\]
Whenever we have a word $ \bi $ with $ \bi_k = \bi_{k+1} $, we also define the divided difference operator $ \partial_k : \Pol_\bi \rightarrow \Pol_\bi $ by \[\partial_k =\frac{1}{X_{k+1}(\bi) - X_k(\bi)}(s_k - 1),\quad k\geq 1.\]

\begin{theorem}
\label{thm:tRPol}
    There is a faithful action of $\tR$ on $\Pol$ sending:
    \begin{itemize}
        \item $e(\bi)$ to the projection onto $\Pol_{\bi}$,
         \item  $y_k(\bi)$ to the multiplication operator $X_k(\bi)$,
         \item  \(\sigma_k(\bi) \) to the operator \(
    \begin{cases}
      \partial_k & \bi_k=\bi_{k+1}\\
      s_k \circ \overline{X}_{i_k,i_{k+1}}(X_k(\bi),X_{k+1}(\bi)) &\bi_k\neq \bi_{k+1}
    \end{cases}
\)
\item $\rho_L(\bi)$ to the operator
\[
p_{\tau i_1}\bigl(X_1(s_0\bi)\bigr)s_0,
\] 
\item $\rho_R(\bi)$ to the operator $s_m$,
    \end{itemize}
    for $\bi=(\bi_1,\ldots,\bi_m)$.
\end{theorem}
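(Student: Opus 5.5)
The proof has two parts: first check that the displayed operators satisfy all defining relations of \cref{def:Ya}, so that they define an action; then prove faithfulness by a triangularity argument over the fraction field.

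\emph{Well-definedness.} The relations split into three families.

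The KLR relations \cref{dotcross}--\cref{braidA} with $\overline f_{ij}$ are those of the standard KLR polynomial representation after the change of variables that absorbs the shift $-\tfrac{\hbar}{2}$ in $\overline X_{ij}$. They can be checked exactly as for \cref{Polklrw2}, or as in \cite[Sect.~4.4]{KTWWY19}.

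The right-mirror relations only involve $s_m$, the sign change $X_m\mapsto -X_m$:
\begin{itemize}
\item $s_m^2=1$ gives the first relation in \cref{quadratictwo}.
\item $s_m X_m=-X_m s_m$ gives \cref{rightdotmove}.
\item The right-mirror analogue of \cref{reflection} has zero right-hand side. Both sides equal $s_{m}s_{m-1}s_m$ twisted by $\overline X$-factors. These factors agree because $\tau$ reverses arrows, so $\overline X_{\tau i,\tau j}(-u,-v)$ matches $\overline X_{ij}$ up to the needed swap. This is the same check as in \cref{prop:polyrep} with $\mu=0$.
\end{itemize}

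The left-mirror relations use $\rho_L=p_{\tau i_1}(X_1)s_0$ with $s_0X_1=(-X_1+\hbar)s_0$:
\begin{itemize}
\item $\rho_L^2=p_{\tau i_1}(X_1)\,p_{i_1}(-X_1+\hbar)$, which should be $f_{i_1}(X_1)$ up to the relabelling $p_{i}\leftrightarrow p_{\tau i}$ built into the definition of $f_i$.
\item The dot relation is immediate from $s_0X_1=(\hbar-X_1)s_0$.
\end{itemize}
For \cref{reflectiontwo} I will compute both sides on $\Pol_{(i,j,\dots)}$ as elements of the twisted group algebra $\operatorname{Frac}(\Pol)\rtimes SS_m$. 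Each side has the form $A\,s_0s_1s_0 + B\,s_1s_0s_1$ plus lower terms. In the braid part the two sides agree; this uses that the shifted reflection $X\mapsto \hbar-X$ conjugates $\overline X$ to itself. If $j\neq\tau i$ the lower terms cancel. If $j=\tau i$, the divided difference coming from $\partial_1$ leaves a residual term proportional to
\[
\frac{p_{\tau i}(X_1)p_i(\hbar-X_2)-\cdots}{-X_1-X_2+\hbar}\,s_1 ,
\]
and this must be matched with the right-hand side.

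\emph{Main obstacle.} I expect this $j=\tau i$ case of \cref{reflectiontwo} to be the hardest step, together with tracking the conventions for which of $p_i,p_{\tau i}$ appears and the $\hbar$-shifts. My plan is to reduce it to \cref{prop:polyrep}: the affine change of variables $X_k\mapsto X_k-\tfrac{\hbar}{2}$ turns $s_0$ into the honest sign change $f^{s_0}$ and turns $\overline X$ into $X$. Up to replacing the monomial $(-Y_1)^{\mu_i}$ by the polynomial $p_{\tau i}$, this matches the computation done there (cf.\ \cite[Prop.~2.7]{Prz23}). That argument only used that the mirror coefficient is a polynomial in $Y_1$, with the relation's right-hand side being the corresponding divided difference.

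\emph{Faithfulness.} I will mimic \cref{oklrwbasis,cor:faithfulpolyrep}.
\begin{enumerate}
\item Spanning: as in \cref{lem:spanpart}, $\tR$ is spanned by elements $D_w y^{\mathbf a}e(\bi)$. Here $w$ runs over the affine type $C$ Weyl group generated by $s_0,s_1,\dots,s_m$; this is the group generated by reflections in the two mirrors, and $D_w$ is a chosen reduced diagram. The argument is the same induction on crossings and mirror touches, moving dots to the bottom modulo fewer crossings.
\item Triangularity: over $K=\operatorname{Frac}\C[\hbar][X]$, each $D_w$ acts as $q_w w+\sum_{u<w}q_{u,w}u$ with $q_w\neq0$. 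This holds because every generator contributes a nonzero coefficient; in particular the coefficient of $\rho_L$ is $p_{\tau i_1}$, which is nonzero.
\item Independence: the elements of this infinite group act as distinct automorphisms of $K$, since $s_0s_m$ acts by the translation $X\mapsto X+\hbar$ and so the action is faithful. By Dedekind independence they are linearly independent over $K$.
\end{enumerate}
Hence a Bruhat-maximal term of any nonzero combination survives. This gives linear independence of the spanning set and faithfulness simultaneously.
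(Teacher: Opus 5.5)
\textbf{Overall route.} Your route matches the paper's. You check the relations of \cref{def:Ya} on the displayed operators, span $\tR$ by reduced diagrams indexed by the affine type $C$ Weyl group generated by $s_0,\dots,s_m$, and deduce faithfulness from triangularity over the fraction field plus Dedekind independence. The faithfulness half is fine.

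\textbf{The gap.} It lies in well-definedness, exactly at the step you flagged as the main obstacle. Your proposed shortcut does not work. The shift $X_k\mapsto X_k-\frac{\hbar}{2}$ does make $s_0$ an honest sign change. However, $\overline X_{ij}(u,v)=u-v-\frac{\hbar}{2}$ depends only on $u-v$, so the shift leaves it unchanged rather than turning it into $X_{ij}$. The shift also turns $s_m$ into $Y_m\mapsto -Y_m-\hbar$. The KLR relations survive anyway, because the standard check works for arbitrary polynomials $\overline X_{ij}$.

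So the case $j=\tau i$ of \cref{reflectiontwo} must be computed directly. On $\Pol_{(i,\tau i)}$ the factor $\overline X_{i,\tau i}(X_2,X_1)=\overline X_{i,\tau i}(\hbar-X_1,\hbar-X_2)$ pulls out of both composites, and one finds
\[
\sigma_1\rho_L\sigma_1\rho_L-\rho_L\sigma_1\rho_L\sigma_1=\frac{f_{\tau i}(X_1)-f_i(X_2)}{X_1+X_2-\hbar}\,\sigma_1 .
\]
This is the negative of the printed right-hand side.

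A minimal example shows it. Take no arrows, $p_i=1$ and $p_{\tau i}(u)=u-c\hbar$, and apply both sides to $1\in\Pol_{(i,\tau i)}$. The first composite reduces to $\partial_1(X_1-c\hbar)=1$, and the second to $\partial_1(1)=0$. The printed coupon, however, evaluates to $\frac{X_1+X_2-\hbar}{-X_1-X_2+\hbar}=-1$.

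So your claim that this case ``matches the computation'' of \cref{prop:polyrep} is false for the relation as written. The displayed operators define an action only after flipping the sign in \cref{reflectiontwo}, i.e.\ using denominator $X_1+X_2-\hbar$. The paper's one-line verification (``follow directly from \cref{mirrorpol}'') misses this too. A complete proof has to carry out this computation and record the correction.

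\textbf{Smaller slips.}
\begin{enumerate}
\item In $\rho_L^2$ you swapped the labels. The second reflection acts on $s_0\bi$, whose first letter is $\tau i_1$, so its coefficient is $p_{i_1}$. Hence
\[
\rho_L(s_0\bi)\rho_L(\bi)=p_{i_1}(X_1)\,p_{\tau i_1}(\hbar-X_1)=f_{i_1}(X_1)
\]
exactly. The product you wrote is $f_{\tau i_1}(X_1)$. If it were correct, ``up to relabelling'' would hide a genuine failure of \cref{quadratictwo}, not a convention.
\item The braid relations at the two mirrors have length four: $s_1s_0s_1s_0=s_0s_1s_0s_1$ and $s_{m-1}s_ms_{m-1}s_m=s_ms_{m-1}s_ms_{m-1}$. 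The leading terms you compare should be these, not words of length three.
\end{enumerate}
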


\begin{proof}
The KLR relations are verified exactly as in
\cref{prop:polyrep}, after omitting the red strands. The relations
involving the two mirrors follow directly from \cref{mirrorpol} and
the definitions of the operators \(\rho_L\) and \(\rho_R\). Hence the
displayed formulas define an action of \(\tR\) on \(\Pol\).

It remains to prove faithfulness. The argument is similar to
  \cite[Thm.~4.20]{KTWWY19}. Filter
\(\tR\) by the length of the underlying word in
\[
    s_0,s_1,\ldots,s_{m-1},s_m
\]
after forgetting the dots. The defining relations imply that
\(\tR\) is spanned by reduced diagrams with polynomials in the dots
placed at the bottom.

After extending scalars to the fraction field of \(\Pol\), the action
of a reduced diagram with underlying affine Weyl group element \(w\)
has the form
\[
    q_w w
    +
    \sum_{\ell(u)<\ell(w)} q_u u,
    \qquad
    q_w\neq 0,
\]
where the coefficients are rational functions. Indeed, every
crossing or mirror reflection has a nonzero leading coefficient
multiplying the corresponding simple reflection. Since distinct
affine Weyl group elements act as linearly independent automorphisms
of the fraction field, no nonzero linear combination of reduced
diagrams can act by zero. Therefore the polynomial representation is
faithful.
\end{proof}

\begin{remark}
    The framing in \cref{def:Ya} is one-sided: all framing parameters are attached to the left mirror, while the right mirror is unframed. In fact, one may distribute the framing data between the two mirrors and define a two-sided analogue of $\tR$ and its polynomial representation.
\end{remark}

\subsection{Diagrammatic iGKLO homomorphisms}
\label{sec:diagiGKLO}
Recall that we fixed $\lambda$ such that ${}^\tau\lambda-\mu=\sum_{i\in Q_0}v_i\alpha_i$ where $v_i=v_{\tau i}$ for any $i\in Q_0$. 

\newcommand{\NH}{\operatorname{NH}_\aleph}

We now define a sequence $\aleph$ by listing the vertices of $Q_0^{(+)}$ according to the chosen total order (from smallest to largest), with each vertex $i$ appearing exactly $v_i$ times.  
Let $e(\aleph)$ denote the corresponding idempotent in $\tR$.
By \cref{thm:tRPol}, we can view 
\(
e(\aleph)\tR e(\aleph)
\)
as a subalgebra of the endomorphism ring of $\Pol_{\aleph}$. Suppose $m=\sum_{i\in Q_0^{(+)}}v_i=\sum_{i\in Q_0^{(-)}}v_{i}$. Using the chosen total order on $Q_0^{(+)}$, we identify the variables $x_{i,k}\ (i\in Q_0^{(+)},1\leq k \leq v_i)$ with the variables $ X_1(\aleph), \ldots, X_m(\aleph)$. Using the above, we may identify $\Pol_{\aleph}$ with $P$. The algebra $e(\aleph) \tR e(\aleph)$ naturally contains a tensor product $\bigotimes_i \operatorname{NH}_{v_i}$ of nilHecke algebras, which we denoted by $\NH$. 

By \cite[Prop. 4.15]{KTWWY19}, $\End_{P^\Sigma} P $ contains the projection operator onto the subspace $ P^\Sigma $.  This projection is given by
\begin{equation}
\label{symidempotent}
e_{\Sigma} := \prod_{i\in Q_0^{(+)}} \frac{1}{v_i !}\partial_{i, w_0} \prod_{1 \le k < l \le v_i} (x_{i,k} - x_{i,l} )
\end{equation}
where $ \partial_{i,w_0} $ denotes the product of the divided difference operators $ \partial_{i,r} $ corresponding to the longest element in $ \Sigma_{v_i} $. 

For each $i\in Q_0^{(+)}$, we define the following elements diagrammatically in $e(\aleph)\tR e(\aleph)$:
\begin{gather}
    \label{Di}
    D_{i}:=
    \begin{tikzpicture}
        [anchorbase,scale=1.4]
        \mirror{-.1}{0}{1};
        \draw[-] (.125,0) -- (.125,1)  \toplabel{}
                 (.5,0) -- (.5,1)
                 (.825,0) -- (.825,1)
                 (1.2,0) -- (1.2,1)
                 (1.5,0) -- (1.5,1)
                 (1.9,0) -- (1.9,1);
        \rmirror{2.125}{0}{1};
        \draw[-] (0.625,0) -- (-.1,1/4) -- (2.125,2/3) -- (0.625,1);
        \node at (.3125,.5){$\scriptstyle \cdots$};
         \node at (1.0125,.5){$\scriptstyle \cdots$};
          \node at (1.7,.5){$\scriptstyle \cdots$};
           \draw [decorate,decoration={brace,amplitude=5pt}]  (.5,-.05) -- (.125,-.05); \node at (.625/2,-.35){$\scriptstyle <i$};
           \draw [decorate,decoration={brace,amplitude=5pt}]  (1.2,-.05) -- (.625,-.05); \node at (1.825/2,-.35){$\scriptstyle i$};
           \draw [decorate,decoration={brace,amplitude=5pt}]  (1.9,-.05) -- (1.5,-.05); \node at (1.7,-.35){$\scriptstyle >i$};
    \end{tikzpicture}
,\qquad
    D_{\tau i}:=
       \begin{tikzpicture}
        [anchorbase,scale=1.4]
        \mirror{-.1}{0}{1};
        \draw[-] (.125,0) -- (.125,1)  \toplabel{}
                 (.5,0) -- (.5,1)
                 (.825,0) -- (.825,1)
                 (1.2,0) -- (1.2,1)
                 (1.5,0) -- (1.5,1)
                 (1.9,0) -- (1.9,1);
        \rmirror{2.125}{0}{1};
        \draw[-] (1.375,0) -- (2.125,1/4) -- (-.1,2/3) -- (1.375,1);
        \node at (.3125,.5){$\scriptstyle \cdots$};
         \node at (1.0125,.5){$\scriptstyle \cdots$};
          \node at (1.7,.5){$\scriptstyle \cdots$};
           \draw [decorate,decoration={brace,amplitude=5pt}]  (.5,-.05) -- (.125,-.05); \node at (.625/2,-.35){$\scriptstyle <i$};
           \draw [decorate,decoration={brace,amplitude=5pt}]  (1.375,-.05) -- (.8,-.05); \node at (2.175/2,-.35){$\scriptstyle i$};
           \draw [decorate,decoration={brace,amplitude=5pt}]  (1.9,-.05) -- (1.5,-.05); \node at (1.7,-.35){$\scriptstyle >i$};
    \end{tikzpicture}
\end{gather}
where the braces at the bottom denote labels from $Q_0^{(+)}$, appearing in the order of $\aleph$.

For \(i\in Q_0^{(+)}\), set
\[
\delta_i:=\delta_{i\to\tau i},
\qquad
\epsilon_i:=
v_i+\sum_{j\to\tau i}v_j-\delta_i.
\]
\begin{theorem} (cf. \cite[Thm. 4.22]{KTWWY19})
\label{lambsh}
As subalgebras of $\End(P)$, we have \[\trust\subset e(\aleph)\tR e(\aleph),\] where
    the elements $b_{i,r}$ of ${}^\tau Y_\mu^{\lambda}$ correspond to elements of $\tR$ as follows:
    \begin{gather}
\label{biimage}
b_{i,r}
=
(-1)^{r+\epsilon_i}
D_i
\left(
y_{1+\sum_{j<i}v_j}-\frac{\hbar}{2}
\right)^r
e_\Sigma,
\qquad i\in Q_0^{(+)},
\\
\label{btiimage}
b_{\tau i,r}
=
-
D_{\tau i}
\left(
y_{\sum_{j\leq i}v_j}+\frac{\hbar}{2}
\right)^r
e_\Sigma,
\qquad \tau i\in Q_0^{(-)}.
\end{gather}
\end{theorem}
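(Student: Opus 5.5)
Both sides act on $P=\Pol_\aleph$ (with $x_{i,k}$ identified with $X_1(\aleph),\dots,X_m(\aleph)$), and $\trust$ is by definition the image of ${}^\tau\mathbf{Y}_\mu$ in $\End(P^\Sigma)$. Since $e_\Sigma$ is the projection onto $P^\Sigma$, I would regard $\trust$ inside $\End(P)$ by precomposing with $e_\Sigma$. Because the polynomial representation of \cref{thm:tRPol} is faithful, it then suffices to check two things: that the right-hand sides of \cref{biimage} and \cref{btiimage} act on $P$ by the same operators as the iGKLO images \cref{biGKLO} composed with $e_\Sigma$, and that the images of the $h_{i,r}$ also lie in $e(\aleph)\tR e(\aleph)$. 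As in \cite[Thm.~4.22]{KTWWY19}, I would note that ${}^\tau\mathbf{Y}_\mu$ is generated by the $b_{i,r}$ together with finitely many low $h_{i,r}$, and that the $h_{i,r}$ are recovered from \cref{eq:bcomm} with $j=\tau i$. Hence the $b$'s suffice, provided $e_\Sigma$ itself lies in the corner. It does: by \cref{symidempotent} it is built from dots and same-color crossings, i.e. from $\NH\subset e(\aleph)\tR e(\aleph)$.

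The main computation is the action of $D_i$ for $i\in Q_0^{(+)}$. Read bottom to top, $D_i$ does three things to the first strand of the $i$-block:
\begin{enumerate}
\item it crosses leftward over the strands of color $<i$;
\item it reflects off the left mirror, becoming $\tau i$; this contributes the factor $p_{i}$ from the $\rho_L$ formula, and $s_0$ sends $X\mapsto -X+\hbar$;
\item it crosses all the way right to the right mirror, reflects back to color $i$ via $s_m$, and finally crosses left to its original slot.
\end{enumerate}
Composing the operators from \cref{thm:tRPol}, the permutation parts combine to the shift $d_{i,1}$: the two reflections produce $x\mapsto -(-x+\hbar)=x-\hbar$ up to orientation and sign conventions, so the net effect is a shift by $\pm\hbar$. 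The $\overline{X}$-factors from crossings with strands of adjacent colors produce the products $V_j(x_{i,1}+\hbar/2)$ over $i\to j$ and $V_{\tau i,1}$. Here the $\hbar/2$ shifts in \cref{shiftX} are exactly what convert $X$-variables into the arguments $x\pm\hbar/2$. Next, $e_\Sigma$ lets me symmetrize over $\Sigma_{v_i}$. Using the standard identity $\sum_r\frac{g(x_r)}{V_{i,r}(x_r)}d_{i,r}\circ e_\Sigma=$ (a nilHecke expression applied to the first strand) from \cite[Lem.~4.7]{KTWWY19}, the same-color factor $1/V_{i,r}(x_{i,r})$ and the sum over $r$ in \cref{biGKLO} emerge. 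Expanding $\frac{1}{-z-x-\hbar/2}$ in $z^{-1}$ gives the coefficient $(-1)^{r+1}(x+\hbar/2)^r$, or after the shift $(y-\hbar/2)^r$. This is matched with the dot polynomial in \cref{biimage}, and the accumulated signs from the $-X$'s in $s_m$ and from orientation are collected into $(-1)^{\epsilon_i}$. The case $\tau i$ in \cref{btiimage} is symmetric: the last strand of the $i$-block goes right first and then left, and the leading sign is $-1$.

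I expect the main obstacle to be the bookkeeping in this composite. Specifically, one must verify that every crossing with a strand of color $j$ contributes a factor at precisely the right shifted argument, that strands crossed twice (once in each direction) contribute $V_j$ evaluated at both $x$ and its reflected image in the right combination, and that the overall sign is $(-1)^{\epsilon_i}$. Here the bipartite orientation (no arrows between even vertices, and all arrows pointing from even to odd) should ensure that each relevant $\overline{X}$ is nontrivial exactly once. I would first do the computation in the case where $\aleph$ has a single color, to pin down the signs and shifts, and then insert the other blocks, whose crossings contribute only $V_j$ factors.
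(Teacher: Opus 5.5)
Your computation of the $b_{i,r}$ and $b_{\tau i,r}$ follows the paper's argument: compute the action of $D_i$ and $D_{\tau i}$ via \cref{thm:tRPol}, symmetrize with \cite[Lem.~4.7]{KTWWY19}, and match the coefficient of $z^{-r-1}$ in \cref{biGKLO}. The gap is in the Cartan part.

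You note that ${}^\tau\mathbf{Y}_\mu$ needs finitely many low $h_{i,r}$ as generators besides the $b_{i,r}$, but then conclude that the $b$'s suffice. That does not follow.
\begin{itemize}
\item Relation \cref{eq:bcomm} with $j=\tau i$ and $r,s\geq 0$ only produces $h_{\tau i,t}$ with $t\geq 1$. When $c_{i,\tau i}=0$, \cref{eq:commSerre} extends this to $t\geq 0$.
\item The non-scalar generators $h_{i,r}$ start at $r=-(\alpha_i,\mu)$, which is negative as soon as $(\alpha_i,\mu)>0$ (e.g.\ $\mu={}^\tau\lambda$ with $\lambda_i+\lambda_{\tau i}>0$).
\end{itemize}
Your argument never places these low $h_{i,r}$ in $e(\aleph)\tR e(\aleph)$. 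The missing observation is how the paper handles all Cartan generators at once. By \cref{hiGKLO} (equivalently \cref{aiGKLO}), every $h_{i,r}$ acts on $P^\Sigma$ by multiplication by an element of $P^\Sigma$, because expanding $V_i(-z\pm\tfrac{\hbar}{2})^{-1}$ in $z^{-1}$ gives symmetric polynomial coefficients. Hence $h_{i,r}e_\Sigma\in Pe_\Sigma\subset\operatorname{NH}_\aleph\subset e(\aleph)\tR e(\aleph)$, and the detour through \cref{eq:bcomm} is unnecessary.

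There are also three smaller slips in the main computation.
\begin{itemize}
\item The left reflection of the $i$-labelled strand contributes $p_{\tau i}$, not $p_i$. By \cref{thm:tRPol}, $\rho_L(\bi)$ acts by $p_{\tau i_1}(X_1(s_0\bi))s_0$, which matches the factor $p_{\tau i}(x_{\tau i,r})$ in \cref{biGKLO}.
\item Applying $s_0$ and then $s_m$ to a polynomial gives $f(X)\mapsto f(-X+\hbar)\mapsto f(X+\hbar)$, which is exactly $d_{i,a}$. This is the direction needed for $d_{i,a}(x_{i,a}-\tfrac{\hbar}{2})^r=(x_{i,a}+\tfrac{\hbar}{2})^r d_{i,a}$, so no $\pm$ ambiguity remains.
\item The sign $(-1)^{\epsilon_i}$ does not come from orientation. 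Only the middle pass, where the moving strand carries the odd label $\tau i$, produces $\overline{X}$-factors. Your worry about doubly crossed strands goes away because the even-labelled passes meet only even strands, giving trivial factors or divided differences. For $j\to\tau i$, $j\neq i$, you rewrite $\prod_k(x_{\tau i,a}-x_{j,k}-\tfrac{\hbar}{2})=(-1)^{v_j}V_{\tau j}(x_{i,a}+\tfrac{\hbar}{2})$. When $i\to\tau i$, the remaining $i$-strands give $(-1)^{v_i-1}V_{\tau i,a}(x_{i,a}+\tfrac{\hbar}{2})$. Together these give $(-1)^{\epsilon_i-v_i}$. Combined with the $(-1)^{v_i-1}$ from the nilHecke identity and the prefactor $(-1)^{r+\epsilon_i}$, this yields the required $(-1)^{r+1}$.
\end{itemize}
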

\begin{proof}
We identify $\trust$ with its action on $P^\Sigma$ and regard
$\Hom(P^\Sigma,P)$ as the subspace $\End(P)e_\Sigma$ of $\End(P)$.
We verify \cref{biimage,btiimage} by comparing the corresponding
operators on $P^\Sigma$.

For $1\leq a\leq v_i$, define
\[
G_{i,a}
:=
p_{\tau i}(x_{\tau i,a})
\left(
\prod_{i\to\tau i}
V_{\tau i,a}
\left(x_{i,a}+\frac{\hbar}{2}\right)
\right)
\left(
\prod_{\substack{j\neq\tau i\\i\to j}}
V_j
\left(x_{i,a}+\frac{\hbar}{2}\right)
\right).
\]

We first compute the polynomial action of $D_i$. In the middle part of
the diagram, the distinguished strand is labelled by $\tau i$ and
crosses the strands labelled by $j\in Q_0^{(+)}$. If $j\neq i$ and
$j\to\tau i$, then
\[
\prod_{k=1}^{v_j}
\overline X_{\tau i,j}
(x_{\tau i,a},x_{j,k})
=
\prod_{k=1}^{v_j}
\left(
x_{\tau i,a}-x_{j,k}-\frac{\hbar}{2}
\right)
=
(-1)^{v_j}
V_{\tau j}
\left(x_{i,a}+\frac{\hbar}{2}\right).
\]
Under the substitution $j\mapsto\tau j$, these are precisely the
factors
\[
V_j\left(x_{i,a}+\frac{\hbar}{2}\right),
\qquad
j\neq\tau i,\quad i\to j.
\]
If $i\to\tau i$, the crossings with the remaining $i$-labelled strands
give
\[
\prod_{\substack{1\leq k\leq v_i\\k\neq a}}
\overline X_{\tau i,i}
(x_{\tau i,a},x_{i,k})
=
(-1)^{v_i-1}
V_{\tau i,a}
\left(x_{i,a}+\frac{\hbar}{2}\right).
\]
Thus the total sign arising from the middle crossings is
\(
(-1)^{\epsilon_i-v_i}.
\)

By the polynomial action in \cref{thm:tRPol}, the two
reflections contribute
\[
p_{\tau i}(x_{\tau i,a})d_{i,a}.
\]
Moreover, we have
\[
d_{i,a}
\left(x_{i,a}-\frac{\hbar}{2}\right)^r
=
\left(x_{i,a}+\frac{\hbar}{2}\right)^r d_{i,a}.
\]
The crossings between equally labelled $i$-strands in the upper part
of $D_i$ give the divided differences
$\partial_{i,1}\cdots\partial_{i,v_i-1}$. Therefore,
\begin{align}
\label{eq:Di-action}
D_i
\left(
y_{1+\sum_{j<i}v_j}-\frac{\hbar}{2}
\right)^r e_\Sigma
=
(-1)^{\epsilon_i-v_i}
\partial_{i,1}\cdots\partial_{i,v_i-1}
\left(
\left(x_{i,v_i}+\frac{\hbar}{2}\right)^r
G_{i,v_i}d_{i,v_i}
\right)e_\Sigma.
\end{align}

We recall a standard nilHecke identity
\begin{equation}
\label{eq:nilHecke-sum}
\partial_{i,1}\cdots\partial_{i,v_i-1}
\left(F_{v_i}d_{i,v_i}\right)e_\Sigma
=
(-1)^{v_i-1}
\sum_{a=1}^{v_i}
\frac{F_a}{V_{i,a}(x_{i,a})}
d_{i,a}e_\Sigma,
\end{equation}
where $(F_1,\ldots,F_{v_i})$ is equivariant under the natural
$\Sigma_{v_i}$-action; see \cite[Lem.~4.7]{KTWWY19}. Applying
\cref{eq:nilHecke-sum} to \cref{eq:Di-action}, we obtain
\begin{align*}
D_i
\left(
y_{1+\sum_{j<i}v_j}-\frac{\hbar}{2}
\right)^r e_\Sigma
=
(-1)^{\epsilon_i+v_i-1}
\sum_{a=1}^{v_i}
\left(x_{i,a}+\frac{\hbar}{2}\right)^r
\frac{G_{i,a}}{V_{i,a}(x_{i,a})}
d_{i,a}e_\Sigma.
\end{align*}
Consequently,
\begin{align*}
(-1)^{
r+\epsilon_i
}
D_i
\left(
y_{1+\sum_{j<i}v_j}-\frac{\hbar}{2}
\right)^r e_\Sigma
=
(-1)^{r+1}
\sum_{a=1}^{v_i}
\left(x_{i,a}+\frac{\hbar}{2}\right)^r
\frac{G_{i,a}}{V_{i,a}(x_{i,a})}
d_{i,a}e_\Sigma,
\end{align*}
since
\[
(r+\epsilon_i)+(\epsilon_i-v_i+v_i-1)
\equiv r+1\pmod 2.
\]
The right-hand side is exactly the coefficient of $z^{-r-1}$ in
\cref{biGKLO}. This proves \cref{biimage}.

We now prove \cref{btiimage}. Applying \cref{biGKLO} with the vertex
$\tau i$, all products over outgoing arrows are empty, since
$\tau i\in Q_0^{(-)}$. Using
\[
x_{\tau i,a}=-x_{i,a},
\qquad
d_{\tau i,a}=d_{i,a}^{-1},
\qquad
V_{\tau i,a}(x_{\tau i,a})
=
(-1)^{v_i-1}V_{i,a}(x_{i,a}),
\]
we obtain
\begin{equation}
\label{eq:bti-explicit}
b_{\tau i,r}
=
(-1)^{v_i}
\sum_{a=1}^{v_i}
\left(x_{i,a}-\frac{\hbar}{2}\right)^r
\frac{p_i(x_{i,a})}{V_{i,a}(x_{i,a})}
d_{i,a}^{-1}.
\end{equation}

On the other hand, in the diagram $D_{\tau i}$ the crossings in the
middle segment contribute no polynomial factors. Indeed, they are of
the form $\overline X_{j,\tau i}$ with $j\in Q_0^{(+)}$, and there are
no arrows $\tau i\to j$. The two reflections contribute
$p_i(x_{i,a})d_{i,a}^{-1}$. Furthermore,
\[
d_{i,a}^{-1}
\left(x_{i,a}+\frac{\hbar}{2}\right)^r
=
\left(x_{i,a}-\frac{\hbar}{2}\right)^r
d_{i,a}^{-1}.
\]
The same nilHecke calculation as above gives
\[
D_{\tau i}
\left(
y_{\sum_{j\leq i}v_j}+\frac{\hbar}{2}
\right)^r e_\Sigma
=
(-1)^{v_i-1}
\sum_{a=1}^{v_i}
\left(x_{i,a}-\frac{\hbar}{2}\right)^r
\frac{p_i(x_{i,a})}{V_{i,a}(x_{i,a})}
d_{i,a}^{-1}e_\Sigma.
\]
Comparing this with \cref{eq:bti-explicit} yields
\[
b_{\tau i,r}
=
-
D_{\tau i}
\left(
y_{\sum_{j\leq i}v_j}+\frac{\hbar}{2}
\right)^r e_\Sigma,
\]
which proves \cref{btiimage}.

Finally, by \cref{aiGKLO}, the coefficients of the series $a_i(z)$ act
by multiplication by elements of $P^\Sigma$, and hence belong to
\(
\NH\subset e(\aleph)\tR e(\aleph).
\)
Together with \cref{biimage,btiimage}, this shows that the images of
the generators of $\trust$ lie in $e(\aleph)\tR e(\aleph)$. This concludes the proof.
\end{proof}

\begin{remark}
We note that, in \cite[Theorem~4.22]{KTWWY19}, the analogue of this inclusion was shown to be an equality for quiver gauge theories by passing through the geometry of Coulomb branches.  
In our setting, however, such an approach is not currently available, since it remains open whether the iGKLO representation in \cref{prop:iGKLO} is full.  
Instead, in order to study weight modules for the flag truncated shifted iYangian, we later realize it as a subalgebra of $e(\aleph)\tR e(\aleph)$ with explicit generators in \cref{thm:flag=ab}.
\end{remark}

\subsection{Flag truncated shifted iYangian}

Motivated by \cite[\S 4.3]{KTWWY19}, we define the flag truncated shifted iYangian
\[
{}^\tau FY^\lambda_\mu = {}^\tau FY^\lambda_\mu(\bR)
\]
to be the subalgebra of $\End(P)$ generated by the divided difference operators, multiplication by elements of $P$, and the image of ${}^\tau Y^\lambda_\mu(\bR)$.  

\begin{theorem}
\label{thm:morita}
    There is a Morita equivalence between $\ftrust$ and $\trust$.
\end{theorem}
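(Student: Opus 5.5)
The plan is to follow the proof of the corresponding Morita equivalence in \cite[\S 4.3]{KTWWY19}, using the idempotent $e_\Sigma$ from \cref{symidempotent}. By \cite[Prop.~4.15]{KTWWY19}, $e_\Sigma\in\End(P)$ is the projection of $P$ onto $P^\Sigma$. It is built from the divided difference operators $\partial_{i,r}$ and multiplication by elements of $P$, so $e_\Sigma\in\ftrust$. I will prove the two standard statements:
\[
e_\Sigma\,\ftrust\, e_\Sigma\cong \trust,
\qquad
\ftrust\, e_\Sigma\,\ftrust=\ftrust .
\]
Together these give the Morita equivalence, with progenerator $\ftrust e_\Sigma$.

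For the first statement, I view $e_\Sigma\,\End(P)\,e_\Sigma$ as $\End(P^\Sigma)$ via restriction to $P^\Sigma=e_\Sigma P$. Elements of $\trust$ are operators on $P^\Sigma$. I extend them to $P$ by precomposing with $e_\Sigma$, exactly as in the proof of \cref{lambsh}, where $\Hom(P^\Sigma,P)$ was identified with $\End(P)e_\Sigma$. This gives an embedding $\trust\hookrightarrow e_\Sigma\ftrust e_\Sigma$.

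The reverse inclusion is the real content. A word in the generators of $\ftrust$ is a product of divided differences, multiplications by $f\in P$, and elements $y\,e_\Sigma$ with $y\in\trust$. I need every such word, sandwiched between two copies of $e_\Sigma$, to lie in $\trust$. I would argue as in \cite{KTWWY19}. Each $y\in\trust$ preserves $P^\Sigma$, so $y e_\Sigma=e_\Sigma y e_\Sigma$, and $e_\Sigma$ can be inserted between consecutive Yangian factors. This reduces the problem to showing that $e_\Sigma\, a\, e_\Sigma$ lies in $\trust$ for every $a\in P\rtimes\NH$, where $\NH$ is the nilHecke algebra generated by divided differences and polynomials. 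Since $e_\Sigma(P\rtimes\NH)e_\Sigma\cong P^\Sigma$, it then suffices to show that multiplication by every element of $P^\Sigma$ lies in $\trust$. This follows from \cref{iGKLO2}: the coefficients of $a_i(z)$ act by the elementary symmetric functions in $x_{i,1},\dots,x_{i,v_i}$, and these generate $P^\Sigma$ over $\C[\hbar]$.

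For the second statement, I only need that $1\in \ftrust e_\Sigma\ftrust$. This already holds inside the subalgebra $P\rtimes\NH$, which is the nilHecke algebra $\bigotimes_i \mathrm{NH}_{v_i}$ with polynomial coefficients. That algebra is a matrix algebra over $P^\Sigma$, and $P$ is free over $P^\Sigma$ with a Schubert basis. Concretely, following \cite[Prop.~4.15]{KTWWY19}, I write $1=\sum_k a_k e_\Sigma b_k$ with $a_k$ Schubert polynomials and $b_k$ dual divided differences. This is the standard fact that $P\rtimes \NH\cong \End_{P^\Sigma}(P)$ and that $e_\Sigma$ is a full idempotent there.

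The main obstacle will be the inclusion $e_\Sigma\ftrust e_\Sigma\subseteq\trust$. The bookkeeping that moves $e_\Sigma$ through arbitrary words must be done carefully, using that $P\rtimes\NH=\End_{P^\Sigma}(P)$ is closed under the relevant compositions. If that approach becomes awkward, my fallback is to \emph{define} the Morita equivalence as the one between $\trust$ and $\End_{\trust}(\trust\otimes_{P^\Sigma}P)$ and then identify that endomorphism algebra with $\ftrust$.
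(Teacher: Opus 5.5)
Your proposal is correct and follows essentially the same route as the paper. Both arguments use $e_\Sigma\in\operatorname{NH}_\aleph\subset\ftrust$; both get $e_\Sigma\ftrust e_\Sigma\subseteq\trust$ by rewriting words as $(e_\Sigma K_0e_\Sigma)L_1(e_\Sigma K_1e_\Sigma)\cdots$, where each corner $e_\Sigma Ke_\Sigma$ acts on $P^\Sigma$ by multiplication by an element of $P^\Sigma$; and both get fullness of $e_\Sigma$ from the standard nilHecke fact, which the paper cites as \cite[Lemma~2.12]{We19} rather than via a Schubert-basis decomposition.

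The one ingredient you justify differently is $P^\Sigma\subset\trust$. The paper takes this from \cite[Prop.~3.12, Rem.~3.13]{LWW2}. Deducing it from \cref{iGKLO2} alone tacitly assumes that the coefficients of $a_i(z)$ lie in ${}^\tau\mathbf{Y}_\mu$.
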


\begin{proof}
Recall the idempotent $e_{\Sigma}$ from \cref{symidempotent}. By definition, we have
\(
e_{\Sigma} \in \NH \subset \ftrust.
\)
To prove that $\ftrust$ and $\trust$ are Morita equivalent, it suffices to show that
\(
e_{\Sigma} \ftrust e_{\Sigma} = \trust
\)
and that $e_{\Sigma}$ is a full idempotent in $\ftrust$.

By definition and \cref{prop:iGKLO}, it is clear that
\(
\trust \subset e_{\Sigma} \ftrust e_{\Sigma}.
\)
Conversely, every element of $\ftrust$ is a linear combination of terms of the form
\(
K_0 L_1 K_1 \cdots L_l K_l,
\)
where $K_0,\dots,K_l \in \NH$ and $L_1,\dots,L_l \in \trust$.
Here $K_0$ and $K_l$ may be taken to be $1$, since \(1\in \NH\).
Therefore,
\[
e_{\Sigma} (K_0L_1K_1\cdots L_l K_l)e_{\Sigma}
=
(e_{\Sigma} K_0 e_{\Sigma})L_1(e_{\Sigma} K_1 e_{\Sigma})\cdots L_l(e_{\Sigma} K_l e_{\Sigma}).
\] Now regard these operators as acting on $P^\Sigma$. For every $K\in\NH$, the corner $e_\Sigma K e_\Sigma$ acts on
$P^\Sigma$ by multiplication by an element of $P^\Sigma$.
Since $P^\Sigma\subset\trust$, it follows that $e_{\Sigma} \ftrust e_{\Sigma}\subset \trust$.

It remains to show that $1\in \ftrust e_{\Sigma} \ftrust$. By \cite[Lemma 2.12]{We19}, we see that $1\in \NH (\prod_{i\in Q_0^{(+)}} \partial_{i,w_0}) \NH$. Therefore, we must have
\[
1\in \NH (\prod_{i\in Q_0^{(+)}} \partial_{i,w_0}) \NH =\NH  (\prod_{i\in Q_0^{(+)}} \partial_{i,w_0}) e_{\Sigma} \NH\subset \NH e_{\Sigma} \NH \subset \ftrust e_{\Sigma} \ftrust
\]
since $\prod_{i\in Q_0^{(+)}} \partial_{i,w_0}= (\prod_{i\in Q_0^{(+)}} \partial_{i,w_0})e_{\Sigma} $. This concludes the proof.
\end{proof}

There are three variants of the quantized Coulomb branch algebra introduced in \cite{BFN18}.  
Roughly speaking, the truncated shifted iYangians $\trust$ correspond to the spherical version, while the flag truncated shifted iYangians $\ftrust$ correspond to a flag version of the quantized Coulomb branch algebra.  

In \cite{We19}, Weekes showed that, for quiver gauge theories, the flag version is generated by certain nilHecke algebras together with another variation of the quantized Coulomb branch algebra (denoted by $\mathcal{A}^{\mathrm{ab}}$ \textit{loc.\ cit.}), which admits an explicit set of generators.  
Motivated by this, in the remainder of this section we introduce, at the Yangian level, an analogue of this variation for the gauge theory associated with a quiver with involution, and show that $\ftrust$ is generated by this subalgebra together with $\NH$.

Recall the KLR iYangian $\tR$ from \cref{def:Ya}.  
For any word $\bi=(i_1,\ldots,i_n)\in \word$, let $\tR(\bi)$ denote the subalgebra of $\tR$ spanned by double reflective KLR diagrams whose top and bottom labels lie in $\objRB{\alpha_{\bi}}$. We also define $r_k\in \tR(\bi)$ for $1\leq k<n$ by
\begin{equation}
\label{eq:intertw}
r_k e(\bj)=
\begin{cases}
\bigl((y_{k+1}-y_k)\sigma_k+1\bigr)e(\bj),
& \bj_k=\bj_{k+1},\\
\sigma_k e(\bj),
& \bj_k\neq\bj_{k+1}.
\end{cases}
\end{equation}
They are called the {\bf intertwiners}. It is well known that $r_k$ satisfy the braid relations. Thus for any $w\in \Sigma$, $r_w$ is well-defined via a reduced expression of $w$.

Recall the ordered word $\aleph$ from \cref{sec:diagiGKLO}.  
In particular, we identify the variables $x_{i,k}$ $(i\in Q_0^{(+)},\,1\le k\le v_i)$ with the variables
\(
X_1(\aleph),\ldots,X_m(\aleph).
\)
For convenience, we often identify the pair $(i,k)$ $(i\in Q_0^{(+)},\,1\le k\le v_i)$ with the number \begin{equation}
 \label{eq:pair}
 v_1+v_2+\cdots +v_{i-1}+k.
\end{equation} 

Recall for each $i\in Q_0^{(+)}$ the elements $D_i$ and $D_{\tau i}$ from \cref{Di}. Then we define $\gamma_i$ as follows:
\begin{gather}
    \label{gammai}
    \gamma_i=
    \begin{tikzpicture}
        [anchorbase,scale=1.4]
        \mirror{-.1}{0}{1};
        \draw[-] (.125,0) -- (.125,1)  \toplabel{}
                 (.5,0) -- (.5,1)
                 (.825,0) -- (.825,1)
                 (1.2,0) -- (1.2,1)
                 (1.5,0) -- (1.5,1)
                 (1.9,0) -- (1.9,1);
        \rmirror{2.125}{0}{1};
        \draw[-] (0.625,0) -- (-.1,1/3) -- (2.125,2/3) -- (1.375,1);
        \node at (.3125,.5){$\scriptstyle \cdots$};
         \node at (1.0125,.5){$\scriptstyle \cdots$};
          \node at (1.7,.5){$\scriptstyle \cdots$};
           \draw [decorate,decoration={brace,amplitude=5pt}]  (.5,-.05) -- (.125,-.05); \node at (.625/2,-.35){$\scriptstyle <i$};
           \draw [decorate,decoration={brace,amplitude=5pt}]  (1.375,-.05) -- (.625,-.05); \node at (2.025/2,-.35){$\scriptstyle i$};
           \draw [decorate,decoration={brace,amplitude=5pt}]  (1.9,-.05) -- (1.5,-.05); \node at (1.7,-.35){$\scriptstyle >i$};
    \end{tikzpicture}
,\qquad
    \gamma_{\tau i}:=
     \begin{tikzpicture}
        [anchorbase,scale=1.4]
        \mirror{-.1}{0}{1};
        \draw[-] (.125,0) -- (.125,1)  \toplabel{}
                 (.5,0) -- (.5,1)
                 (.825,0) -- (.825,1)
                 (1.2,0) -- (1.2,1)
                 (1.5,0) -- (1.5,1)
                 (1.9,0) -- (1.9,1);
        \rmirror{2.125}{0}{1};
        \draw[-] (1.375,0) -- (2.125,1/3) -- (-.1,2/3) -- (0.625,1);
        \node at (.3125,.5){$\scriptstyle \cdots$};
         \node at (1.0125,.5){$\scriptstyle \cdots$};
          \node at (1.7,.5){$\scriptstyle \cdots$};
           \draw [decorate,decoration={brace,amplitude=5pt}]  (.5,-.05) -- (.125,-.05); \node at (.625/2,-.35){$\scriptstyle <i$};
           \draw [decorate,decoration={brace,amplitude=5pt}]  (1.375,-.05) -- (.625,-.05); \node at (2.025/2,-.35){$\scriptstyle i$};
           \draw [decorate,decoration={brace,amplitude=5pt}]  (1.9,-.05) -- (1.5,-.05); \node at (1.7,-.35){$\scriptstyle >i$};
    \end{tikzpicture}
\end{gather}

Now for each $i\in Q_0^{(+)}$ and $1\leq k\leq v_i$, we define the following elements in $e(\aleph)\tR e(\aleph)$:
\begin{gather}
    \label{Gammai}
    \Gamma_{i,k}:=r_{i,k}\circ \cdots \circ r_{i,v_i-1}\circ \gamma_i \circ r_{i,1}\circ \cdots \circ r_{i,k-1},\\
    \label{Gammati}
    \Gamma_{\tau i,k}=r_{i,k-1}\circ \cdots \circ r_{i,1}\circ \gamma_{\tau i} \circ r_{i,v_i-1}\circ \cdots \circ r_{i,k},
\end{gather}
Here we have used the convention in \cref{eq:pair}. 
By \cref{Gammai,Gammati}, it is straightforward to check that 
\[
\Gamma_{i,k+1}=r_{i,k}\circ \Gamma_{i,k}\circ r_{i,k}, \qquad \Gamma_{\tau i,k+1}=r_{i,k}\circ \Gamma_{\tau i,k}\circ r_{i,k},
\]
for each $i\in Q_0^{(+)}$ and $1\leq k<v_i$. 

Intuitively, $\Gamma_{i,1}$ is obtained from $D_i$ by replacing each crossing between strands labelled by $i$ with the corresponding intertwiner, which acts on $\Pol$ simply by permuting the variables. Let $\abtrust$ denote the subalgebra of $e(\aleph)\tR e(\aleph)$ generated by the elements $\Gamma_{i,k}$ $(i\in Q_0,\;1\le k\le v_i)$ together with $\NH$.

\begin{theorem}
\label{thm:flag=ab}
    The flag truncated shifted iYangian $\ftrust$ is equal to $\abtrust$.
\end{theorem}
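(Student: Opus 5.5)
We prove the two inclusions $\ftrust\subseteq\abtrust$ and $\abtrust\subseteq\ftrust$ inside $\End(P)$, using the faithful polynomial representation of \cref{thm:tRPol} to identify elements of $e(\aleph)\tR e(\aleph)$ with explicit difference operators on $P=\Pol_\aleph$. The first step is to compute the polynomial action of $\Gamma_{i,k}$ and $\Gamma_{\tau i,k}$. The intertwiners $r_{i,k}$ act on $P$ by the simple transpositions $s_{i,k}$, since $\bigl((X_{k+1}-X_k)\partial_k+1\bigr)=s_k$. Arguing as in the proof of \cref{lambsh}, the middle crossings and the two reflections of $\gamma_i$ contribute $(-1)^{\epsilon_i-v_i}G_{i,v_i}d_{i,v_i}$ composed with the cyclic permutation moving the first $i$-strand to the last. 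Conjugating by the $r$'s then gives, up to a sign,
\[
\Gamma_{i,k}=\pm\, G_{i,k}\,d_{i,k},\qquad \Gamma_{\tau i,k}=\pm\, p_i(x_{i,k})\,d_{i,k}^{-1},
\]
as operators on $P$. We will check this identity on the generating case $k=1$ and propagate it using $\Gamma_{i,k+1}=r_{i,k}\Gamma_{i,k}r_{i,k}$. This gives an analogue of Weekes' abelianized generators $\mathcal A^{\mathrm{ab}}$.

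For the inclusion $\ftrust\subseteq\abtrust$, note that $\ftrust$ is generated by $\NH$ (which contains multiplication by $P$ and divided differences) together with the generators of $\trust$. The elements $a_i(z)$ act by elements of $P^\Sigma\subset\NH$ by \cref{iGKLO2}, and $h_i(z)$ is expressed through the $a_j$ via \cref{Aseries}, so it suffices to treat $b_{i,r}$ and $b_{\tau i,r}$. By \cref{biGKLO} and \cref{eq:bti-explicit}, these equal $\sum_a (x_{i,a}\pm\frac{\hbar}{2})^r V_{i,a}(x_{i,a})^{-1}\,\Gamma_{i^{\pm},a}\,e_\Sigma$ up to signs. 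The factor $V_{i,a}(x_{i,a})^{-1}$ is not in $P$, so instead we write the sum directly as the nilHecke expression \cref{eq:Di-action} using \cref{eq:nilHecke-sum}, that is, $\partial_{i,1}\cdots\partial_{i,v_i-1}\bigl(F\,\Gamma_{i,v_i}\bigr)e_\Sigma$ with $F$ a polynomial. This expression visibly lies in $\abtrust$, since $e_\Sigma\in\NH$.

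For the reverse inclusion $\abtrust\subseteq\ftrust$, it suffices to show $\Gamma_{i,1}\in\ftrust$, because the remaining $\Gamma_{i,k}$ arise by conjugating with $r_{i,k}\in\NH$. This is the main obstacle: we must extract a single term $G_{i,a}d_{i,a}$ from the symmetrized sums $b_{i,r}$. Following \cite{We19}, we use that $\ftrust\supseteq\NH\,\trust\,\NH$ and that $e_\Sigma$ is full (\cref{thm:morita}). We first form $\sum_a x_{i,a}^{r}\,V_{i,a}(x_{i,a})^{-1}G_{i,a}d_{i,a}$ for $0\le r<v_i$ from $b_{i,r}$ combined with multiplication by polynomials. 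We then apply the Vandermonde/Lagrange interpolation trick: multiplying by suitable polynomials in $P$ on the left and using the divided differences, e.g. acting with $\prod_{b\neq 1}(x_{i,1}+\hbar/2-x_{i,b}) \cdot$ (appropriate $\partial$'s), isolates the $a=1$ summand while cancelling the denominator $V_{i,1}(x_{i,1})$. Concretely, we aim to show
\[
\Gamma_{i,1}=\pm\,\partial_{i,w_0'}\Bigl(\textstyle\prod_{b>1}(x_{i,b}-x_{i,1})\Bigr)\,b\text{-type element}\,(\cdots)
\]
by mimicking \cite[Lemma~2.12 and Prop.~3.x]{We19}, and then verify the identity on $P$ using faithfulness. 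We would do this first for $v_i=2$ to pin down the exact polynomial coefficients before writing the general formula. The same argument applies to $\Gamma_{\tau i,1}$, using $b_{\tau i,r}$.
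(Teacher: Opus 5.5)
Your inclusion $\ftrust\subseteq\abtrust$ is fine and is essentially the paper's argument. By \cref{eq:Di-action}, each $b_{i,r}$ and $b_{\tau i,r}$ lies in the two-sided $\operatorname{NH}_\aleph$-span of the $\Gamma$'s, and the $h_{i,r}$ act by elements of $P^\Sigma$.

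The reverse inclusion is the real content of the theorem, and there you only give a plan. The mechanism you suggest for isolating one summand does not work as stated. Left multiplication by the single polynomial $\prod_{b\neq 1}(x_{i,1}+\frac{\hbar}{2}-x_{i,b})$ rescales every summand by the same factor and kills none of them, and divided differences play no role. The vanishing must come from combining \emph{different} $b_{i,t}$:
Normalize $B_t:=\pm b_{i,t}$ so that $B_t=\sum_{a}(x_{i,a}+\frac{\hbar}{2})^t\,G_{i,a}V_{i,a}(x_{i,a})^{-1}d_{i,a}e_\Sigma$ on $P$. Expand $F(z)=\prod_{j=2}^{v_i}(z-x_{i,j}-\frac{\hbar}{2})=\sum_{t=0}^{v_i-1}f_tz^t$ with $f_t\in\C[\hbar][x_{i,2},\ldots,x_{i,v_i}]$. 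All coefficients sit to the left of the shifts, so $\sum_t f_tB_t=\sum_a F(x_{i,a}+\frac{\hbar}{2})\,G_{i,a}V_{i,a}(x_{i,a})^{-1}d_{i,a}e_\Sigma$. Since $F(x_{i,a}+\frac{\hbar}{2})$ vanishes for $a\neq 1$ and equals $V_{i,1}(x_{i,1})$ for $a=1$, this is $G_{i,1}d_{i,1}e_\Sigma=\pm\Gamma_{i,1}e_\Sigma$. The same works for $\Gamma_{\tau i,1}e_\Sigma$ using the $b_{\tau i,t}$.

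The more serious gap is what comes after: interpolation only produces $\Gamma_{i,1}e_\Sigma$. Everything coming from $\trust$ carries the right factor $e_\Sigma$; your sums $\sum_a x_{i,a}^rV_{i,a}(x_{i,a})^{-1}G_{i,a}d_{i,a}$ without it do not even preserve $P$. Citing fullness of $e_\Sigma$ does not remove this factor, because $\Gamma_{i,1}$ does not commute with $\operatorname{NH}_\aleph$.

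The idea you are missing is that $\Gamma_{i,1}$ normalizes $P$. It acts as a polynomial times a ring automorphism of $P$ (namely $\pm G_{i,1}d_{i,1}$), so $\Gamma_{i,1}p=\phi(p)\Gamma_{i,1}$ for all $p\in P$, with $\phi$ an automorphism of $P$. Take a $P^\Sigma$-basis $(p_a)$ of $P$ and the dual coordinate maps $q_a\in\End_{P^\Sigma}(P)\cong\operatorname{NH}_\aleph$; these satisfy $e_\Sigma q_a=q_a$. Then $1=\sum_a p_ae_\Sigma q_a$, and therefore $\Gamma_{i,1}=\sum_a\phi(p_a)\,(\Gamma_{i,1}e_\Sigma)\,q_a\in\ftrust$. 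This is how the paper finishes, phrased with $\gamma_i$, which differs from $\Gamma_{i,1}$ by the invertible intertwiners $r_{i,k}$.

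Only the left factors $p_a$ need to be polynomials; the $q_a$ genuinely involve divided differences. For $v_i=2$, $e_\Sigma$ is averaging over $s_{i,1}$, and $1=\sum_a p_ae_\Sigma q_a$ with all $p_a,q_a\in P$ would force $2\in(x_{i,1}-x_{i,2})P$. So the paper's ``$p_a,q_a\in P$'' should be read with $q_a\in\operatorname{NH}_\aleph$.
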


\begin{proof}
 Since $r_{i,k}\in \NH$ acts trivally on $P^\Sigma=e_{\Sigma} P$ for every $i\in Q_0^{(+)}$ and $1\le k<v_i$, it follows that $\abtrust$ is precisely the subalgebra of $e(\aleph)\tR e(\aleph)$ generated by $\gamma_i$ $(i\in Q_0)$ together with $\NH$.  
Therefore, \cref{biimage,btiimage} implies that $\ftrust\subset \abtrust$.

On the other hand, to show that $\abtrust\subset \ftrust$, it suffices to show that $\gamma_i ,\gamma_{\tau i} \in \ftrust$ for all $i\in Q_0^{(+)}$. We prove for $\gamma_i$ and the case for $\gamma_{\tau i}$ is similar. By \cref{biimage}, up to a sign we have
\begin{align}
\label{bread}
    b_{i,r}=&\partial_{i,1}\circ \cdots \circ \partial_{i,v_i-1}(x_{i,v_i}+\frac{\hbar}{2})^r \gamma_i e_{\Sigma}
    =\sum_{k=1}^{v_i}
\frac{\left(x_{i,k}+\frac{\hbar}{2}\right)^r}
{\prod_{\substack{1\le j\le v_i\\ j\ne k}}(x_{i,k}-x_{i,j})} \gamma_i e_{\Sigma}
\end{align}
when acting on $P$. Now we set 
\[F(z):=\prod_{j=2}^{v_i}(z-x_{i,j}-\frac{\hbar}{2})=\sum_{t=0}^{v_i-1}f_t(x_{i,2},\ldots,x_{i,v_i}) z^t\]
for some polynomials $f_t$. Then up to a sign we have
\begin{multline*}
    \sum_{t=0}^{v_i-1} f_t(x_{i,2},\ldots,x_{i,v_i})b_{i,t} \overset{\cref{bread}}{=}  \sum_{t=0}^{v_i-1} f_t(x_{i,2},\ldots,x_{i,v_i}) \sum_{k=1}^{v_i}
\frac{\left(x_{i,k}+\frac{\hbar}{2}\right)^t}
{\prod_{\substack{1\le j\le v_i\\ j\ne k}}(x_{i,k}-x_{i,j})} \gamma_i e_{\Sigma} \\
= \sum_{k=1}^{v_i} F(x_{i,k}+\frac{\hbar}{2})\frac{1}{\prod_{\substack{1\le j\le v_i\\ j\ne k}}(x_{i,k}-x_{i,j})}  \gamma_i e_{\Sigma} =\gamma_i e_{\Sigma}
\end{multline*}
when acting on $P$. Thus we have shown that $\gamma_i e_\Sigma\in \ftrust$.

We now remove the factor $e_\Sigma$. Since $e_\Sigma$ is a full idempotent in
$\NH$, and $P$ is finite free over $P^\Sigma$, there exist elements
$p_a,q_a\in P$ such that
\[
    1=\sum_a p_a e_\Sigma q_a
    \qquad \text{in } \NH .
\]
Moreover, by the definition of $\gamma_i$ in terms of intertwiners and mirror
reflections, $\gamma_i$ normalizes the polynomial algebra $P$: there is an
automorphism $\phi_i$ of $P$ such that
\[
    \gamma_i p=\phi_i(p)\gamma_i
    \qquad \text{for all }p\in P .
\]
Therefore
\[
    \gamma_i
    =
    \gamma_i\cdot 1
    =
    \sum_a \gamma_i p_a e_\Sigma q_a
    =
    \sum_a \phi_i(p_a)\gamma_i e_\Sigma q_a .
\]
Since $P\subset \NH\subset \ftrust$ and $\gamma_i e_\Sigma\in \ftrust$, the
right-hand side belongs to $\ftrust$. Hence $\gamma_i\in\ftrust$. This concludes the proof.
\end{proof}

Combining \cref{thm:morita} and \cref{thm:flag=ab}, we immediately obtain the following:
\begin{corollary}
    $\abtrust$ and $\trust$ are Morita equivalent.
\end{corollary}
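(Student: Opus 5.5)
The plan is to compose the two equivalences already established. Theorem \ref{thm:morita} gives a Morita equivalence between $\ftrust$ and $\trust$, realised by the full idempotent $e_\Sigma\in\NH\subset\ftrust$ with $e_\Sigma\ftrust e_\Sigma=\trust$. Theorem \ref{thm:flag=ab} states that $\ftrust=\abtrust$ as subalgebras of $e(\aleph)\tR e(\aleph)\subset\End(P)$. Since this is an equality of algebras, not just an isomorphism, the same idempotent $e_\Sigma$ lies in $\abtrust$. Its corner satisfies $e_\Sigma\abtrust e_\Sigma=e_\Sigma\ftrust e_\Sigma=\trust$, and the fullness condition $1\in\abtrust\, e_\Sigma\,\abtrust$ carries over verbatim.

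Concretely, I would take the progenerator $\abtrust e_\Sigma$. The functor
\[
M\longmapsto e_\Sigma M\cong \Hom_{\abtrust}(\abtrust e_\Sigma,M)
\]
gives an equivalence from $\abtrust$-modules to $e_\Sigma\abtrust e_\Sigma=\trust$-modules. Its inverse is
\[
N\longmapsto \abtrust e_\Sigma\otimes_{\trust}N.
\]
These functors are mutually inverse precisely because $e_\Sigma$ is full; this is the standard idempotent Morita criterion already used in the proof of Theorem \ref{thm:morita}.

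There is no real obstacle here. The only point to double-check is that the identification in Theorem \ref{thm:flag=ab} is an equality inside $\End(P)$, so that $e_\Sigma$ and the relation $1\in\ftrust e_\Sigma\ftrust$ transfer without any further argument.
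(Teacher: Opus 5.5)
Your proposal is correct and takes the same approach as the paper, which derives the corollary in one line by combining \cref{thm:morita} with the equality $\ftrust=\abtrust$ of \cref{thm:flag=ab}. Transferring the full idempotent $e_\Sigma$ explicitly just spells out that deduction; note that an algebra isomorphism would already suffice, since Morita equivalence is invariant under isomorphism.
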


\section{Weight modules and interval oKLRW algebras}
In this section, we study integral weight modules for the (flag)
truncated shifted iYangians and establish an equivalence with
nilpotent modules over a variant of the oKLRW algebra at
\(\hbar=1\). 
To this end, we also assume that
\begin{itemize}
    \item every element of $\bR_i$ lies in $\frac{1}{4}+\Z$ for each $i\in Q_0^{(+)}$;
    \item every element of $\bR_{\tau i}$ lies in $\frac{3}{4}+\Z$ for each $i\in Q_0^{(+)}$.
\end{itemize}
We say that the parameters $\bR$ are \textbf{integral} if these two conditions are satisfied.

\newcommand{\Spec}{\text{Spec}}

\subsection{Weight modules for (flag) truncated shifted iYangians}
Recall the Cartan series $a_i(z)$ from \cref{Aseries}.  
In \cite[(3.11)]{LWW2}, the \emph{Gelfand--Tsetlin subalgebra} of $\trust$ is defined in terms of certain coefficients of the series $a_i(z)$.  
In particular, by \cite[Proposition~3.12 and Remark~3.13]{LWW2}, it follows that $\trust$ contains the polynomial subalgebra
\[
P^\Sigma=\C\bigl[a_i^{(s)} \mid i\in Q_0^{(+)},\ 1\le s\le v_i\bigr].
\]

We write $\Spec P^\Sigma$ for the set of collections $\bS=(S_i)_{i\in Q_0^{(+)}}$ of multisets such that $|S_i|=v_i$ for each $i\in Q_0^{(+)}$.  
Any such collection $\bS$ determines an evaluation map
\[
\ev_{\bS}\colon P^\Sigma \to \C,
\]
which sends $a_i^{(s)}$ to the $s$-th elementary symmetric function in the multiset $S_i$.

On the other hand, the algebra $\ftrust$ contains the polynomial subalgebra
\[
P=\C\bigl[x_{i,k}\mid i\in Q_0^{(+)},\ 1\le k\le v_i\bigr].
\]
We also write
\(
\Spec P := \prod_{i\in Q_0^{(+)}} \C^{v_i}.
\)
An element $\bnu=(\nu_i)_{i\in Q_0^{(+)}}\in \Spec P$, where $\nu_i=(\nu_{i,1},\dots,\nu_{i,v_i})$, gives rise to an evaluation map
\[
\ev_{\bnu}\colon P\to \C,
\]
sending $x_{i,k}$ to $\nu_{i,k}$ for all $i\in Q_0^{(+)}$ and $1\le k\le v_i$.

\begin{definition}\label{def:wtmod}
A {\bf weight module} over $\trust$ (resp.\ $\ftrust$) is a module $M$
admitting a decomposition
\[
M=\bigoplus_{\bS\in\Spec P^\Sigma}W_{\bS}(M)
\qquad
\left(
\text{resp. }
M=\bigoplus_{\bnu\in\Spec P}W_{\bnu}(M)
\right)
\]
into finite-dimensional generalized eigenspaces, where
\[
W_{\bS}(M)
=
\bigl\{
v\in M
\;\big|\;
\exists\,N\gg 0\text{ such that }
(f-\ev_{\bS}(f))^Nv=0
\text{ for all }f\in P^\Sigma
\bigr\},
\]
and, respectively,
\[
W_{\bnu}(M)
=
\bigl\{
v\in M
\;\big|\;
\exists\,N\gg 0\text{ such that }
(f-\ev_{\bnu}(f))^Nv=0
\text{ for all }f\in P
\bigr\}.
\]
\end{definition}

We call $W_{\bnu}(M)$ the $\bnu$-weight space of $M$, and refer to the
assignment
\[
M\longmapsto W_{\bnu}(M)
\]
as the $\bnu$-weight functor. Similarly, we call $W_{\bS}(M)$ the
$\bS$-weight space of $M$, and refer to the assignment
\[
M\longmapsto W_{\bS}(M)
\]
as the $\bS$-weight functor.

As pointed out in \cite[Sect. 5.1]{KTWWY19}, the Morita equivalence of \cref{thm:morita} identifies weight modules over $\ftrust$ with weight modules over $\trust$.  
There is a natural map
\[
\Spec P \to \Spec P^\Sigma
\]
sending a tuple $\ba=(a_i)$ to the collection of multisets
\(
\bS(\ba)=(S_i(\ba)),
\)
where $S_i(\ba)$ is the multiset underlying the tuple $a_i$.

Let $M$ be a weight module over $\ftrust$, and let $M^\Sigma$ denote its image under the Morita equivalence.  
The group $\Sigma$ acts naturally on 
\(
\Spec P=\prod_{i\in Q_0^{(+)}} \C^{v_i},
\)
and hence one may consider the stabilizer $\text{Stab}_\Sigma(\ba)$ of a weight $\ba$.  
Then we have (cf. \cite[(5.1)]{KTWWY19})
\begin{equation}
\label{eq:weights-Y}
W_{\bS(\ba)}(M^\Sigma)=W_{\ba}(M)^{\text{Stab}_\Sigma(\ba)}.
\end{equation}
We say that $\ba \in \Spec P$ is an \textbf{integral weight} if, for every $i \in Q_0^{(+)}$ and $1 \le k \le v_i$, we have 
\begin{gather}
\label{integral}
a_{i,k}\in \frac{1}{4}+\Z
\end{gather}
(Recall that we have identified the pair $(i,k)$ with the number \(
 v_1+v_2+\cdots +v_{i-1}+k.
\) )
For any weight $\ba$, we extend the notation by setting
\[
a_{\tau i,k}:=-a_{i,k}\qquad \text{ for any }i\in Q_0^{(+)},1\leq k \leq v_i=v_{\tau i}.
\]

\begin{remark}
The restriction to integral weights is imposed only for simplicity. A more
general treatment can be organized according to the orbits of the spectral
parameters, in a way analogous to the discussion of integrality classes in
\cite[\S 5.6]{KTWWY19}. We note, however, that the conventions are different:
the normalization in \cite{KTWWY19} corresponds to setting \(\hbar=2\),
whereas in the present paper we set \(\hbar=1\).

In our conventions, the translation orbit of a spectral parameter \(a\in\C\)
is
\[
   \mathcal O_a
   :=
   a+\frac{\hbar}{2}\Z
   =
   a+\frac12\Z.
\]
The integrality condition in \eqref{integral} together with the convention
$a_{\tau i,k}=-a_{i,k}$ select the orbit
\[
   \mathcal O_{\frac14}
   =
   \frac14+\frac12\Z.
\]
This orbit is preserved by the involution \(a\mapsto -a\), since
\[
   -\frac14+\frac12\Z
   =
   \frac14+\frac12\Z.
\]
Moreover, it decomposes into two distinct cosets modulo \(\Z\):
\[
   \frac14+\frac12\Z
   =
   \left(\frac14+\Z\right)
   \sqcup
   \left(-\frac14+\Z\right).
\]
The two copies \(Q_0^{(+)}\) and \(Q_0^{(-)}\) in our construction encode
these two cosets, and the involution \(\tau\) exchanges them. More generally, the involution sends
\[
   \mathcal O_a
   =
   a+\frac12\Z
   \qquad\text{to}\qquad
   \mathcal O_{-a}
   =
   -a+\frac12\Z.
\]
If \(\mathcal O_a=\mathcal O_{-a}\), then a single self-dual iquiver component
is sufficient. This is the reason for restricting to
\(\mathcal O_{\frac14}\) in the present paper. If
\(\mathcal O_a\neq\mathcal O_{-a}\), then one must consider a pair of quiver
components indexed by \(\mathcal O_a\) and \(\mathcal O_{-a}\), exchanged by
the induced involution. For an arbitrary collection of spectral parameters,
one should take the disjoint union of the components associated with the
relevant orbits.
\end{remark}

A weight module $M$ over $\ftrust$ is said to have \textbf{integral weights} if
\(
W_{\bnu}(M)\neq 0
\)
implies that $\bnu$ is an integral weight.  
We write $\fywtmod$ to denote the category of finitely generated $\ftrust$-weight modules with integral weights.

\subsection{Interval oKLRW algebras}
Recall the set of integral parameters $\bR=(\bR_i)_{i\in Q_0}$, and suppose that
\[
\bR_i=\{w_{i,j}\mid 1\le j\le \lambda_i\}
\qquad (i\in Q_0),
\]
where
\(
\lambda=\sum_{i\in Q_0}\lambda_i\varpi_i.
\)

From $\lambda$ and the integral parameters $\bR$, we construct a sequence
of dominant weights
\begin{equation}
\label{eq:lamseq}
\lamseq:=\lamseq(\bR)=(\rlam^{(0)},\rlam^{(1)},\rlam^{(2)},\ldots)
\end{equation}
as follows. For \(i\in Q_0^{(+)}\) and \(n\in\mathbb Z\), define
\[
z_{i,n}
:=
\#\left\{
c\in\bR_{\tau i}
\ \middle|\
c=\frac34-n
\right\},
\qquad
z_{\tau i,n}
:=
\#\left\{
c\in\bR_i
\ \middle|\
c=\frac14-n
\right\}.
\]
We then define
\[
\rlam^{(2n)}
:=
\sum_{i\in Q_0^{(+)}}
\left(
z_{i,n}\varpi_i
+
z_{\tau i,n}\varpi_{\tau i}
\right)
\qquad
(n\geq0),
\]
and
\[
\rlam^{(1)}
:=
\sum_{i\in Q_0^{(+)}}
z_{\tau i,-1}\varpi_i.
\]
For \(n\geq1\), we also define
\[
\rlam^{(2n+1)}
:=
\sum_{i\in Q_0^{(+)}}
\left(
z_{\tau i,-n-1}\varpi_i
+
z_{i,-n}\varpi_{\tau i}
\right).
\]
\newcommand{\eint}{e_{\text{int}}}

We note that $\lamseq$ contains only finitely many nonzero dominant weights, determined by the choice of $\bR$. Recall that we have fixed a total order on $Q_0^{(+)}$, which induces an order on $Q_0^{(-)}$ by requiring that
\[
i>j \quad \text{if and only if} \quad \tau i < \tau j.
\]

\begin{definition}\label{Omega}
Let $\Omega$ be the set of sequences 
\[ \nu=(\bi^{(1)},\bi^{(2)},\ldots) \] 
such that each $\bi^{(r)}$ is a finite word in the alphabet $Q_0$, all but finitely many of the words $\bi^{(r)}$ are empty, and the following conditions hold: 
\begin{enumerate} 
\item for each $n\geq 0$, the word $\bi^{(2n+1)}$ consists of vertices in $Q_0^{(+)}$ arranged in increasing order; 
\item for each $n\geq 0$, the word $\bi^{(2n+2)}$ consists of vertices in $Q_0^{(-)}$ arranged in increasing order; 
\item for every $i\in Q_0^{(+)}$, the total number of letters equal to either $i$ or $\tau i$ in all the words $\bi^{(r)}$ is equal to $v_i$. \end{enumerate} 
\end{definition}
We regard $\nu$ as an interval configuration: the word $\bi^{(r)}=(\bi^{(r)}_1,\bi^{(r)}_2,\ldots,\bi^{(r)}_{\ell_r})$ records
the black strands lying between the red strands labeled by $\rlam^{(r-1)}$
and $\rlam^{(r)}$. We also let $e(\nu)$ denote the idempotent represented
diagrammatically by the identity diagram on $\nu$, consisting of vertical
black strands together with vertical red strands labeled, from left to right,
by the terms of $\lamseq=(\rlam^{(0)},\rlam^{(1)},\rlam^{(2)},\ldots)$.
Since all but finitely many words $\bi^{(r)}$ are empty, this diagram is
trivial outside a sufficiently large finite set of intervals.

Before defining the interval oKLRW algebra, we first extend the definition of
oKLRW algebras to allow infinite sequences
\[
    \lamseq=(\rlam^{(0)},\rlam^{(1)},\rlam^{(2)},\ldots)
\]
where only finitely many $\rlam^{(r)}$ are nonzero. For a fixed $\lamseq$ and
$N\in \mathbb N$, we let
\[
    \lamseq_{\leq N}
    =
    (\rlam^{(0)},\rlam^{(1)},\ldots,\rlam^{(N)}).
\]

Choose \(N_0\) such that
\(\rlam^{(r)}=0\) for all \(r>N_0\). For every \(N\geq N_0\), there
is a natural algebra embedding
\[
    \oklrwzero^{\lamseq_{\leq N}}
    \hookrightarrow
    \oklrwzero^{\lamseq_{\leq N+1}}
\]
given diagrammatically by adding one vertical red strand labeled by
\(\rlam^{(N+1)}=0\) at the far right of every diagram. Since red
strands labeled by \(0\) do not contribute to the local red--black
relations, these maps are compatible with multiplication.

We define
\[
    \oklrwzero^{\lamseq}
    :=
    \varinjlim_{N\geq N_0}
    \oklrwzero^{\lamseq_{\leq N}}.
\]
where the direct limit is taken with respect to the embeddings above. Thus
\(\oklrwzero^{\lamseq}\) is a locally unital algebra. (For a general discussion about locally unital algebras and their basic properties, see \cite[Sect. 2.1]{BD17}.) 

Equivalently, every diagram appearing in $\oklrwzero^{\lamseq}$ is required to be trivial outside a sufficiently large finite set of intervals. Since $\rlam^{(r)}=0$ for all sufficiently large $r$, adding or removing empty zero-labeled red strands at the far right does not change the diagrammatic relations. Now we let $\eint$ denote the formal sum 
\begin{gather}
\label{def:eint}
\eint:=\sum_{\nu\in \Omega}e(\nu).
\end{gather}
Then the interval oKLRW algebra is defined to be the locally unital algebra
\begin{gather}
\label{def:intklrwalg}
    \intklrw
    := \eint \cdot \oklrwzero^{\lamseq} \cdot \eint=
    \bigoplus_{\nu,\nu'\in\Omega}
    e(\nu')\cdot \oklrwzero^{\lamseq}\cdot e(\nu).
\end{gather}

To describe the generators of the interval oKLRW algebra $\intklrw$, we define the following elements diagrammatically in $\intklrw$. 

Now fix a $\nu\in \Omega$. For any $a,b,l,l'\geq 1$ such that $a-b\equiv 1 \bmod 2$, $i=\bi^{(a)}_l$ and $\tau i=(\bi')^{(b)}_{l'}$, we define {\em long reflections} between intervals of opposite parity
\begin{gather}
\label{Longreflect}
    \rho_{a,l}^{b,l'}(\nu,i):= \begin{cases}
        \begin{tikzpicture}
        [anchorbase,scale=1.2]
        \mirror{0}{0}{1};
        \node at (.5,.5){$\cdots$};
         \node at (3.2,.5){$\cdots$};
           \node at (5.9,.5){$\cdots$};
           \node at (1.2,.5){$\scriptstyle \cdots$};
           \node at (3.8,.5){$\scriptstyle \cdots$};
        \node at (1.7,.5){$\scriptstyle \cdots$};
        \node at (2.6,.5){$\scriptstyle \cdots$};
        \node at (4.25,.5){$\scriptstyle \cdots$};
        \node at (5.2,.5){$\scriptstyle \cdots$};
        \draw[red] (1,0)\botlabel{\rlam^{\textcolor{black}{(a-1)}}} -- (1,1);
        \draw[red] (2.8,0)\botlabel{\rlam^{\textcolor{black}{(a)}}} -- (2.8,1);
        \draw[red] (3.6,0)\botlabel{\rlam^{\textcolor{black}{(b-1)}}} -- (3.6,1);
         \draw[red] (5.4,0)\botlabel{\rlam^{\textcolor{black}{(b)}}} -- (5.4,1);
        \draw[-] (1.4,0) -- (1.4,1)  \toplabel{}
                 (1.9,0) -- (1.9,1)
                 (2.4,0) -- (2.4,1);
        \draw[-] (4,0) -- (4,1)  \toplabel{}
                 (4.5,0) -- (4.5,1)
                 (5,0) -- (5,1);
        \draw[-] (2.2,0) \botlabel{i} -- (0,.35) -- (4.75,1) \toplabel{\tau i};
    \end{tikzpicture}, & a<b;\\
       \begin{tikzpicture}
        [anchorbase,scale=1.2]
        \mirror{0}{0}{1};
        \node at (.5,.5){$\cdots$};
         \node at (3.2,.5){$\cdots$};
           \node at (5.9,.5){$\cdots$};
           \node at (1.2,.5){$\scriptstyle \cdots$};
           \node at (3.8,.5){$\scriptstyle \cdots$};
        \node at (1.7,.5){$\scriptstyle \cdots$};
        \node at (2.6,.5){$\scriptstyle \cdots$};
        \node at (4.25,.5){$\scriptstyle \cdots$};
        \node at (5.2,.5){$\scriptstyle \cdots$};
        \draw[red] (1,0)\botlabel{\rlam^{\textcolor{black}{(b-1)}}} -- (1,1);
        \draw[red] (2.8,0)\botlabel{\rlam^{\textcolor{black}{(b)}}} -- (2.8,1);
        \draw[red] (3.6,0)\botlabel{\rlam^{\textcolor{black}{(a-1)}}} -- (3.6,1);
         \draw[red] (5.4,0)\botlabel{\rlam^{\textcolor{black}{(a)}}} -- (5.4,1);
        \draw[-] (1.4,0) -- (1.4,1)  \toplabel{}
                 (1.9,0) -- (1.9,1)
                 (2.4,0) -- (2.4,1);
        \draw[-] (4,0) -- (4,1)  \toplabel{}
                 (4.5,0) -- (4.5,1)
                 (5,0) -- (5,1);
        \draw[-] (4.75,0) \botlabel{i} -- (0,1-.35) -- (2.2,1) \toplabel{\tau i};
    \end{tikzpicture}, & a>b;
    \end{cases}
\end{gather}

In both cases of \cref{Longreflect}, the bottom of the diagram is labeled by $\nu$.  
Suppose the top is labeled by $(\bj,\redkappa_2)$, then the unique non-vertical strand is the one joining the occurrence \(i=\bi^{(a)}_l\) to the occurrence \(\tau i=\bj^{(b)}_{l'}\), which touches the mirror exactly once. By \cref{def:intklrwalg}, the indices of the intervals containing the two endpoints differ by an odd number.


For any $a,b,l,l' \geq 1$ such that $a-b\equiv 0\bmod 2$ and $i=\bi^{(a)}_l={\bi'}^{(b)}_{l'}$, we also define  {\em long crossings} between intervals of same parity
\begin{align}
    \label{Longcross}
    C_{a,l}^{b,l'}(\nu,i):=\begin{cases}
\begin{tikzpicture}
        [anchorbase]
        \mirror{0}{0}{1};
        \node at (.5,.5){$\cdots$};
         \node at (3.2,.5){$\cdots$};
           \node at (5.9,.5){$\cdots$};
           \node at (1.2,.5){$\scriptstyle \cdots$};
           \node at (3.8,.5){$\scriptstyle \cdots$};
        \node at (1.7,.5){$\scriptstyle \cdots$};
        \node at (2.6,.5){$\scriptstyle \cdots$};
        \node at (4.5,.5){$\scriptstyle \cdots$};
        \node at (5.2,.5){$\scriptstyle \cdots$};
        \draw[red] (1,0)\botlabel{\rlam^{\textcolor{black}{(a-1)}}} -- (1,1);
        \draw[red] (2.8,0)\botlabel{\rlam^{\textcolor{black}{(a)}}} -- (2.8,1);
        \draw[red] (3.6,0)\botlabel{\rlam^{(b-1)}} -- (3.6,1);
         \draw[red] (5.4,0)\botlabel{\rlam^{\textcolor{black}{(b)}}} -- (5.4,1);
        \draw[-] (1.4,0) -- (1.4,1)  \toplabel{}
                 (1.9,0) -- (1.9,1)
                 (2.4,0) -- (2.4,1);
        \draw[-] (4.75,0) -- (4.75,1)  \toplabel{}
                 (4.25,0) -- (4.25,1)
                 (5,0) -- (5,1);
        \draw[-] (2.2,0) \botlabel{i} -- (4,1)\toplabel{i};
    \end{tikzpicture}, & a<b;\\
\begin{tikzpicture}
        [anchorbase,scale=1.2]
        \mirror{0}{0}{1};
        \node at (.5,.5){$\cdots$};
         \node at (3.2,.5){$\cdots$};
           \node at (5.9,.5){$\cdots$};
           \node at (1.2,.5){$\scriptstyle \cdots$};
           \node at (3.8,.5){$\scriptstyle \cdots$};
        \node at (1.7,.5){$\scriptstyle \cdots$};
        \node at (2.6,.5){$\scriptstyle \cdots$};
        \node at (4.5,.5){$\scriptstyle \cdots$};
        \node at (5.2,.5){$\scriptstyle \cdots$};
        \draw[red] (1,0)\botlabel{\rlam^{\textcolor{black}{(b-1)}}} -- (1,1);
        \draw[red] (2.8,0)\botlabel{\rlam^{\textcolor{black}{(b)}}} -- (2.8,1);
        \draw[red] (3.6,0)\botlabel{\rlam^{\textcolor{black}{(a-1)}}} -- (3.6,1);
         \draw[red] (5.4,0)\botlabel{\rlam^{\textcolor{black}{(a)}}} -- (5.4,1);
        \draw[-] (1.4,0) -- (1.4,1)  \toplabel{}
                 (1.9,0) -- (1.9,1)
                 (2.4,0) -- (2.4,1);
        \draw[-] (4.75,0) -- (4.75,1)  \toplabel{}
                 (4.25,0) -- (4.25,1)
                 (5,0) -- (5,1);
        \draw[-] (4,0) -- (2.2,1)\toplabel{i};
    \end{tikzpicture}, & a>b.
    \end{cases}
\end{align}

In both cases of \cref{Longcross}, the bottom of the diagram is labeled by $\nu$.  
Suppose that the top is labeled by $(\bj,\redkappa_2)$, then the unique non-vertical strand in \cref{Longcross} joins the occurrence \(i=\bi^{(a)}_l\) to the occurrence \(i=\bj^{(b)}_{l'}\). By \cref{def:intklrwalg}, the two intervals containing these occurrences must differ by an even number.

Besides, when $\bi^{(j)}_k=\bi^{(j)}_{k+1}$, we define a crossing that fixes $\nu$ by
\begin{gather}
    \label{adcross}
    \sigma_j(\nu,k):=
    \begin{tikzpicture}
        [anchorbase,scale=1.2]
        \mirror{0}{0}{1};
        \node at (.5,.5){$\cdots$};
         \node at (3,.5){$\cdots$};
         \node at (1.3,.5){$\scriptstyle \cdots$};
        \node at (2.2,.5){$\scriptstyle \cdots$};
        \draw[red] (1,0)\botlabel{\rlam^{\textcolor{black}{(j-1)}}} -- (1,1);
        \draw[red] (2.6,0)\botlabel{\rlam^{\textcolor{black}{(j)}}} -- (2.6,1);
        \draw[-] (1.5,0) -- (2,1)  \toplabel{\bi_k^{(j)}}
                 (2,0)  \botlabel{\bi_{k+1}^{(j)}} -- (1.5,1);
    \end{tikzpicture}
    \qquad 1\leq k\leq \ell_j-1.
\end{gather}
We refer to such crossings as \emph{admissible crossings}; cf. \cite[Section~5.4]{KWWY14}.

At last, we also define the dot on the strand labeled by $\bi_k^{(j)}$ as
\begin{gather}
    \label{Longdot}
    Y_j(\nu,k):=
    \begin{tikzpicture}
        [anchorbase,scale=1.2]
        \mirror{0}{0}{1};
        \node at (.5,.5){$\cdots$};
         \node at (3,.5){$\cdots$};
         \node at (1.3,.5){$\scriptstyle \cdots$};
        \node at (2.2,.5){$\scriptstyle \cdots$};
        \draw[red] (1,0)\botlabel{\rlam^{\textcolor{black}{(j-1)}}} -- (1,1);
        \draw[red] (2.6,0)\botlabel{\rlam^{\textcolor{black}{(j)}}} -- (2.6,1);
        \draw[-] (1.75,0)\botlabel{\bi_k^{(j)}} -- (1.75,1) \toplabel{}  ;
        \bdot{1.75,.5};
    \end{tikzpicture}
    \qquad
    1\leq k\leq \ell_j.
\end{gather}

\begin{prop}
\label{prop:intgen}
The interval oKLRW algebra $\intklrw$ is the subalgebra of $\oklrwzero$ generated by the idempotents $e(\nu)$ and the diagrams in \cref{Longreflect}--\cref{Longdot}, where $\nu$ ranges over $\Omega$ and $i,a,b,j,k$ range over all admissible values.
\end{prop}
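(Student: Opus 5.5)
The inclusion of the subalgebra generated by the $e(\nu)$ and the diagrams \cref{Longreflect}--\cref{Longdot} into $\intklrw$ is immediate. Each of these diagrams is a morphism of $\oklrwzero^{\lamseq}$ whose top and bottom both lie in $\Omega$, so it lies in $\eint\,\oklrwzero^{\lamseq}\,\eint$. For this one only has to check that the top of each long reflection or long crossing is again an element of $\Omega$. This holds because the moved strand is inserted at position $l'$, chosen so that the target word stays increasing. It also uses the parity condition: an even label reflected across the mirror becomes odd, and the index of its interval changes by an odd number. Write $A'$ for the generated subalgebra. The real content is the reverse inclusion $\intklrw\subset A'$.

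For the reverse inclusion I will work one morphism space $e(\nu')\,\oklrwzero^{\lamseq}\,e(\nu)$ at a time. By \cref{oklrwbasis} (after truncating $\lamseq$ to $\lamseq_{\le N}$), this space is spanned by the elements $D_{w,\redkappa'}\,y^{\mathbf a}\,e(\nu)$. The dot part $y^{\mathbf a}e(\nu)$ is a product of the $Y_j(\nu,k)$, so it lies in $A'$. It therefore suffices to prove, by induction on the length of $w\in SS_n$ (counting mirror touches and all crossings, as in \cref{lem:spanpart}), that each reduced diagram $D_{w,\redkappa'}e(\nu)$ lies in $A'$ modulo the span of diagrams with fewer crossings. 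The base case $w=1$ is the idempotents. Since $\redkappa'$ is determined by $\nu'$, the lower-order terms also have ends in $\Omega$ and are handled by the induction.

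For the inductive step, I factor $w$ into \emph{one-strand moves}. Take the black strands of the bottom configuration $\nu$ one at a time, say from left to right. Move each to its final interval by a single long crossing $C_{a,l}^{b,l'}$ (if the parity of the interval is unchanged) or a single long reflection $\rho_{a,l}^{b,l'}$ (if it changes). At each stage, insert the strand into the target interval at the position that keeps the word there weakly increasing, so that every intermediate configuration lies in $\Omega$. Finally, finish with admissible crossings $\sigma_j(\nu,k)$ to realize the correct permutation among equally labelled strands inside each interval. Two facts should make this composite agree with $D_{w,\redkappa'}$ up to lower-order terms:

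\begin{itemize}
\item All strands in one interval lie in the same parity class $Q_0^{(\pm)}$, and by the chosen orientation there are no arrows between two vertices of the same class. Hence crossings of distinct labels inside an interval satisfy \cref{doublecross} with $\overline f=1$, and reordering them is free.
\item Every long move has reduced underlying diagram. Its strand meets the mirror at most once, crosses each red strand it passes either once or, when it reflects, twice, and crosses each black strand at most once.
\end{itemize}

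Given this, the braid relations \cref{braidA,reflection,redbraid1,redbraid2} together with the final remark in the proof of \cref{lem:spanpart} identify the composite with $D_{w,\redkappa'}$ modulo diagrams with fewer crossings.

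I expect the main obstacle to be the bookkeeping in this last identification. One must ensure that the concatenation of one-strand moves never creates a pair of black strands crossing twice. If it did, the composite would not be reduced: applying \cref{doublecross} would produce a polynomial factor and shorter terms, and one would have to check that these are controlled by the induction. The correction terms coming from \cref{redbraid1} and \cref{reflection} (sums of dotted diagrams) have fewer crossings and are absorbed by induction, so they cause no trouble. To avoid non-reduced composites, I would first try to order the moves by the relative order of the final positions, processing strands by their targets in $\nu'$ read from the mirror outward. A strand moved later then never needs to pass an already placed strand that it does not also cross in $w$. This should make the composite reduced, because its crossings then match the inversions of $w$ (including the signed, mirror inversions).
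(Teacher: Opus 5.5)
Your overall plan (reduce to reduced dot-free diagrams, factor each one into long crossings, long reflections and admissible crossings through objects of $\Omega$, and induct modulo fewer crossings) is the same as the paper's. You also correctly locate the crux in the reducedness of the composite. However, the ordering you propose there fails, so the key step is not established.

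The source of the error is your second bullet. In the half picture, a long reflection crosses every black strand between the mirror and its source on the way in, and every black strand between the mirror and its target on the way out. So if strands are placed from the mirror outward, a later strand that reflects must pass each already-placed strand near the mirror twice. In $w$, however, it may only be supposed to cross that strand's mirror image.

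Here is a concrete counterexample. Take $i\neq j$ in $Q_0^{(+)}$ with an arrow $j\to\tau i$ (e.g.\ $j=2$, $i=1$ in the type AIII example). Let $\nu$ have $\tau i$ in interval $2$ and $j$ in interval $3$, and let $\nu'$ have $j$ in interval $1$ and $i$ in interval $3$. A reduced diagram has exactly one black--black crossing.
\begin{itemize}
\item Your order first moves $j$ from interval $3$ to interval $1$, crossing the $\tau i$-strand.
\item The long reflection of the $\tau i$-strand then crosses $j$ again on its way to the mirror.
\end{itemize}
The two labels have opposite parity, so your first bullet does not apply. By \cref{doublecross} the bigon produces $f_{\tau i,j}(y_1,y_2)=y_1-y_2$. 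The composite is therefore congruent, modulo lower terms, to $D_{w,\redkappa'}$ times a nonconstant linear polynomial, not to $D_{w,\redkappa'}$ itself. Your initial left-to-right order fails in the same way, e.g.\ for $i$ moving from interval $1$ to $5$ and $\tau j$ from interval $2$ to $6$, with $i\to\tau j$.

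Reversing the time order does not fix this. Take $\nu$ with $\tau i$ in interval $4$ and $j$ in interval $5$, and $\nu'$ with $i$ in interval $1$ and $j$ in interval $3$. Moving the leftmost-target strand last creates the same $\tau i$--$j$ bigon, while moving it first gives a reduced composite.

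You cannot fall back on the paper here. Its proof peels off exactly this strand (the one with leftmost top endpoint among non-vertical strands) and simply asserts the factorization. Consider $\tau i,j,\tau k$ in intervals $2,5,6$ going to $i,j,\tau k$ in intervals $3,1,4$, with arrows $j\to\tau i$ and $j\to\tau k$. There that strand cannot be peeled off at either end without such a bigon.

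What you need is a serialization in which every pair of black strands crosses exactly as often as in $D_{w,\redkappa'}$. Red--black crossings and mirror contacts of one-strand moves are automatically minimal, so reducedness reduces to a pairwise check on black strands. Such a check shows that the following order works:
\begin{enumerate}
\item first, the non-reflecting strands whose target interval lies to the right of their source interval, in decreasing order of starting position;
\item then, the reflecting strands, in increasing order of starting position;
\item finally, the non-reflecting strands moving toward the mirror, in increasing order of starting position.
\end{enumerate}
Strands that stay in their interval are not moved. Each moved strand is inserted into the slot of its target interval dictated by $\Omega$, with ties among equal labels chosen compatibly with $\nu$ and $\nu'$. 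With this order the composite is a reduced diagram for $w$ with the same endpoints. It therefore agrees with $D_{w,\redkappa'}$ modulo fewer crossings, and your induction goes through.
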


\begin{proof}
Let \(A\) be the locally unital subalgebra of
\(\oklrwzero^{\lamseq}\) generated by the idempotents \(e(\nu)\), for
\(\nu\in\Omega\), together with the diagrams in
\cref{Longreflect,Longcross,adcross,Longdot}. Since the source and
target of each of these generators belong to \(\Omega\), we have
\[
    A
    \subseteq
    \eint\oklrwzero^{\lamseq}\eint
    =
    \intklrw.
\]

We now prove the reverse inclusion. By \cref{lem:spanpart}, for every
\(\nu,\nu'\in\Omega\), the space
\[
    e(\nu')\oklrwzero^{\lamseq}e(\nu)
\]
is spanned by reduced Stendhal diagrams from \(\nu\) to \(\nu'\),
with arbitrary dots placed at the bottom. Since the bottom dots are
generated by the elements \(Y_j(\nu,k)\) in \cref{Longdot}, it is
enough to show that every reduced dot-free Stendhal diagram from
\(\nu\) to \(\nu'\) belongs to \(A\).

Fix such a diagram \(D\). We argue by induction on the number of
non-vertical black strands of \(D\). If every black strand is
vertical, then the source and target differ only by permutations of
equal strands inside individual subintervals. Such permutations are
generated by the admissible crossings in \cref{adcross}, and hence
\(D\in A\).

Suppose now that \(D\) contains a non-vertical black strand. Choose a
black strand whose top endpoint is leftmost among the top endpoints
of all non-vertical black strands. Suppose that its bottom endpoint
is the \(l\)-th occurrence in the \(a\)-th subinterval and its top
endpoint is the \(l'\)-th occurrence in the \(b\)-th subinterval.

Assume first that this strand does not meet the mirror. Its label is
therefore unchanged. Since odd subintervals contain vertices of
\(Q_0^{(+)}\) and even subintervals contain vertices of
\(Q_0^{(-)}\), the integers \(a\) and \(b\) have the same parity.
The movement of this strand from its bottom position to its top
position is represented by a long crossing
\(
    C_{a,l}^{b,l'}(\widetilde\nu,i)
\)
for a suitable intermediate object \(\widetilde\nu\in\Omega\).

Assume instead that the strand meets the mirror. Since \(D\) is
reduced, it meets the mirror exactly once, and its label changes from
\(i\) to \(\tau i\), or from \(\tau i\) to \(i\). Thus \(a\) and
\(b\) have opposite parity. The movement of this strand is represented
by a long reflection
\(
    \rho_{a,l}^{b,l'}(\widetilde\nu,i)
\)
for a suitable intermediate object \(\widetilde\nu\in\Omega\).

In either case, the remaining strands form a reduced dot-free
Stendhal diagram \(D'\) with one fewer non-vertical black strand.
The diagram \(D\) is obtained from \(D'\) by composing with the
corresponding long crossing or long reflection, together with
admissible crossings between equal adjacent strands when necessary.

The intermediate objects remain in \(\Omega\). Indeed, removing a
letter from an increasing word and inserting it into the unique
position preserving the prescribed order again gives an increasing
word. For even subintervals, this uses the order on \(Q_0^{(-)}\)
induced by
\[
    i>j
    \quad\Longleftrightarrow\quad
    \tau i<\tau j.
\]
Any ambiguity caused by equal adjacent labels can be resolved by the
admissible crossings in \cref{adcross}.

By the induction hypothesis, \(D'\in A\). The corresponding long
crossing, long reflection, and admissible crossings also belong to
\(A\). Hence \(D\in A\).

It follows that every reduced dot-free Stendhal diagram between two
objects of \(\Omega\) belongs to \(A\). Adding arbitrary bottom dots
does not leave \(A\), so \cref{lem:spanpart} gives
\[
    e(\nu')\oklrwzero^{\lamseq}e(\nu)
    \subseteq
    A
\]
for all \(\nu,\nu'\in\Omega\). Therefore
\[
    \intklrw
    =
    \bigoplus_{\nu,\nu'\in\Omega}
    e(\nu')\oklrwzero^{\lamseq}e(\nu)
    \subseteq
    A.
\]
Thus \(A=\intklrw\).
\end{proof}

\subsection{Parity polynomial representations}
Recall that we defined a polynomial representation of oKLRW algebras in
\cref{prop:polyrep}. In this subsection, we introduce a parity polynomial
representation of \(\oklrwzero^{\lamseq}\) for later use.

Recall from \cref{eq:Pol,Polobj} that
\[
\Pol^\lamseq
=
\bigoplus_{(\bi,\redkappa)\in\obj{\klrw}^{\lamseq}}
\Pol(\bi,\redkappa,\lamseq).
\]
We also define
\[
\Polint
:=
\bigoplus_{\bi\in\Omega}
\Pol(\bi,\redkappa,\lamseq)
\]
(Here the information of $\redkappa$ and $\lamseq$ are already encoded in elements of $\Omega$).

\begin{definition}
    We define the parity polynomial representation
    \begin{equation}
        \label{eq:paritypol}
        \ctP^{\mathrm{par}}: \oklrwzero^{\lamseq} \to \End(\Pol^\lamseq)
    \end{equation}
    where the actions of the black dots, the black--black
crossings, and the reflections at the mirror are defined to be the same as $\ctP$,
(see \cref{Polklrw1,Polklrw2,Polreflect} respectively). For the red--black crossings, suppose that the
\(k\)-th black strand, labeled by \(i\), crosses the red strand labeled
by \(\rlam^{(r)}\). Then we define
\begin{equation}
    \label{paritycross}
\begin{aligned}
\ctP^{\mathrm{par}}
\Big(
\begin{tikzpicture}[baseline=.6,scale=1.2]
    \draw (0,0)[red,thick]
        \botlabel{\rlam^{\textcolor{black}{(r)}}}
        \braidup (.4,.4);
    \draw (.4,0)
        \botlabel{i}
        \braidup (0,.4);
\end{tikzpicture}
\Big)(f)
&=
\begin{cases}
Y_k^{\rlam^{(r)}_i}f,
&
r\equiv0\pmod 2,
\\[0.4em]
f,
&
r\equiv1\pmod 2,
\end{cases}
\\[1em]
\ctP^{\mathrm{par}}
\Big(
\begin{tikzpicture}[baseline=.6,scale=1.2]
    \draw (0,0)
        \botlabel{i}
        \braidup (.4,.4);
    \draw (.4,0)[red,thick]
        \botlabel{\rlam^{\textcolor{black}{(r)}}}
        \braidup (0,.4);
\end{tikzpicture}
\Big)(f)
&=
\begin{cases}
f,
&
r\equiv0\pmod 2,
\\[0.4em]
Y_k^{\rlam^{(r)}_i}f,
&
r\equiv1\pmod 2.
\end{cases}
\end{aligned}
\end{equation}
(All other vertical strands are omitted from the red--black crossing diagrams above.)
\end{definition}

Thus, the even red strands act according to the usual polynomial
representation as in \cref{prop:polyrep}, whereas the odd red strands act according to the
opposite convention. It is straightforward to check that $\ctP^{\mathrm{par}}$ defines an action of $\oklrwzero^\lamseq$ on $\Pol^\lamseq$. Indeed, the actions of all generators other than the red--black crossings are unchanged, so it suffices to verify the relations involving a red--black crossing. We observe that \cref{redbraid1}--\cref{reddoublecross} can all be verified locally with only one red strand.  It follows from similar arguments as in \cref{cor:faithfulpolyrep} that $\ctP^{\mathrm{par}}$ is faithful as well. Restricting to the subalgebra $\intklrw$, this also defines a faithful action of $\intklrw$ on $\Polint$.

Recall that, when the shift operators are moved to the left, the
framing coefficients in the polynomial actions of
\(\Gamma_{i,k}\) and \(\Gamma_{\tau i,k}\) are, respectively,
\[
p_{\tau i}(1-x_{i,k})
=
\prod_{c\in\bR_{\tau i}}
(1-x_{i,k}-c)
\qquad\text{
and
}\qquad
p_i(x_{i,k}+1)
=
\prod_{c\in\bR_i}
(x_{i,k}+1-c).
\]
We refer to the individual factors
\[
1-x_{i,k}-c,
\qquad
c\in\bR_{\tau i},
\qquad\text{
and
}\qquad
x_{i,k}+1-c,
\qquad
c\in\bR_i,
\]
as the \emph{framing factors} of \(\Gamma_{i,k}\) and
\(\Gamma_{\tau i,k}\), respectively.

\begin{remark}
The parity polynomial representation \(\ctP^{\mathrm{par}}\) is
essential to the relation between the truncated shifted twisted
Yangian and the interval oKLRW algebra. As will be seen in
\cref{prop:Theta,prop:Gamma}, the operators \(\Gamma_{i,k}\) and
\(\Gamma_{\tau i,k}\), which move the corresponding black strand in
opposite directions, both involve the framing factors defined above.
Thus, in order to realize the iYangian action diagrammatically
and, conversely, to recover the oKLRW generators from this action,
powers of the black dots must arise from red--black crossings in both
directions.

This differs from the untwisted setting of \cite[(4.2b)--(4.2c)]{KTWWY19},
where the framing polynomial occurs only in the action of \(F_i\), and
not in that of \(E_i\). Consequently, the framing factors can be
realized by red--black crossings in a single direction, so no
parity-dependent modification of the polynomial representation is
needed there.

In the twisted setting, the two parities of red strands encode the
framing factors occurring in \(\Gamma_{i,k}\) and
\(\Gamma_{\tau i,k}\), respectively. This feature will be crucial for
the equivalence in \cref{thm:equivalence}.
\end{remark}

\subsection{An equivalence relating $\ftrust$ and $\intklrw$}
Recall that $\ftrust$ is Morita equivalent to $\trust$ by \cref{thm:morita} and we have shown in \cref{thm:flag=ab} that $\ftrust$ can be generated by $\Gamma_{i,k}$ $(i\in Q_0,1\leq k \leq v_i)$ together with $\NH$ as a subalgebra of $e(\aleph)\tR e(\aleph)$.

Recall that  $\aleph$ is the sequence we defined by listing the vertices of $Q_0^{(+)}$ according to the chosen total order (from smallest to largest), with each vertex $i$ appearing exactly $v_i$ times and $m=\sum_{i\in Q_0^{(+)}}v_i=\sum_{i\in Q_0^{(-)}}v_{i}$. Suppose $\ba$ is an integral weight as in \cref{integral}. We define $\nu_\ba$ to be the unique sequence such that:
\begin{itemize}
\item $\nu_\ba\in \Omega$, i.e.  $e(\nu_\ba)\in \intklrw$;
 \item 
the multiplicity of $i\in Q_0^{(+)}$ between $\rlam^{(2n)}$ and $\rlam^{(2n+1)}$ is exactly 
the number of $1\leq k\leq v_i$ such that $a_{i,k}=n+\frac{1}{4}$ for all $n\geq 0$; 
 \item 
the multiplicity of $\tau i\in Q_0^{(-)}$ between $\rlam^{(2n+1)}$ and $\rlam^{(2n+2)}$ is exactly 
the number of $1\leq k\leq v_i$ such that $a_{i,k}=-n-\frac{3}4$ for all $n\geq 0$.
\end{itemize}
This defines a many-to-one map
\[
\{\text{integral weights } \ba\} \longrightarrow \{ \text{idempotents $\nu_\ba$ for } \intklrw \}.
\]
Moreover, this construction assigns to each pair \((i,k)\), with \(i\in Q_0^{(+)}\) and \(1\le k\le v_i\), a pair \((u_\ba(i,k),v_\ba(i,k))\) as follows:
\begin{itemize}
    \item We set
\[
u_{\ba}(i,k)=
\begin{cases}
2n+1, & \text{if } a_{i,k}=n+\frac{1}{4},\\
2n+2, & \text{if } a_{i,k}=-n-\frac{3}{4},
\end{cases}
\] for some \(n\geq 0\). This records the interval.
\item We define \(v_{\ba}(i,k)\) to be the number of pairs \((j,l)\) satisfying
\(
a_{j,l}=a_{i,k}
\)
and occurring no later than \((i,k)\) in the lexicographic order induced by the chosen total order on \(Q_0^{(+)}\). That is,
\[
v_{\ba}(i,k)
:=
\begin{cases}
\displaystyle
\#\Bigl\{(j,l)\ \Big|\
j<i,\ 1\leq l\leq v_j,\
a_{j,l}=a_{i,k}
\Bigr\}
+
\#\Bigl\{1\leq l\leq k\ \Big|\
a_{i,l}=a_{i,k}
\Bigr\},
&
a_{i,k}>0,
\\[1.2em]
\displaystyle
\#\Bigl\{(j,l)\ \Big|\
j>i,\ 1\leq l\leq v_j,\
a_{j,l}=a_{i,k}
\Bigr\}
+
\#\Bigl\{k\leq l\leq v_i\ \Big|\
a_{i,l}=a_{i,k}
\Bigr\},
&
a_{i,k}<0.
\end{cases}
\]
This records the position inside the interval.
\end{itemize}

\begin{example}
    Suppose we have $Q_0^{(+)}=\{i,j,k\}$ with $i<j<k$ and hence $\tau k<\tau j<\tau i$. Moreover, suppose that $v_i=3$ and $v_j=v_k=4$. Hence we have
    \[
    \aleph=(i,i,i,j,j,j,j,k,k,k,k).
    \]
    Suppose we have an integral weight
    \[
    \ba=\Big(\overbrace{\frac{1}{4},-\frac{3}{4},\frac{1}{4}}^i,\overbrace{-\frac{15}{4},-\frac{3}{4},-\frac{15}{4},\frac{9}{4}}^j,\overbrace{\frac{13}{4},\frac{9}{4},-\frac{7}{4},\frac{1}{4}}^k\Big)
    \]
    Thus we have
    \begin{gather*}
          u_\ba(i,1)=1,u_\ba(i,2)=2, u_\ba(i,3)=1,\\ v_\ba(i,1)=1,v_\ba(i,2)=2, v_\ba(i,3)=2,\\
            u_\ba(j,1)=8, u_\ba(j,2)=2, u_\ba(j,3)=8, u_\ba(j,4)=5, \\
            v_\ba(j,1)=2, v_\ba(j,2)=1, v_\ba(j,3)=1, v_\ba(j,4)=1, \\
            u_\ba(k,1)=7, u_\ba(k,2)=5, u_\ba(k,3)=4, u_\ba(k,4)=1, \\
            v_\ba(k,1)=1, v_\ba(k,2)=2, v_\ba(k,3)=1, v_\ba(k,4)=3,
    \end{gather*}
    Thus the corresponding idempotent of $\intklrw$ is 
    \[
    e(\nu_\ba)=
       \begin{tikzpicture}
        [anchorbase,scale=1.4]
        \mirror{0}{0}{1};
        \draw[red] (.3,0)\botlabel{} -- (.3,1)\toplabel{\rlam^{\textcolor{black}{(0)}}};
        \draw[-] (.7,0)\botlabel{i} -- (.7,1)
                 (1,0)\botlabel{i} -- (1,1)
                  (1.3,0)\botlabel{k} -- (1.3,1);
        \draw[red] (1.7,0)\botlabel{} -- (1.7,1)\toplabel{\rlam^{\textcolor{black}{(1)}}};
        \draw[-] (2.1,0)\botlabel{\tau j} -- (2.1,1)
                 (2.4,0)\botlabel{\tau i} -- (2.4,1);
        \draw[red] (2.8,0)\botlabel{} -- (2.8,1)\toplabel{\rlam^{\textcolor{black}{(2)}}};
        \draw[red] (3.3,0)\botlabel{} -- (3.3,1)\toplabel{\rlam^{\textcolor{black}{(3)}}};
        \draw[-] (3.7,0)\botlabel{\tau k} -- (3.7,1);
        \draw[red] (4.1,0)\botlabel{} -- (4.1,1)\toplabel{\rlam^{\textcolor{black}{(4)}}};
         \draw[-] (4.5,0)\botlabel{j} -- (4.5,1)
                 (4.8,0)\botlabel{k} -- (4.8,1);
        \draw[red] (5.2,0)\botlabel{} -- (5.2,1)\toplabel{\rlam^{\textcolor{black}{(5)}}};
        \draw[red] (5.7,0)\botlabel{} -- (5.7,1)\toplabel{\rlam^{\textcolor{black}{(6)}}};
        \draw[-] (6.1,0)\botlabel{k} -- (6.1,1);
        \draw[red] (6.5,0)\botlabel{} -- (6.5,1)\toplabel{\rlam^{\textcolor{black}{(7)}}};
        \draw[-] (6.9,0)\botlabel{\tau j} -- (6.9,1);
        \draw[-] (7.2,0)\botlabel{\tau j} -- (7.2,1);
        \draw[red] (7.6,0)\botlabel{} -- (7.6,1)\toplabel{\rlam^{\textcolor{black}{(8)}}};
        \draw[red] (8.1,0)\botlabel{} -- (8.1,1)\toplabel{\rlam^{\textcolor{black}{(9)}}};
        \node at (8.6,.5){$\cdots$};
    \end{tikzpicture}
    \]
\end{example}

Let $\intmodnil$ denote the category of finitely generated $\intklrw$-modules on which all dot generators act nilpotently.  
Recall from \cref{prop:intgen} that $\intklrw$ is generated by the idempotents $e(\nu)$, for $\nu\in \Omega$, together with the diagrams in \cref{Longreflect}--\cref{Longdot}.  
Accordingly, if $M\in \intmodnil$, then each generator of $\intklrw$ defines an operator on the corresponding idempotent summands of $M$; by abuse of notation, we denote these operators by the same symbols.  
For instance, each $Y_j(\nu,k)$ from \cref{Longdot} induces an operator
\begin{gather}
    Y_j(\nu,k)\colon e(\nu)M \to e(\nu)M
\end{gather}
given by the action of a dot on the strand labeled by $\bi^{(j)}_k$ while each admissible crossing $\sigma_j(\nu,k)$ induces an operator
\begin{gather}
    \sigma_j(\nu,k)\colon e(\nu)M \to e(\nu)M
\end{gather}
given by the action of the crossing of the black strands labeled by $\bi_{k}^{(j)}$ and $\bi_{k+1}^{(j)}$.

\begin{prop}\label{prop:Theta}
There is a functor  $\Theta \colon \intmodnil\to \fywtmod$, such that for any integral weight $\ba$ and $M\in \intmodnil$, we have
  \begin{equation}
  \label{eq:Theta}
W_{\ba}(\Theta(M)) =
e(\nu_\ba) M.
\end{equation}
\end{prop}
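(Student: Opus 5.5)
We construct $\Theta(M)$ directly, following \cite[Sect.~5]{KTWWY19}. As a vector space we set
\[
\Theta(M):=\bigoplus_{\ba\ \text{integral}} e(\nu_\ba)M ,
\]
so that a copy of $e(\nu)M$ appears once for each integral $\ba$ with $\nu_\ba=\nu$. By \cref{thm:flag=ab}, $\ftrust=\abtrust$ is generated by $P$, the divided differences in $\NH$, and the elements $\Gamma_{i,k},\Gamma_{\tau i,k}$. It therefore suffices to specify operators for these generators, check that they satisfy the relations of $\ftrust$, and then read off \cref{eq:Theta} and the weight-module property.

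\textbf{Action of $P$.} On the summand indexed by $\ba$, we let $x_{i,k}$ act by $a_{i,k}+Y_{u_\ba(i,k)}(\nu_\ba,v_\ba(i,k))$, with the sign convention $a_{\tau i,k}=-a_{i,k}$ on the even intervals. Since the dots act nilpotently on $M$, this makes $e(\nu_\ba)M$ exactly the generalized $\ba$-weight space, so \cref{eq:Theta} holds. Finite-dimensionality and finite generation follow from finite generation of $M$. Indeed, each $e(\nu)e_{\mathrm{int}}^{\lamseq}e(\nu)$ is finite over its dot subalgebra by \cref{oklrwbasis}, and nilpotence of the dots then gives finite-dimensional $e(\nu)M$.

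\textbf{Action of $\NH$.} For $s_{i,k}$ we split into cases according to $\ba$, as in KTWWY. If $a_{i,k}=a_{i,k+1}$, both strands lie in the same interval, and $\partial_{i,k}$ acts by the admissible crossing $\sigma_j(\nu_\ba,\cdot)$ of \cref{adcross}, up to a reordering by long crossings within the interval. If $a_{i,k}\neq a_{i,k+1}$, then $\partial_{i,k}$ must send the $\ba$-summand to the $\ba$- and $s_{i,k}\ba$-summands. We define it by the formal formula $(s_{i,k}-1)/(x_{i,k+1}-x_{i,k})$, where $s_{i,k}$ acts by the identity $e(\nu_\ba)M\to e(\nu_{s\ba})M$ (note $\nu_{s\ba}=\nu_\ba$). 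The inverse of $x_{i,k+1}-x_{i,k}$ is the power series
\[
\bigl(a_{i,k+1}-a_{i,k}+(\text{nilpotent})\bigr)^{-1},
\]
which is a finite sum because the dots are nilpotent.

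\textbf{Action of $\Gamma_{i,k}$ and $\Gamma_{\tau i,k}$.} The element $\Gamma_{i,k}$ shifts $x_{i,k}$ by $\hbar=1$, sending the $\ba$-summand to the $\ba+\epsilon_{i,k}$-summand. This moves the $(i,k)$ strand from interval $u$ to an adjacent position. Passing from $n+\tfrac14$ to $n+\tfrac54$ is a long crossing $C$ of \cref{Longcross} across two red strands. Passing from $-\tfrac34$ to $\tfrac14$, where the label changes, is the long reflection $\rho$ of \cref{Longreflect}. We let $\Gamma_{i,k}$ act by this long crossing or reflection, multiplied by explicit correction factors: the invertible power series in dots coming from the factors $\overline{X}$ and from the framing factors that are nonzero at $\ba$, together with reordering intertwiners.

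The framing factors that vanish at $\ba$ are precisely the ones absorbed by the red--black crossings. This uses the parity representation \cref{paritycross} and the definition of $\lamseq(\bR)$, which places a $\varpi_i$ in $\rlam^{(r)}$ exactly where $1-x_{i,k}-c$ (respectively $x_{i,k}+1-c$) vanishes. We define $\Gamma_{\tau i,k}$ symmetrically.

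\textbf{Verification of relations, and the main obstacle.} To check the relations we compare polynomial representations rather than relations, using faithfulness. Both $\abtrust\subset\End(P)$ (via \cref{thm:tRPol}) and $\intklrw$ (via the faithful $\ctP^{\mathrm{par}}$ on $\Polint$) act faithfully on polynomial modules. After completing $P$ at each integral $\ba$ and identifying $\widehat P_\ba$ with the completion of $\Pol(\nu_\ba)$ at zero via $x\mapsto a+Y$, the assignment above is designed so that each generator of $\ftrust$ acts on $\bigoplus_\ba\widehat P_\ba$ exactly as the corresponding combination of $\intklrw$ generators acts on completed $\Polint$. Any relation among the $\ftrust$ generators then gives a relation in the completion $\widehat{\intklrw}$ by faithfulness, and hence holds on every nilpotent module $M$.

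The main obstacle is this matching computation. We must show that the leftover framing and $\overline X$ factors in the polynomial formula for $\Gamma_{i,k}$ become invertible at $\ba$ exactly when they do not correspond to a red--black crossing in $C$ or $\rho$. This requires the $\tfrac14/\tfrac34$ bookkeeping of $z_{i,n}$, including the odd/even parity conventions and the sign $(-Y)^{\mu_i}$ at the mirror. Faithfulness after completion also needs justification: one applies \cref{cor:faithfulpolyrep} to the parity representation and argues that completion at the dots preserves it, as in \cite[Lem.~5.x]{KTWWY19}.

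Functoriality of $\Theta$ on morphisms is immediate, since all operators are built from the $\intklrw$-action.
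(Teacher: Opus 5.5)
Your overall route is the paper's: $\Theta(M)=\bigoplus_{\ba}e(\nu_\ba)M$, with generators taken from \cref{thm:flag=ab}. You use dots and admissible crossings (or the formal divided-difference formula) for $P$ and the nilHecke generators, and long crossings and long reflections with invertible corrections for $\Gamma_{i,k},\Gamma_{\tau i,k}$. The relations are then checked through $\widehat P_\ba\cong\widehat{\Pol}(\nu_\ba)$ and faithfulness of the parity representation.

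\textbf{The gap: the direction of $\Gamma_{i,k}$ is reversed.} This breaks exactly the matching you call the main obstacle. $\Gamma_{i,k}$ acts on $P$ as a polynomial times $d_{i,k}$, so $x_{i,k}\Gamma_{i,k}=\Gamma_{i,k}(x_{i,k}-1)$ holds in $\ftrust$. This forces $\Gamma_{i,k}(W_\ba)\subseteq W_{\ba^-}$: it lowers $a_{i,k}$ by $1$ and cannot send the $\ba$-summand to the $\ba+\epsilon_{i,k}$-summand. The correct assignment is:
\begin{enumerate}
\item for $a_{i,k}=n+\frac14>\frac14$, a long crossing from interval $2n+1$ to interval $2n-1$;
\item for $a_{i,k}=\frac14$, the long reflection from interval $1$ to interval $2$ (label $i\mapsto\tau i$);
\item for $a_{i,k}=-n-\frac34$, a long crossing from interval $2n+2$ to interval $2n+4$.
\end{enumerate}
$\Gamma_{\tau i,k}$ makes the reverse moves.

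With your assignment the red strands absorb the wrong factors. Moving the $i$-strand rightward out of interval $2n+1$ crosses the odd strand $\rlam^{(2n+1)}$ from left to right. By \cref{paritycross} this contributes $Y^{z_{\tau i,-n-1}}$, which counts elements of $\bR_i$. Those are the framing factors $x_{i,k}+1-c$ of $\Gamma_{\tau i,k}$, not the factors $1-x_{i,k}-c$, $c\in\bR_{\tau i}$, of $\Gamma_{i,k}$. Swapping the roles of $\Gamma_{i,k}$ and $\Gamma_{\tau i,k}$ repairs this.

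\textbf{The deferred bookkeeping.} After the fix, this is what the paper carries out. The factor $1-x_{i,k}-c$ has zero constant term on $W_\ba$ exactly when $c=1-a$, and the multiplicity of that value is $z_{i,n}=\rlam^{(2n)}_i$. Only the right-to-left crossing with the even strand $\rlam^{(2n)}$ contributes, giving $Y^{z_{i,n}}$, while the vanishing factors themselves equal $-(x_{i,k}-a)=-Y$. This is the source of the sign $(-1)^{\#\{c\in\bR_{\tau i}\mid c=1-a\}}$ in \cref{taco3}. The factor $(-Y)^{\mu_i}$ at the mirror plays no role, since the framing weight is $\bzero$.

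\textbf{Smaller points.} In the equal-weight nilHecke case no reordering is needed. The strands for $(i,k)$ and $(i,k+1)$ are adjacent in one interval, in reversed order on even intervals. Together with $x_{i,k}-a=-Y$ there, this is what makes the admissible crossing equal $\partial_{i,k}$.

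Do keep your invertible $\overline X$-corrections for strands the long diagram does not cross. Also keep your intertwiners, so that the moving strand never passes an equally labelled strand, which would give a divided difference rather than a permutation. The paper's displayed formulas \cref{taco3,taco4} list only the surviving framing factors, so these corrections are worth making explicit.
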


\begin{proof}
    The strategy of the proof is inspired by the one of \cite[Lem. 5.17]{KTWWY19}. Let $M\in \intmodnil$, we first define a vector space by 
    \[
   \Theta(M):=
\bigoplus_{\ba\ \mathrm{integral}}
e(\nu_\ba)M\delta_\ba.
    \]
    where $\delta_\ba$ is a formal symbol, this formal symbol is introduced to distinguish between different integral weights $\ba$ which give rise to the same idempotent $e(\nu_\ba)$.

    By \cref{thm:flag=ab}, the algebra $\ftrust$ is generated by \cref{Gammai,Gammati} together with the nil-Hecke algebra $\NH$. In order to define an action of $\ftrust$ on $\Theta(M)$, we define the following operators (for any $v\in M$):
    \begin{itemize}
        \item For $y_{i,k}=y_{i,k}(\aleph)\in \NH$ where $i\in Q_0^{(+)}, 1\leq k \leq v_i$, it acts by 
       \begin{equation}
\label{taco1}
y_{i,k}v\delta_\ba
=
\begin{cases}
\left(
Y_{u_\ba(i,k)}
\bigl(\nu_\ba,v_\ba(i,k)\bigr)
+a_{i,k}
\right)v\delta_\ba,
& a_{i,k}>0,
\\[1.2em]
\left(
-Y_{u_\ba(i,k)}
\bigl(\nu_\ba,v_\ba(i,k)\bigr)
+a_{i,k}
\right)v\delta_\ba,
& a_{i,k}<0.
\end{cases}
\end{equation}
        Note that this has the correct eigenvalue, since
  $Y_{u_{\ba}(i,k)}(\nu_\ba,v_{\ba}(i,k))$ is nilpotent. Indeed, when \(a_{i,k}>0\), the corresponding strand is labeled by \(i\),
so its dot records the nilpotent part \(x_{i,k}-a_{i,k}\). When
\(a_{i,k}<0\), the corresponding strand is labeled by \(\tau i\), and hence its dot records
\(
x_{\tau i,k}-a_{\tau i,k}
=
-\bigl(x_{i,k}-a_{i,k}\bigr).
\)
  \item For $\sigma_{i,k}=\sigma_{i,k}(\aleph)\in \NH$, we must have $\aleph_{i,k}=\aleph_{i,k+1}$ and 
\begin{equation}
\label{taco2}
\sigma_{i,k}(v\delta_\ba)
=
\begin{cases}
\displaystyle
\sigma_{u_\ba(i,k)}
\bigl(\nu_\ba,v_\ba(i,k)\bigr)v\delta_\ba,
&a_{i,k}=a_{i,k+1}>0,
\\[1em]
\displaystyle
\sigma_{u_\ba(i,k)}
\bigl(\nu_\ba,v_\ba(i,k+1)\bigr)v\delta_\ba,
&a_{i,k}=a_{i,k+1}<0,
\\[1em]
\begin{aligned}
\left(
\widetilde Y^\ba_{i,k+1}
-
\widetilde Y^\ba_{i,k}
+
a_{i,k}-a_{i,k+1}
\right)^{-1}
v\delta_\ba
\\
+
\left(
\widetilde Y^\ba_{i,k}
-
\widetilde Y^\ba_{i,k+1}
+
a_{i,k}-a_{i,k+1}
\right)^{-1}
v\delta_{s_{i,k}\ba},
\end{aligned}
&a_{i,k}\neq a_{i,k+1},
\end{cases}
\end{equation}
where $\widetilde Y^\ba_{i,r}:=\sgn(a_{i,r})Y_{u_{\ba}(i,r)}(\nu_\ba,v_{\ba}(i,r))$. When \(a_{i,k}=a_{i,k+1}\), the two coordinates lie in the same
subinterval. Our ordering convention ensures that the corresponding
admissible crossing agrees with the nilHecke divided difference
\(\sigma_{i,k}\). When \(a_{i,k}\neq a_{i,k+1}\), each inverse in
\cref{taco2} is taken on the generalized weight space indicated by the
formal symbol to its right. Its constant term is
\(\pm(a_{i,k}-a_{i,k+1})\neq0\), while its remaining part lies in the maximal ideal of the completion.
Hence the inverse is well-defined as a formal power series.
  \item  For fixed $i\in Q_0^{(+)},1\leq k\leq v_i$, we define $\ba^\pm$ by setting
  \[
  a^{\pm}_{j,l}=a_{j,l}\pm\delta_{i,j}\delta_{k,l}
  \]
  Then $\Gamma_{i,k}$ acts by (recall \cref{Longreflect,Longcross})
  \begin{gather}
\label{taco3}
\Gamma_{i,k}(v\delta_\ba)
=
\begin{cases}
\displaystyle
(-1)^{
\#\{c\in\bR_{\tau i}\mid c=1-a_{i,k}\}
}
C_{u_\ba(i,k),v_\ba(i,k)}
 ^{u_{\ba^-}(i,k),v_{\ba^-}(i,k)}
(\nu_\ba,i)
\\
\displaystyle\hspace{1.5cm}\cdot
\left(
\prod_{\substack{c\in\bR_{\tau i}\\
c\neq1-a_{i,k}}}
\left(
1-a_{i,k}-c
-
Y_{u_\ba(i,k)}
\bigl(\nu_\ba,v_\ba(i,k)\bigr)
\right)v
\right)
\delta_{\ba^-},
&
a_{i,k}>\dfrac14,
\\[2em]
\displaystyle
\rho_{u_\ba(i,k),v_\ba(i,k)}
 ^{u_{\ba^-}(i,k),v_{\ba^-}(i,k)}
(\nu_\ba,i)
\\
\displaystyle\hspace{1.5cm}\cdot
\left(
\prod_{\substack{c\in\bR_{\tau i}\\
c\neq\frac34}}
\left(
\frac34-c
-
Y_{u_\ba(i,k)}
\bigl(\nu_\ba,v_\ba(i,k)\bigr)
\right)v
\right)
\delta_{\ba^-},
&
a_{i,k}=\dfrac14,
\\[2em]
\displaystyle
C_{u_\ba(i,k),v_\ba(i,k)}
 ^{u_{\ba^-}(i,k),v_{\ba^-}(i,k)}
(\nu_\ba,\tau i)
\\
\displaystyle\hspace{1.5cm}\cdot
\left(
\prod_{\substack{c\in\bR_{\tau i}\\
c\neq1-a_{i,k}}}
\left(
1-a_{i,k}-c
+
Y_{u_\ba(i,k)}
\bigl(\nu_\ba,v_\ba(i,k)\bigr)
\right)v
\right)
\delta_{\ba^-},
&
a_{i,k}<0.
\end{cases}
\\[2em]
\label{taco4}
\Gamma_{\tau i,k}(v\delta_\ba)
=
\begin{cases}
\displaystyle
(-1)^{
\#\{c\in\bR_i\mid c=a_{i,k}+1\}
}
C_{u_\ba(i,k),v_\ba(i,k)}
 ^{u_{\ba^+}(i,k),v_{\ba^+}(i,k)}
(\nu_\ba,\tau i)
\\
\displaystyle\hspace{1.5cm}\cdot
\left(
\prod_{\substack{c\in\bR_i\\
c\neq a_{i,k}+1}}
\left(
a_{i,k}+1-c
-
Y_{u_\ba(i,k)}
\bigl(\nu_\ba,v_\ba(i,k)\bigr)
\right)v
\right)
\delta_{\ba^+},
&
a_{i,k}<-\dfrac34,
\\[2em]
\displaystyle
\rho_{u_\ba(i,k),v_\ba(i,k)}
 ^{u_{\ba^+}(i,k),v_{\ba^+}(i,k)}
(\nu_\ba,\tau i)
\\
\displaystyle\hspace{1.5cm}\cdot
\left(
\prod_{\substack{c\in\bR_i\\
c\neq\frac14}}
\left(
\frac14-c
-
Y_{u_\ba(i,k)}
\bigl(\nu_\ba,v_\ba(i,k)\bigr)
\right)v
\right)
\delta_{\ba^+},
&
a_{i,k}=-\dfrac34,
\\[2em]
\displaystyle
C_{u_\ba(i,k),v_\ba(i,k)}
 ^{u_{\ba^+}(i,k),v_{\ba^+}(i,k)}
(\nu_\ba,i)
\\
\displaystyle\hspace{1.5cm}\cdot
\left(
\prod_{\substack{c\in\bR_i\\
c\neq a_{i,k}+1}}
\left(
a_{i,k}+1-c
+
Y_{u_\ba(i,k)}
\bigl(\nu_\ba,v_\ba(i,k)\bigr)
\right)v
\right)
\delta_{\ba^+},
&
a_{i,k}>0.
\end{cases}
\end{gather}
In \cref{taco3,taco4}, the operator \(\Gamma_{i,k}\) changes the
 coordinate from \(a_{i,k}\) to \(a_{i,k}-1\), whereas
\(\Gamma_{\tau i,k}\) changes it from \(a_{i,k}\) to
\(a_{i,k}+1\). If the initial and final coordinates have the same
sign, the corresponding black strand remains on the same side of the
mirror, and the operator is represented by a long crossing. If their
signs are different, the strand meets the mirror once, and the
operator is represented by a long reflection.

The black--black crossings and the reflection at the mirror reproduce
the black--black factors in the polynomial actions of
\(\Gamma_{i,k}\) and \(\Gamma_{\tau i,k}\). The red--black crossings
in the corresponding long diagram account for the framing factors
whose constant terms vanish on the relevant generalized weight space through the parity polynomial representation \cref{eq:paritypol}.

Consequently, in each case of \cref{taco3,taco4}, all framing factors
with zero constant term are already incorporated into the
corresponding long crossing or long reflection. The products displayed
in \cref{taco3,taco4} contain exactly the remaining framing factors.
Each of these factors has nonzero constant term on the relevant
generalized weight space and is therefore invertible in its
completion. This invertibility will be used in the construction of
the inverse functor in \cref{prop:Gamma}.
    \end{itemize}

It remains to verify that the operators defined above give a
well-defined action of \(\ftrust\). For an integral weight \(\ba\), let
\[
\widehat P_{\ba}
:=
\varprojlim_N
P/
\bigl(x_{i,k}-a_{i,k}\mid
i\in Q_0^{(+)},\ 1\leq k\leq v_i\bigr)^N
\]
be the completion of \(P\) at the maximal ideal corresponding to
\(\ba\). Similarly, for \(\nu\in\Omega\), let
\(\widehat{\Pol}(\nu)\) denote the completion of the
\(\nu\)-summand of the parity polynomial representation of
\(\intklrw\) at the ideal generated by all its dot variables.

For every integral weight \(\ba\), the change of variables
\[
x_{i,k}-a_{i,k}
\longmapsto
\begin{cases}
\displaystyle
Y_{u_\ba(i,k)}
\bigl(\nu_\ba,v_\ba(i,k)\bigr),
& a_{i,k}>0,
\\[1em]
\displaystyle
-
Y_{u_\ba(i,k)}
\bigl(\nu_\ba,v_\ba(i,k)\bigr),
& a_{i,k}<0,
\end{cases}
\]
defines an isomorphism
\[
\Phi_\ba\colon
\widehat P_\ba
\xrightarrow{\sim}
\widehat{\Pol}(\nu_\ba).
\]
Taking the direct sum over all integral weights gives an isomorphism
\[
\Phi
:=
\bigoplus_{\ba\ \mathrm{integral}}\Phi_\ba
\colon
\bigoplus_{\ba\ \mathrm{integral}}\widehat P_\ba
\xrightarrow{\sim}
\bigoplus_{\ba\ \mathrm{integral}}
\widehat{\Pol}(\nu_\ba)\delta_\ba.
\]

We now compare the polynomial actions under \(\Phi\).
Multiplication by \(x_{i,k}\) is carried to the operator in
\cref{taco1}. If \(a_{i,k}=a_{i,k+1}\), the ordering convention in the
corresponding subinterval identifies the divided difference
\(\sigma_{i,k}\) with the admissible crossing appearing in
\cref{taco2}. If \(a_{i,k}\neq a_{i,k+1}\), the usual divided
difference formula becomes the second case of \cref{taco2}. The
displayed denominator is invertible in the completion, since its
constant term is \(a_{i,k}-a_{i,k+1}\neq0\).

It remains to compare the operators \(\Gamma_{i,k}\) and
\(\Gamma_{\tau i,k}\). By their polynomial actions computed in
\cref{sec:diagiGKLO}, their shift parts change the \(k\)-th coordinate
by
\[
a_{i,k}\longmapsto a_{i,k}-1
\qquad\text{and}\qquad
a_{i,k}\longmapsto a_{i,k}+1,
\]
respectively. Moreover, the relations
\[
p_{\tau i}(-x_{i,k})d_{i,k}
=
d_{i,k}p_{\tau i}(1-x_{i,k}),
\qquad
p_i(x_{i,k})d_{i,k}^{-1}
=
d_{i,k}^{-1}p_i(x_{i,k}+1)
\]
express their framing coefficients in terms of the source variables.

Recall that the polynomial actions of
\(\Gamma_{i,k}\) and \(\Gamma_{\tau i,k}\) in
\cref{sec:diagiGKLO} were obtained by composing the local polynomial
actions of the black--black crossings and, when the strand meets the
mirror, the reflection occurring in the corresponding diagram.
The same local black--black crossings and reflection occur in the
long crossings and long reflections in \cref{taco3,taco4}.
Thus they reproduce the part of the polynomial action which is
independent of the framing parameters. It remains only to compare the
framing factors with the red--black crossings.

We explain this for \(\Gamma_{i,k}\); the argument for
\(\Gamma_{\tau i,k}\) is analogous. Set \(a=a_{i,k}\). A framing
factor
\[
1-x_{i,k}-c,
\qquad c\in\bR_{\tau i},
\]
has zero constant term on the generalized \(\ba\)-weight space
precisely when \(c=1-a\). For example, suppose that
\(a=n+\frac14>\frac14\). Then
\[
1-a=\frac34-n,
\]
so the multiplicity of \(c=1-a\) in \(\bR_{\tau i}\) is \(z_{i,n}\),
which, by \cref{eq:lamseq}, is the coefficient of \(\varpi_i\) in
\(\rlam^{(2n)}\). As the \(i\)-strand moves from the subinterval
corresponding to \(a\) to that corresponding to \(a-1\), it crosses
two red strands; by \cref{paritycross}, only its right-to-left
crossing with the even red strand \(\rlam^{(2n)}\) contributes, and
this contribution is
\(
Y_{u_\ba(i,k)}
\bigl(\nu_\ba,v_\ba(i,k)\bigr)^{z_{i,n}}.
\)
Under \(\Phi_\ba\), \(x_{i,k}-a=Y_{u_\ba(i,k)}
\bigl(\nu_\ba,v_\ba(i,k)\bigr)\), and hence the product of the
framing factors with \(c=1-a\) is
\(
\left(
-
Y_{u_\ba(i,k)}
\bigl(\nu_\ba,v_\ba(i,k)\bigr)
\right)^{z_{i,n}}.
\)
This is exactly the dot contribution above together with the sign
\(
(-1)^{\#\{c\in\bR_{\tau i}\mid c=1-a\}}
\)
appearing in the first case of \cref{taco3}. The cases
\(a=\frac14\) and \(a<0\) are checked in the same way, using the
corresponding long reflection and the left-to-right crossing with an
odd red strand, respectively. Thus all framing factors with zero
constant term are already contained in the corresponding long
crossing or long reflection. The products displayed explicitly in
\cref{taco3,taco4} are exactly the remaining framing factors.
Therefore, after applying \(\Phi\), the polynomial actions of
\(\Gamma_{i,k}\) and \(\Gamma_{\tau i,k}\) agree with the operators
defined in \cref{taco3,taco4}.

Consequently, conjugation by \(\Phi\) identifies the polynomial action
of every generator of \(\ftrust\) with the operator defined above on
the completed polynomial representation of \(\intklrw\). Since the parity
polynomial representation of \(\intklrw\) is faithful, its dot-adic
completion is faithful as well. It follows that all relations
satisfied by the generators of \(\ftrust\) are satisfied by the
operators in \cref{taco1,taco2,taco3,taco4}.

Now let \(M\in\intmodnil\). On each summand \(e(\nu)M\), the commuting
dot operators are nilpotent. Hence every power series with nonzero
constant term, and in particular every inverse appearing in
\cref{taco2}, acts by a finite sum. Therefore the above completed
operators induce well-defined operators on
\[
\Theta(M)
=
\bigoplus_{\ba\ \mathrm{integral}}
e(\nu_\ba)M\delta_\ba,
\]
and hence define a \(\ftrust\)-module structure on \(\Theta(M)\).

By \cref{taco1}, the generalized \(\ba\)-weight space is exactly
\[
W_\ba(\Theta(M))
=
e(\nu_\ba)M\delta_\ba
\cong e(\nu_\ba)M.
\]
These generalized weight spaces are finite-dimensional. Indeed, the
spaces of diagrams between two fixed interval idempotents are finite
rank over the polynomial algebra generated by the dots, while the dots
act nilpotently on \(M\).

For a morphism \(f\colon M\to M'\) in \(\intmodnil\), define
\[
\Theta(f)(v\delta_\ba):=f(v)\delta_\ba.
\]
The formulas \cref{taco1}--\cref{taco4} show that \(\Theta(f)\) commutes with the action of
all generators of \(\ftrust\). Thus \(\Theta\) is a functor
\[
\Theta\colon\intmodnil\longrightarrow\fywtmod,
\]
and \cref{eq:Theta} follows.
\end{proof}

For an integral weight \(\ba\), we say that \(\ba\) is \emph{weakly increasing} if
\[
   \aleph_{i,k}=\aleph_{i,k+1}
   \quad\Longrightarrow\quad
   a_{i,k}\leq a_{i,k+1}.
\]

For \(\nu\in\Omega\), we define an integral weight \(\ba(\nu)\) as follows.
Each black strand of \(\nu=(\bi,\redkappa)\) is naturally indexed by a pair
\((r,p)\), where \(r\geq 1\) is the index of the subinterval and
\(1\leq p\leq \ell_r\) records its position from left to right inside
\(\bi^{(r)}\). Thus the corresponding label is \(\bi^{(r)}_p\). By \cref{Omega}, for each \(i\in Q_0^{(+)}\), there are precisely \(v_i\)
pairs \((r,p)\) such that \({\bi^{(r)}_p}=i\) or \(\tau i
\). We assign to each pair \((r,p)\) a number \(a'_{r,p}(\nu)\in \frac14+\mathbb Z\)
by
\[
   a'_{r,p}(\nu)=
   \begin{cases}
   n+\frac14, & r=2n+1,\\[2mm]
   -n-\frac34, & r=2n+2.
   \end{cases}
\]
For each fixed \(i\in Q_0^{(+)}\), consider the set
\[
   I_i(\nu):=\{(r,p)\mid \bi^{(r)}_p\in\{i,\tau i\}\}.
\]
We order \(I_i(\nu)\) as follows. First, we arrange the pairs in nondecreasing
order of the values \(a'_{r,p}(\nu)\). If two pairs lie in the same subinterval
\(\bi^{(t)}\), then we use the left-to-right order inside
\(\bi^{(t)}\) to break ties when \(t\) is odd, and the reverse
left-to-right order to break ties when \(t\) is even. Let
\[
   (r_{i,1},p_{i,1}),\ldots,(r_{i,v_i},p_{i,v_i})
\]
be the resulting ordered list. We define
\[
   a_{i,k}(\nu):=a'_{r_{i,k},p_{i,k}}(\nu).
\]
Doing this for all \(i\in Q_0^{(+)}\), we obtain a uniquely determined weakly
increasing integral weight, denoted by
\[
   \ba(\nu)=\bigl(a_{i,k}(\nu)\bigr)_{i\in Q_0^{(+)},\,1\leq k\leq v_i}.
\]
We also record this rearrangement by the assignment
\[
   \eta_\nu(r_{i,k},p_{i,k})=(i,k).
\]
Equivalently, \(\eta_\nu(r,p)=(i,k)\) means that the black strand originally
lying in position \(p\) of the subinterval \(\bi^{(r)}\) contributes to the
coordinate \(a_{i,k}(\nu)\). For later use, we also define 
\[
\eta_\nu^\tau(r,p):=(\tau i,k)
\]
when \(r,p\) is the pair such that \(\eta_\nu(r,p)=(i,k)\).
\begin{example}
    Suppose we have $Q_0^{(+)}=\{i,j,k\}$ with $i<j<k$ and hence $\tau k<\tau j<\tau i$. Moreover, suppose that $v_i=3$ and $v_j=v_k=4$. Hence we have
    \[
    \aleph=(i,i,i,j,j,j,j,k,k,k,k).
    \]
    Suppose $\nu\in \Omega$ such that
    \[
    e(\nu)=
    \begin{tikzpicture}
        [anchorbase,scale=1.4]
        \mirror{0}{0}{1};
        \draw[red] (.3,0)\botlabel{} -- (.3,1)\toplabel{\rlam^{\textcolor{black}{(0)}}};
        \draw[-] (.7,0)\botlabel{i} -- (.7,1)
                 (1,0)\botlabel{i} -- (1,1)
                  (1.3,0)\botlabel{k} -- (1.3,1);
        \draw[red] (1.7,0)\botlabel{} -- (1.7,1)\toplabel{\rlam^{\textcolor{black}{(1)}}};
        \draw[-] (2.1,0)\botlabel{\tau j} -- (2.1,1)
                 (2.4,0)\botlabel{\tau i} -- (2.4,1);
        \draw[red] (2.8,0)\botlabel{} -- (2.8,1)\toplabel{\rlam^{\textcolor{black}{(2)}}};
        \draw[red] (3.3,0)\botlabel{} -- (3.3,1)\toplabel{\rlam^{\textcolor{black}{(3)}}};
        \draw[-] (3.7,0)\botlabel{\tau k} -- (3.7,1);
        \draw[red] (4.1,0)\botlabel{} -- (4.1,1)\toplabel{\rlam^{\textcolor{black}{(4)}}};
         \draw[-] (4.5,0)\botlabel{j} -- (4.5,1)
                 (4.8,0)\botlabel{k} -- (4.8,1);
        \draw[red] (5.2,0)\botlabel{} -- (5.2,1)\toplabel{\rlam^{\textcolor{black}{(5)}}};
        \draw[red] (5.7,0)\botlabel{} -- (5.7,1)\toplabel{\rlam^{\textcolor{black}{(6)}}};
        \draw[-] (6.1,0)\botlabel{k} -- (6.1,1);
        \draw[red] (6.5,0)\botlabel{} -- (6.5,1)\toplabel{\rlam^{\textcolor{black}{(7)}}};
        \draw[-] (6.9,0)\botlabel{\tau j} -- (6.9,1);
        \draw[-] (7.2,0)\botlabel{\tau j} -- (7.2,1);
        \draw[red] (7.6,0)\botlabel{} -- (7.6,1)\toplabel{\rlam^{\textcolor{black}{(8)}}};
        \draw[red] (8.1,0)\botlabel{} -- (8.1,1)\toplabel{\rlam^{\textcolor{black}{(9)}}};
        \node at (8.4,.5){$\cdots$};
    \end{tikzpicture}
    \]
    In this case, we have
    \begin{gather*}
        a'_{1,1}(\nu)=a'_{1,2}(\nu)=a'_{1,3}(\nu)=\frac{1}{4},\quad a'_{2,1}(\nu)=a'_{2,2}(\nu)=-\frac{3}{4},\quad a'_{4,1}(\nu)=-\frac{7}{4},\\
        \quad a'_{5,1}(\nu)=a'_{5,2}(\nu)=\frac{9}{4},\quad a'_{7,1}(\nu)=\frac{13}{4},\quad a'_{8,1}(\nu)=a'_{8,2}(\nu)=-\frac{15}{4}.
    \end{gather*}
    Hence the weakly increasing integral $\ba(\nu)$ is 
    \begin{align*}
    \ba(\nu)=&(\overbrace{a'_{2,2}(\nu),a'_{1,1}(\nu), a'_{1,2}(\nu)}^i, \overbrace{
a'_{8,2}(\nu),a'_{8,1}(\nu),
a'_{2,1}(\nu),a'_{5,1}(\nu)
}^{j},\overbrace{a'_{4,1}(\nu),a'_{1,3}(\nu),a'_{5,2}(\nu),a'_{7,1}(\nu)}^k)\\
        =&\Big(\overbrace{-\frac{3}{4},\frac{1}{4},\frac{1}{4}}^i,\overbrace{-\frac{15}{4},-\frac{15}{4},-\frac{3}{4},\frac{9}{4}}^j,\overbrace{-\frac{7}{4},\frac{1}{4},\frac{9}{4},\frac{13}{4}}^k\Big)
    \end{align*}
    Then the assignment $\eta_\nu$ is clear from the above equation. 
\end{example}

By construction, we have
\[
   \nu_{\ba(\nu)}=\nu
   \qquad (\nu\in\Omega),
\]
and
\[
   \ba(\nu_\ba)=\ba
\]
for every weakly increasing integral weight \(\ba\). Hence
\(\nu\mapsto\ba(\nu)\) is a bijection from \(\Omega\) to the set of
weakly increasing integral weights.

\begin{prop}\label{prop:Gamma}
There is a functor
\[
   \Gamma\colon\fywtmod\longrightarrow\intmodnil
\]
such that, for every \(N\in\fywtmod\) and every \(\nu\in\Omega\),
\begin{equation}
\label{eq:Gamma-idem}
   e(\nu)\Gamma(N)=W_{\ba(\nu)}(N).
\end{equation}
Equivalently,
\begin{equation}
\label{eq:Gamma}
   \Gamma(N)
   =
   \bigoplus_{\nu\in\Omega}W_{\ba(\nu)}(N)
   =
   \bigoplus_{\substack{\ba\ \mathrm{integral}\\
                        \ba\ \mathrm{weakly\ increasing}}}
   W_\ba(N).
\end{equation}
\end{prop}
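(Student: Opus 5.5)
We define $\Gamma(N)$ as the vector space in \cref{eq:Gamma}, with $e(\nu)$ acting as the projection onto $W_{\ba(\nu)}(N)$. We then let each generator of $\intklrw$ from \cref{prop:intgen} act by inverting the formulas \cref{taco1}--\cref{taco4} of \cref{prop:Theta}. Concretely, the dot $Y_{u}(\nu,v)$ on the strand with $\eta_\nu(u,v)=(i,k)$ acts by $\pm(x_{i,k}-a_{i,k}(\nu))$, with the sign given by the parity of $u$. This operator is nilpotent on $W_{\ba(\nu)}(N)$ because the weight spaces are finite-dimensional generalized eigenspaces. An admissible crossing $\sigma_j(\nu,k)$ acts by the divided difference $\sigma_{i,k}\in\NH$ for the corresponding adjacent coordinates; these have equal weight, and the ordering conventions of $\eta_\nu$ make them adjacent in $\aleph$.

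A long crossing or long reflection moves one strand from interval $a$ to interval $b$. Both endpoints correspond to weakly increasing weights $\ba(\nu)$ and $\ba(\nu')$, which differ by shifting a single coordinate by $\pm(b-a)/2$-many unit steps and then re-sorting. We define its action as a composite of three pieces: first an intertwiner-type element of $\NH$ (a product of $r_{i,k}$'s, localized on weight spaces) that brings the relevant coordinate into position; then the appropriate number of unit steps $\Gamma_{i,k}$ or $\Gamma_{\tau i,k}$; then a re-sorting intertwiner. Each unit step is multiplied by the inverse of the product of the framing factors with nonzero constant term, exactly as displayed in \cref{taco3,taco4}. Those factors are invertible on the generalized weight space since their constant terms are nonzero and the rest acts nilpotently. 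The intertwiners between distinct weights are invertible there for the same reason, using \cref{taco2}.

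We verify the relations by the same completion argument as in \cref{prop:Theta}, run in reverse. Under the isomorphisms $\Phi_\ba\colon\widehat P_\ba\cong\widehat{\Pol}(\nu_\ba)$, the operators just defined agree with the action of the corresponding diagrams of $\intklrw$ in the completed parity polynomial representation \cref{eq:paritypol}. This is precisely the computation already done there: black--black crossings and reflections reproduce the non-framing part of the $\Gamma$'s, and red--black crossings reproduce the framing factors with vanishing constant term. Since that representation is faithful on $\intklrw$ (after completion), and $N$ is a module over $\ftrust\subset\End(P)$, every relation of $\intklrw$ holds on $\Gamma(N)$. To make this rigorous we pass through $\bigoplus_\ba\widehat P_\ba$-modules: $W_\ba(N)$ is a module over the completed flag algebra on weight spaces, and the relations among the completed operators are identities in its completed polynomial representation.

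Finite generation of $\Gamma(N)$ follows from $N$ being finitely generated with finite-dimensional weight spaces, together with the fact that $\Omega$-summands detect all weights: every integral weight is $\Sigma$-conjugate to a weakly increasing one, and $\NH$-intertwiners identify the corresponding weight spaces. Functoriality is immediate, because $\ftrust$-module maps preserve weight spaces and commute with all the operators above; this gives \cref{eq:Gamma-idem}. The main obstacle is the well-definedness of the long-generator actions, namely showing that the composite of intertwiners, unit $\Gamma$-steps and inverted nonvanishing framing factors matches the long diagram exactly, with the correct signs, including in the re-sorting when several equal labels sit in the same interval. I would handle this by reducing, via the $\Phi_\ba$ identification, to the single-step case already checked in \cref{prop:Theta}, and inducting on $|b-a|$.
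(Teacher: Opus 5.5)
You have the outer structure right: the underlying space, the dots acting by $\pm(y_{i,k}-a)$ with the parity sign, admissible crossings acting by $\sigma_{i,k}$, the check via $\Phi_\ba$ and faithfulness of the completed parity representation, and functoriality all match the paper. The gap is in how you define the long crossings and long reflections.

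\textbf{Why your composite fails.} Intertwiners $r_{i,k}$ act on $P$ by transpositions. Each unit step $\Gamma_{i,k}$ or $\Gamma_{\tau i,k}$ acts as a shift times a polynomial. So, after $\Phi_\ba$, your composite just transports each dot variable along its strand and multiplies by a power series. It never produces a divided difference.

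However, whenever the moving strand of a long diagram crosses a strand carrying the same label, the diagram contains a nilHecke crossing. In the parity representation that crossing acts by a divided difference.

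\textbf{A concrete counterexample.} Take $v_i=2$, with one $i$-strand in the first subinterval and one in the third, and $\bR_i=\emptyset$ for simplicity. Consider the long crossing that moves the first strand to the fifth subinterval.
\begin{enumerate}
\item Your recipe assigns it $r_{i,1}\Gamma_{\tau i,1}\Gamma_{\tau i,1}$: two unit steps from $(\frac14,\frac54)$ to $(\frac94,\frac54)$, then re-sorting.
\item By isotopy, the same diagram equals a composite of three pieces: the elementary move into the third subinterval (landing to the left of the static strand), then the admissible crossing there, then the elementary move from the third to the fifth subinterval. Your own recipe assigns this composite $\Gamma_{\tau i,2}\,\sigma_{i,1}\,\Gamma_{\tau i,1}$.
\item In suitable coordinates $(u,w)$ on $P$, the first operator is $h\mapsto h(w,u)$. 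The second is $h\mapsto\bigl(h(w,u)-h(u,w)\bigr)/(w-u)$.
\end{enumerate}
So two equal elements of $\intklrw$ receive different operators, and $\Gamma(N)$ is not a module. Your claim that the composites agree with the diagrams under $\Phi_\ba$ is false for such generators. The induction on $|b-a|$ you sketch cannot repair this as long as the re-sorting is done with intertwiners.

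\textbf{What is missing: the paper's elementary generators.} Every long crossing or long reflection is a concatenation of admissible crossings and elementary pieces. The elementary pieces are moves between subintervals $r$ and $r\pm2$, together with the reflection between the first and second subintervals. In each, the moving strand is taken at the extreme position among the occurrences of its label, in both the source and the target subinterval. With this convention:
\begin{itemize}
\item an elementary move never crosses a strand of the same label;
\item shifting the coordinate by one keeps $\ba(\nu)$ weakly increasing, with the same index $(i,k)$, so no re-sorting is needed;
\item each elementary move is a single $\Gamma_{i,k}$ or $\Gamma_{\tau i,k}$, precomposed with the inverted non-vanishing framing factors;
\item every same-label crossing is carried by an admissible crossing, which acts by the divided difference $\sigma_{i,k}$ at equal weight, not by an intertwiner.
\end{itemize}

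A smaller error: for a long reflection, $b-a$ is odd, so the number of unit steps is not $\pm(b-a)/2$. Passing from $n+\frac14$ to $-m-\frac34$ through the mirror takes $n+m+1$ steps.
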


\begin{proof}
The proof is inspired by the one of \cite[Lem. 5.18]{KTWWY19}.
The idea of the constructions is to inverse the constructions in \cref{prop:Theta}. Let
\(N\in\fywtmod\), and define
\[
   \Gamma(N)
   :=
   \bigoplus_{\nu\in\Omega}W_{\ba(\nu)}(N).
\]
For each \(\nu\in\Omega\), the idempotent \(e(\nu)\) acts as the
projection onto the summand \(W_{\ba(\nu)}(N)\).

We first define the action of the dots and admissible crossings. Suppose that
\[
   \eta_\nu(r,p)=(i,k),
   \qquad
   a=a_{i,k}(\nu).
\]
Then the dot \(Y_r(\nu,p)\) acts on \(W_{\ba(\nu)}(N)\) by
\begin{equation}
\label{eq:Gamma-dot}
   Y_r(\nu,p)v
   =
   \begin{cases}
      (y_{i,k}-a)v, & r\text{ is odd},\\[1mm]
      (a-y_{i,k})v, & r\text{ is even}.
   \end{cases}
\end{equation}
These operators are nilpotent, since \(W_{\ba(\nu)}(N)\) is a generalized
weight space.

Now suppose that
\(
\bi^{(r)}_p=\bi^{(r)}_{p+1}.
\)
If \(r\) is odd, then there are \(i\in Q_0^{(+)}\) and \(k\) such that
\[
   \eta_\nu(r,p)=(i,k),
   \qquad
   \eta_\nu(r,p+1)=(i,k+1).
\]
If \(r\) is even, then the reverse ordering convention gives
\[
   \eta_\nu(r,p)=(i,k+1),
   \qquad
   \eta_\nu(r,p+1)=(i,k).
\]
In either case, the admissible crossing \(\sigma_r(\nu,p)\) acts by
\begin{equation}
\label{eq:Gamma-short-cross}
   \sigma_r(\nu,p)v=\sigma_{i,k}v.
\end{equation}

It remains to define the long crossings and long reflections. By
diagrammatic concatenation, every long crossing or long reflection is a
product of admissible crossings and the following elementary diagrams:
long crossings between the \(r\)-th and \((r+2)\)-nd subintervals, long
crossings between the \(r\)-th and \((r-2)\)-nd subintervals, and long
reflections between the first and second subintervals. Thus it suffices to
define these elementary operators.

Suppose first that a long crossing moves a strand from the \(r\)-th
subinterval to the \((r+2)\)-nd subinterval. Let its bottom and top labels be
\(\nu\) and \(\nu'\), respectively. We take the moving strand to be the
rightmost occurrence of its label in the source subinterval and the leftmost
occurrence of its label in the target subinterval. Suppose that
\[
\eta_\nu(r,p)
=
\eta_{\nu'}(r+2,p')
=
(i,k),
\qquad
a=a_{i,k}(\nu).
\]
If \(r\) is odd, then the moving strand is labeled by \(i\), and we define
\begin{equation}
\label{eq:Gamma-cross-right-odd}
\begin{aligned}
C_{r,p}^{r+2,p'}(\nu,i)v
:={}&
\Gamma_{\tau i,k}
\left(
\prod_{\substack{c\in\bR_i\\
c\neq a+1}}
(y_{i,k}+1-c)^{-1}v
\right).
\end{aligned}
\end{equation}
If \(r\) is even, then the moving strand is labeled by \(\tau i\), and we
define
\begin{equation}
\label{eq:Gamma-cross-right-even}
\begin{aligned}
C_{r,p}^{r+2,p'}(\nu,\tau i)v
:={}&
\Gamma_{i,k}
\left(
\prod_{\substack{c\in\bR_{\tau i}\\
c\neq1-a}}
(1-c-y_{i,k})^{-1}v
\right).
\end{aligned}
\end{equation}

Next suppose that a long crossing moves a strand from the \(r\)-th
subinterval to the \((r-2)\)-nd subinterval. We take the moving strand to be
the leftmost occurrence of its label in the source subinterval and the
rightmost occurrence of its label in the target subinterval. Suppose that
\[
   \eta_\nu(r,p)
   =
   \eta_{\nu'}(r-2,p')
   =
   (i,k),
   \qquad
   a=a_{i,k}(\nu).
\]
If \(r\) is odd, then \(a>\frac14\), the moving strand is labeled by \(i\),
and we define
\begin{equation}
\label{eq:Gamma-cross-left-odd}
\begin{aligned}
C_{r,p}^{r-2,p'}(\nu,i)v
:={}&
(-1)^{
\#\{c\in\bR_{\tau i}\mid c=1-a\}
}
\Gamma_{i,k}
\left(
   \prod_{\substack{c\in\bR_{\tau i}\\c\neq1-a}}
   (1-c-y_{i,k})^{-1}v
\right).
\end{aligned}
\end{equation}
If \(r\) is even, then \(a<-\frac34\), the moving strand is labeled by
\(\tau i\), and we define
\begin{equation}
\label{eq:Gamma-cross-left-even}
\begin{aligned}
C_{r,p}^{r-2,p'}(\nu,\tau i)v
:={}&
(-1)^{
\#\{c\in\bR_i\mid c=a+1\}
}
\Gamma_{\tau i,k}
\left(
   \prod_{\substack{c\in\bR_i\\c\neq a+1}}
   (y_{i,k}+1-c)^{-1}v
\right).
\end{aligned}
\end{equation}

Finally, consider the elementary long reflections. Suppose first that a
strand is moved from the first subinterval to the second. Let the bottom and
top labels be \(\nu\) and \(\nu'\), and suppose that
\[
   \eta_\nu(1,p)
   =
   \eta_{\nu'}(2,p')
   =
   (i,k).
\]
Thus the corresponding coordinate changes from \(\frac14\) to
\(-\frac34\). We define
\begin{equation}
\label{eq:Gamma-reflection-right}
\begin{aligned}
\rho_{1,p}^{2,p'}(\nu,i)v
:={}&
\Gamma_{i,k}
\left(
   \prod_{\substack{c\in\bR_{\tau i}\\c\neq\frac34}}
   (1-c-y_{i,k})^{-1}v
\right).
\end{aligned}
\end{equation}
Similarly, suppose that a strand is moved from the second subinterval to the
first and that
\[
   \eta_\nu(2,p)
   =
   \eta_{\nu'}(1,p')
   =
   (i,k).
\]
Thus the corresponding coordinate changes from \(-\frac34\) to
\(\frac14\). We define
\begin{equation}
\label{eq:Gamma-reflection-left}
\begin{aligned}
\rho_{2,p}^{1,p'}(\nu,\tau i)v
:={}&
\Gamma_{\tau i,k}
\left(
   \prod_{\substack{c\in\bR_i\\c\neq\frac14}}
   (y_{i,k}+1-c)^{-1}v
\right).
\end{aligned}
\end{equation}

All inverses appearing in
\cref{eq:Gamma-cross-right-odd,eq:Gamma-cross-right-even,
eq:Gamma-cross-left-odd,eq:Gamma-cross-left-even,
eq:Gamma-reflection-right,eq:Gamma-reflection-left}
are taken on the source generalized weight space. They are well-defined:
each denominator has nonzero constant term there, and its remaining part is
nilpotent. 

We now verify that these operators define an action of \(\intklrw\). For an
integral weight \(\ba\), recall the completion
\[
   \widehat P_\ba
   =
   \varprojlim_L
   P/
   \bigl(
      x_{i,k}-a_{i,k}
      \mid
      i\in Q_0^{(+)},\ 1\leq k\leq v_i
   \bigr)^L
\]
and the isomorphism from the proof of \cref{prop:Theta}
\[
   \Phi_\ba\colon
   \widehat P_\ba
   \xrightarrow{\sim}
   \widehat{\Pol}(\nu_\ba).
\]

Under \(\Phi_\ba\), the shifted variable \(x_{i,k}-a_{i,k}\) is identified
with the dot on the corresponding strand when \(a_{i,k}>0\), and with the
negative of that dot when \(a_{i,k}<0\). Consequently,
\cref{eq:Gamma-dot,eq:Gamma-short-cross} are the inverse change-of-variable
formulas corresponding to \cref{taco1,taco2}.

Likewise, the formulas
\cref{eq:Gamma-cross-right-odd,eq:Gamma-cross-right-even,
eq:Gamma-cross-left-odd,eq:Gamma-cross-left-even,
eq:Gamma-reflection-right,eq:Gamma-reflection-left}
are obtained by solving \cref{taco3,taco4} for the corresponding long
crossing or long reflection. For example, when \(r\) is odd and the strand
moves from the \(r\)-th to the \((r+2)\)-nd subinterval, the relevant case of
\cref{taco4} is
\[
\Gamma_{\tau i,k}(v)
=
C_{r,p}^{r+2,p'}(\nu,i)
\left(
\prod_{\substack{c\in\bR_i\\
c\neq a+1}}
(y_{i,k}+1-c)v
\right).
\]
The omitted factor with \(c=a+1\) is precisely the dot factor
contributed by the odd red strand crossed by the moving black strand
under the parity polynomial representation. Solving the displayed
identity for the long crossing gives
\cref{eq:Gamma-cross-right-odd}. The other five cases are obtained in the
same way.

Thus, after conjugation by the maps \(\Phi_\ba\), the operators defined above
agree with the polynomial action of the corresponding generators of
\(\intklrw\) on the completed parity polynomial representation. Since the parity
polynomial representation of \(\intklrw\) is faithful, its dot-adic
completion is faithful as well. Hence these operators satisfy all defining
relations of \(\intklrw\). On every generalized weight space of \(N\), the operators
\(y_{i,k}-a_{i,k}\) are nilpotent. Therefore every formal inverse appearing
above acts by a finite sum, so the completed formulas induce well-defined
operators on \(\Gamma(N)\). It follows that \(\Gamma(N)\) is an
\(\intklrw\)-module. 

Finally, if \(f\colon N\to N'\) is a morphism in \(\fywtmod\), define
\[
   \Gamma(f)
   :=
   \bigoplus_{\nu\in\Omega}
   f\big|_{W_{\ba(\nu)}(N)}.
\]
Since \(f\) commutes with the actions of \(y_{i,k}\), \(\sigma_{i,k}\),
\(\Gamma_{i,k}\), and \(\Gamma_{\tau i,k}\), it commutes with all the
operators defined above. Therefore \(\Gamma(f)\) is
\(\intklrw\)-linear, and \(\Gamma\) is a functor. This proves
\cref{eq:Gamma-idem,eq:Gamma}.
\end{proof}

\begin{theorem}
\label{thm:equivalence}
    The functors \(\Theta\) and $\Gamma$ give mutually inverse equivalences
    \[
        \fywtmod \cong \intmodnil.
    \]
\end{theorem}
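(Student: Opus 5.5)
We will show that $\Gamma\circ\Theta\cong\mathrm{id}$ on $\intmodnil$ and $\Theta\circ\Gamma\cong\mathrm{id}$ on $\fywtmod$, following the pattern of \cite[Thm.~5.19]{KTWWY19}.

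\textbf{The composite $\Gamma\circ\Theta$.} For $M\in\intmodnil$ and $\nu\in\Omega$, \cref{eq:Gamma-idem} and \cref{eq:Theta} give
\[
e(\nu)\Gamma(\Theta(M))=W_{\ba(\nu)}(\Theta(M))=e(\nu_{\ba(\nu)})M\,\delta_{\ba(\nu)}=e(\nu)M\,\delta_{\ba(\nu)},
\]
using $\nu_{\ba(\nu)}=\nu$. Summing over $\nu$, the map $v\mapsto v\delta_{\ba(\nu)}$ is a vector space isomorphism $M\to\Gamma\Theta(M)$, natural in $M$ by construction of $\Theta(f)$ and $\Gamma(f)$. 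It remains to check that this map intertwines the generators of $\intklrw$ listed in \cref{prop:intgen}.
\begin{itemize}
\item For dots, \cref{eq:Gamma-dot} composed with \cref{taco1} returns $Y_r(\nu,p)$; the signs cancel in the even case.
\item For admissible crossings, \cref{eq:Gamma-short-cross} and the first two cases of \cref{taco2} agree, given the ordering conventions defining $\eta_\nu$ and $v_\ba$.
\item For elementary long crossings and reflections, substitute \cref{taco3,taco4} into \cref{eq:Gamma-cross-right-odd}--\cref{eq:Gamma-reflection-left}. The inserted inverse product cancels the displayed framing product, and the sign $(-1)^{\#\{\cdots\}}$ cancels as well. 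What remains is the diagram itself.
\end{itemize}
General long crossings and reflections are products of these elementary ones together with admissible crossings, so the action is determined by its values on the elementary diagrams.

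\textbf{The composite $\Theta\circ\Gamma$.} For $N\in\fywtmod$ we have $\Theta\Gamma(N)=\bigoplus_{\ba}e(\nu_\ba)\Gamma(N)\delta_\ba=\bigoplus_\ba W_{\ba(\nu_\ba)}(N)\delta_\ba$. Here $\ba(\nu_\ba)$ is the weakly increasing rearrangement of $\ba$, so it differs from $\ba$ by some $w\in\Sigma$. The required isomorphism $W_{\ba(\nu_\ba)}(N)\to W_\ba(N)$ is given by a product of intertwiners $r_w$ (see \cref{eq:intertw}) built from the $\sigma_{i,k}\in\NH$. These are invertible between generalized weight spaces whose relevant coordinates differ: on that locus $r_k^2$ acts by an invertible element $1-(y_{k+1}-y_k)^2$ times a unit, completed at $\ba$. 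Where coordinates coincide, we take the minimal-length $w$, so no equal coordinates are swapped.

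Assembling these maps gives a linear isomorphism $\Psi_N\colon\Theta\Gamma(N)\to N$, using $N=\bigoplus_\ba W_\ba(N)$ and finiteness of each weight space. We then check that $\Psi_N$ commutes with the generators of $\ftrust$ from \cref{thm:flag=ab}, namely $y_{i,k}$, $\sigma_{i,k}$, $\Gamma_{i,k}$ and $\Gamma_{\tau i,k}$. Naturality is clear because every ingredient is built from the $\ftrust$-action.

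\textbf{Main obstacle.} The hard step is the compatibility of $\Psi_N$ with $\Gamma_{i,k}$ and $\sigma_{i,k}$ when $\ba$ is not weakly increasing. For these checks I would reuse the faithful completed polynomial model of the two previous proofs. Under the isomorphisms $\Phi_\ba$, the twisting by $r_w$ corresponds on $\bigoplus_\ba\widehat P_\ba$ to the permutation isomorphism $\widehat P_{\ba(\nu_\ba)}\cong\widehat P_\ba$. That permutation isomorphism intertwines the polynomial actions of $\ftrust$ by equivariance; this is the relation $r_w\Gamma_{i,k}=\Gamma_{w(i,k)}r_w$ coming from $\Gamma_{i,k+1}=r_{i,k}\Gamma_{i,k}r_{i,k}$.

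Since the relevant formulas for $N$ are identical polynomial-in-generators expressions, the identity holds for $N$ as soon as it holds in the faithful completed model. This follows the argument of \cite[Lem.~5.17--5.18]{KTWWY19}. The two composite isomorphisms together give the equivalence.
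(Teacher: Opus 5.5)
Your treatment of $\Gamma\circ\Theta$ is the paper's argument. For $\Theta\circ\Gamma$ you use the same comparison map, built from intertwiners along the minimal-length permutation $\pi_\ba$ with $\pi_\ba\ba=\ba^\uparrow$. One side claim is wrong, though harmlessly. With the normalization \cref{eq:intertw}, $r_k=(y_{k+1}-y_k)\sigma_k+1$ acts on $P$ exactly as the permutation $s_k$, because $\sigma_k$ acts by $\partial_k$ (see \cref{thm:tRPol}). Hence $r_k^2=1$ in $\ftrust\subset\End(P)$. There is no factor $1-(y_{k+1}-y_k)^2$, and invertibility needs neither a completion nor a restriction to unequal coordinates.

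The genuine gap is the proof that this map is $\ftrust$-linear. You check the identities in the completed model $\bigoplus_\ba\widehat P_\ba$ and conclude they hold for every $N$ ``since the formulas are identical''. The only faithfulness available is that of $P$ over $\ftrust$, which holds by definition. The identities on $\Theta\Gamma(N)$ involve projections onto weight spaces and inverted power series. Transferring them from $\bigoplus_\ba\widehat P_\ba$ would need that module to be faithful over a completion of $\ftrust$. This is proved nowhere, and the paper has no basis theorem for $\ftrust$ to support it. Moreover, $\bigoplus_\ba\widehat P_\ba$ is not an object of $\fywtmod$, so $\Theta\Gamma$ is not even defined on it.

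The missing idea, which the paper uses, is that completions can be avoided altogether. For an elementary move, unwind \cref{taco3} on $\Gamma(N)$. The framing product $F$ there cancels exactly against the $F^{-1}$ built into \cref{eq:Gamma-cross-left-odd} (and likewise in the other elementary cases). So $\Gamma_{i,k}$ sends $\chi_{\pi_\ba}(v)\delta_\ba$ to $\Gamma_{i,l}\chi_{\pi_\ba}(v)\,\delta_{\ba^-}$, where $\pi_\ba(i,k)=(i,l)$. Two facts then finish the case:
the identity $\chi_\pi\Gamma_{i,k}=\Gamma_{i,l}\chi_\pi$ holds in $\ftrust$ itself, and $\pi_{\ba^-}=\pi_\ba$ for elementary moves (a combinatorial fact).
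For $\sigma_{i,k}$ with $a_{i,k}\neq a_{i,k+1}$, use $(y_{k+1}-y_k)\sigma_{i,k}=\chi_{i,k}-1$, with $y_{k+1}-y_k$ invertible on $W_\ba(N)$. Combine this with $\pi_{s_{i,k}\ba}=\pi_\ba s_{i,k}$ or $\pi_\ba=\pi_{s_{i,k}\ba}s_{i,k}$. Tracking how $\pi_\bb$ relates to $\pi_\ba$ across each generator is the real content of the check, and your proposal never does it. Once it is done, every needed identity is an honest identity in $\ftrust$, valid on any module.

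Be careful, too, about relying on the model identifications of \cref{prop:Theta,prop:Gamma} beyond elementary moves. Suppose the moving strand must pass an equal-label strand of the same weight. A plain admissible crossing acts on the model by a divided difference. But $\Gamma_{i,k}$ is a conjugate of $\Gamma_{i,1}$ by intertwiners, so it acts by a shift times a multiplier. Such passages have to be realized by $(Y_{p+1}-Y_p)\sigma_r(\nu,p)+1$, not by $\sigma_r(\nu,p)$.
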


\begin{proof}
Recall the intertwiners defined in \cref{eq:intertw}.
For \(i\in Q_0^{(+)}\) and \(1\leq k<v_i\), let
\(\chi_{i,k}\in\NH\) be the intertwiner corresponding to the adjacent
transposition \(s_{i,k}\) of the \(k\)-th and \((k+1)\)-st copies of
\(i\) in \(\aleph\). Thus \(\chi_{i,k}\) is the corresponding element
\(r_l\), where
\[
   l=v_1+\cdots+v_{i-1}+k.
\]
The intertwiner relations imply
\[
   \chi_{i,k}^2=1,
   \qquad
   \chi_{i,k}y_{j,l}
   =
   y_{s_{i,k}(j,l)}\chi_{i,k},
\]
and the elements \(\chi_{i,k}\) satisfy the usual braid and commutation
relations. Hence, for every \(\pi\in\Sigma\), the element
\(\chi_\pi\) obtained from a reduced expression of \(\pi\) is
well-defined and gives an isomorphism
\[
   \chi_\pi\colon
   W_\ba(N)\xrightarrow{\sim}W_{\pi\ba}(N)
\]
for every integral weight \(\ba\) and every \(N\in\fywtmod\).

We first consider the composition \(\Gamma\circ\Theta\). Let
\(M\in\intmodnil\). By \cref{eq:Theta,eq:Gamma-idem}, for every
\(\nu\in\Omega\), we have
\[
\begin{aligned}
   e(\nu)\Gamma(\Theta(M))
   &=
   W_{\ba(\nu)}(\Theta(M))                                      
   =
   e(\nu_{\ba(\nu)})M                                           
   =
   e(\nu)M.
\end{aligned}
\]
Consequently,
\[
   \Gamma(\Theta(M))
   =
   \bigoplus_{\nu\in\Omega}e(\nu)M
   =
   M
\]
as vector spaces.

Under this identification, the action of every generator of
\(\intklrw\) is unchanged. Indeed, substituting \cref{taco1} into
\cref{eq:Gamma-dot} recovers the original dot operator. Similarly,
substituting the equal-weight cases of \cref{taco2} into
\cref{eq:Gamma-short-cross} recovers the original admissible
crossing. Finally, the formulas
\cref{eq:Gamma-cross-right-odd,eq:Gamma-cross-right-even,
eq:Gamma-cross-left-odd,eq:Gamma-cross-left-even,
eq:Gamma-reflection-right,eq:Gamma-reflection-left}
were obtained by solving the corresponding cases of
\cref{taco3,taco4} for the elementary long crossing or long
reflection. Hence their substitution into \cref{taco3,taco4}
recovers the original elementary long diagrams. Since all long
crossings and long reflections are obtained from the elementary ones
and admissible crossings by concatenation, all generators of
\(\intklrw\) act as they did on \(M\). Thus the above identification
is an isomorphism of \(\intklrw\)-modules, natural in \(M\), and
\[
   \Gamma\circ\Theta
   \cong
   \operatorname{Id}_{\intmodnil}.
\]

We now consider the other composition. Let \(N\in\fywtmod\). For an
integral weight \(\ba\), set
\(
   \ba^\uparrow:=\ba(\nu_\ba).
\)
Thus \(\ba^\uparrow\) is the weakly increasing rearrangement of
\(\ba\), and
\[
   \nu_{\ba^\uparrow}=\nu_\ba.
\]
By \cref{eq:Theta,eq:Gamma-idem}, we have
\[
\begin{aligned}
   \Theta(\Gamma(N))
   =
   \bigoplus_{\ba\ \mathrm{integral}}
   e(\nu_\ba)\Gamma(N)\delta_\ba                              
   =
   \bigoplus_{\ba\ \mathrm{integral}}
   W_{\ba^\uparrow}(N)\delta_\ba.
\end{aligned}
\]

For every integral weight \(\ba\), let
\(\pi_\ba\in\Sigma\) be the 
unique permutation of minimal length such that
\[
   \pi_\ba\ba=\ba^\uparrow.
\]
Define
\[
\begin{aligned}
   \chi_N\colon N&\longrightarrow\Theta(\Gamma(N)),\\
   v&\longmapsto
   \chi_{\pi_\ba}(v)\delta_\ba,
   \qquad v\in W_\ba(N).
 \end{aligned}
\]
Since every \(\chi_{\pi_\ba}\) is an isomorphism, \(\chi_N\) is a
vector-space isomorphism. We need to show that it is
\(\ftrust\)-linear.

First consider \(y_{i,k}\). The occurrence of the strand associated
with the coordinate \((i,k)\) in \(\nu_\ba\) is precisely the
occurrence associated, under \(\eta_{\nu_\ba}\), with the coordinate
to which \((i,k)\) is sent by \(\pi_\ba\). Therefore,
using \cref{eq:Gamma-dot,taco1} and the intertwining relation for
\(\chi_{\pi_\ba}\), we obtain
\[
   y_{i,k}
   \bigl(
      \chi_{\pi_\ba}(v)\delta_\ba
   \bigr)
   =
   \chi_{\pi_\ba}(y_{i,k}v)\delta_\ba.
\]

Next consider \(\sigma_{i,k}\). If
\(a_{i,k}=a_{i,k+1}\), then $\pi_\ba$ preserves
the relative order of these two coordinates. Hence the corresponding two
strands are adjacent in the same subinterval. By
\cref{eq:Gamma-short-cross,taco2}, together with the braid relations
for the intertwiners, we obtain
\[
   \sigma_{i,k}
   \bigl(
      \chi_{\pi_\ba}(v)\delta_\ba
   \bigr)
   =
   \chi_{\pi_\ba}(\sigma_{i,k}v)\delta_\ba.
\]

Suppose now that \(a_{i,k}\neq a_{i,k+1}\), and set
\(
\bb=s_{i,k}\ba.
\)
The vector \(\sigma_{i,k}v\) has one component in \(W_\ba(N)\) and
one component in \(W_\bb(N)\). If \(a_{i,k}<a_{i,k+1}\), then
\[
\pi_\bb=\pi_\ba s_{i,k},
\]
whereas, if \(a_{i,k}>a_{i,k+1}\), then
\[
\pi_\ba=\pi_\bb s_{i,k}.
\]
These identities follow from the minimality of \(\pi_\ba\) and
\(\pi_\bb\).

We compare separately the two weight-space components of
\(
\sigma_{i,k}
\bigl(
\chi_{\pi_\ba}(v)\delta_\ba
\bigr)
\)
using the unequal-weight case of \cref{taco2}. For the component in
\(W_\ba(N)\), the desired equality follows from
\(
\chi_{\pi_\ba}f
=
\pi_\ba(f)\chi_{\pi_\ba}
\)
for every polynomial \(f\). For the component in \(W_\bb(N)\), it
follows from the preceding relation between \(\pi_\ba\) and
\(\pi_\bb\), together with the braid relations for the intertwiners.
Thus the two components agree with the corresponding components of
\(
\chi_N(\sigma_{i,k}v),
\)
and hence
\[
\sigma_{i,k}
\bigl(
\chi_{\pi_\ba}(v)\delta_\ba
\bigr)
=
\chi_N(\sigma_{i,k}v).
\]

It remains to check the generators
\(\Gamma_{i,k}\) and \(\Gamma_{\tau i,k}\). We give the argument for
\(\Gamma_{i,k}\); the verification for \(\Gamma_{\tau i,k}\) is similar. We first record their
compatibility with the intertwiners. For every \(\pi\in\Sigma\), if
\(
\pi(i,k)=(i,l),
\)
then it is easy to show that
\begin{gather}
\label{bala1}
\chi_\pi\Gamma_{i,k}
=
\Gamma_{i,l}\chi_\pi,
\qquad
\chi_\pi\Gamma_{\tau i,k}
=
\Gamma_{\tau i,l}\chi_\pi.
\end{gather}
We now check compatibility of $\chi_N$ with \(\Gamma_{i,k}\). Set
\(
\bb=\ba^-,
\)
and suppose that
\(
\pi_\ba(i,k)=(i,l).
\)
Then, by \cref{bala1}, we have
\(
\chi_{\pi_\ba}\Gamma_{i,k}
=
\Gamma_{i,l}\chi_{\pi_\ba}.
\)

Since every long crossing or long reflection occurring in
\cref{taco3} is obtained from an elementary long crossing or long
reflection by composition with admissible crossings, and compatibility
with admissible crossings has already been verified, it is enough to
check the elementary ones. We first treat the case
\(
a_{i,k}>\frac14.
\)
Set
\[
a=a_{i,k},
\qquad
m=
\#\{c\in\bR_{\tau i}\mid c=1-a\}.
\]
In this case, \cref{taco3} is given by an elementary long crossing
from the odd subinterval containing \(a\) to the odd subinterval
containing \(a-1\). Since we are in the elementary case, the moving
strand is the leftmost \(i\)-strand in the source subinterval and the
rightmost \(i\)-strand in the target subinterval. Hence lowering the
corresponding coordinate of \(\ba^\uparrow\) by \(1\) preserves the
ordering. Thus
\(
\pi_\ba\bb=\bb^\uparrow,
\)
and therefore
\(
\pi_\bb=\pi_\ba.
\)

On \(W_{\ba^\uparrow}(N)\), the dot on the moving strand acts by
\[
Y_{u_\ba(i,k)}
\bigl(
\nu_\ba,v_\ba(i,k)
\bigr)
=
y_{i,l}-a.
\]
Therefore the polynomial factor occurring in the first case of
\cref{taco3} becomes
\[
\prod_{\substack{c\in\bR_{\tau i}\\c\neq1-a}}
\left(
1-a-c-(y_{i,l}-a)
\right)
=
\prod_{\substack{c\in\bR_{\tau i}\\c\neq1-a}}
(1-c-y_{i,l}).
\]
Set
\(
F
=
\prod_{\substack{c\in\bR_{\tau i}\\c\neq1-a}}
(1-c-y_{i,l}).
\)
By \cref{eq:Gamma-cross-left-odd}, the elementary long crossing
\(C\) occurring here acts on \(W_{\ba^\uparrow}(N)\) by
\[
C(w)
=
(-1)^m
\Gamma_{i,l}
\bigl(
F^{-1}w
\bigr).
\]
Consequently, for \(v\in W_\ba(N)\),
\[
\begin{aligned}
(-1)^m
C\bigl(
F\chi_{\pi_\ba}(v)
\bigr)
=
\Gamma_{i,l}\chi_{\pi_\ba}(v)
=
\chi_{\pi_\ba}\Gamma_{i,k}(v)
=
\chi_{\pi_\bb}\Gamma_{i,k}(v).
\end{aligned}
\]
The left-hand side is precisely the action prescribed by the first
case of \cref{taco3} on
\(\chi_{\pi_\ba}(v)\delta_\ba\). Hence
\[
\Gamma_{i,k}
\bigl(
\chi_{\pi_\ba}(v)\delta_\ba
\bigr)
=
\chi_{\pi_\bb}
\bigl(
\Gamma_{i,k}v
\bigr)
\delta_\bb.
\]
The cases
\(
a_{i,k}=\frac14
\)
and
\(
a_{i,k}<0
\)
can be proved in the same way, using
\cref{eq:Gamma-reflection-right} and
\cref{eq:Gamma-cross-right-even}, respectively. 

Thus \(\chi_N\) commutes with all generators of \(\ftrust\), and hence
is an isomorphism of \(\ftrust\)-modules. The construction is natural
in \(N\), so
\[
   \Theta\circ\Gamma
   \cong
   \operatorname{Id}_{\fywtmod}.
\]

Therefore \(\Theta\) and \(\Gamma\) are mutually inverse
equivalences.
\end{proof}

\begin{remark}
Suppose that \((\Qui,\tau)\) is of diagonal type as in \cref{sec:diagnoal}. In this case,
the shifted iYangian construction reduces to the ordinary
shifted Yangian construction associated with \(\Qui_{+}\). On the
diagrammatic side, the Morita equivalence in
\cref{thm:diagonal} reduces the corresponding oKLRW algebra to the
ordinary KLRW algebra associated with \(\Qui_{+}\), and a similar
reduction can be applied to the relevant interval truncations.

We expect that, after applying this Morita reduction and unfolding the diagrammatic data paired by \(\tau\), the folded interval description used here can be translated into the metric/parity description used in \cite{KTWWY19}. Under these identifications, up to the change of normalization of \(\hbar\), \cref{thm:equivalence} essentially recovers the equivalence in \cite[Thm.~5.19]{KTWWY19}. 
\end{remark}

\begin{remark}
In the untwisted setting, the equivalence between integral weight modules over
truncated shifted Yangians and nilpotent modules over the corresponding
diagrammatic algebras restricts further to equivalences involving categories
\(\mathcal O^{\pm}\); see \cite[Def.~5.20 and Thm.~5.21]{KTWWY19}. On the
Yangian side, these categories are defined by imposing suitable one-sided
boundedness conditions on the weights. On the KLRW side, they correspond to
module categories over appropriate one-sided quotients of the diagrammatic
algebra.

At present, we do not have a satisfactory intrinsic definition of category
\(\mathcal O\) for shifted iYangians due to the lack of triangular decomposition. For this reason, \cref{thm:equivalence} is formulated only at the level of integral weight modules.
Nevertheless, the equivalence above suggests a possible diagrammatic approach
to such a definition. Namely, one expects that suitable one-sided quotients of
the interval oKLRW algebra \(\intklrw\), analogous to the corresponding
quotients in the untwisted setting, should give rise under the above
equivalence to natural candidates for categories
\(\mathcal O^{\pm}\) on the twisted-Yangian side.
\end{remark}

\bibliographystyle{alpha}
\bibliography{KLRTY}

@article{BFN18,
    AUTHOR = {Braverman, A. and Finkelberg, M. and Nakajima,
              H.},
     TITLE = {Towards a mathematical definition of {C}oulomb branches of
              3-dimensional {$\mathcal{N}=4$} gauge theories, {II}},
   JOURNAL = {Adv. Theor. Math. Phys.},
  FJOURNAL = {Advances in Theoretical and Mathematical Physics},
    VOLUME = {22},
      YEAR = {2018},
    NUMBER = {5},
     PAGES = {1071--1147},
      ISSN = {1095-0761,1095-0753},
   MRCLASS = {57R57 (14J33 14N35 16G20 17B67 81T13)},
  MRNUMBER = {3952347},
MRREVIEWER = {Dave\ Auckly},
       DOI = {10.4310/ATMP.2018.v22.n5.a1},
       URL = {https://doi.org/10.4310/ATMP.2018.v22.n5.a1},
}

@article {KTWWY19,
    AUTHOR = {Kamnitzer, J. and Tingley, P. and Webster, B. and
              Weekes, A. and Yacobi, O.},
     TITLE = {On category {$\mathcal{O}$} for affine {G}rassmannian slices and
              categorified tensor products},
   JOURNAL = {Proc. Lond. Math. Soc. (3)},
  FJOURNAL = {Proceedings of the London Mathematical Society. Third Series},
    VOLUME = {119},
      YEAR = {2019},
    NUMBER = {5},
     PAGES = {1179--1233},
      ISSN = {0024-6115,1460-244X},
   MRCLASS = {14M15 (17B37 20G42)},
  MRNUMBER = {3968721},
MRREVIEWER = {Huafeng\ Zhang},
       DOI = {10.1112/plms.12254},
       URL = {https://doi-org.proxy.bib.uottawa.ca/10.1112/plms.12254},
}

@article {Ara62,
    AUTHOR = {Araki, S.},
     TITLE = {On root systems and an infinitesimal classification of
              irreducible symmetric spaces},
   JOURNAL = {J. Math. Osaka City Univ.},
  FJOURNAL = {Journal of Mathematics. Osaka City University},
    VOLUME = {13},
      YEAR = {1962},
     PAGES = {1--34},
      ISSN = {0449-2773},
   MRCLASS = {17.30 (53.73)},
  MRNUMBER = {153782},
MRREVIEWER = {S.\ Helgason},
}

@article{En09,
    AUTHOR = {Enomoto, N.},
     TITLE = {A quiver construction of symmetric crystals},
   JOURNAL = {Int. Math. Res. Not. IMRN},
  FJOURNAL = {International Mathematics Research Notices. IMRN},
      YEAR = {2009},
    NUMBER = {12},
     PAGES = {2200--2247},
      ISSN = {1073-7928,1687-0247},
   MRCLASS = {17B37 (17B67)},
  MRNUMBER = {2511909},
MRREVIEWER = {Alistair\ Savage},
       DOI = {10.1093/imrn/rnp014},
       URL = {https://doi.org/10.1093/imrn/rnp014},
}

@article{EK07,
    AUTHOR = {Enomoto, N. and Kashiwara, M.},
     TITLE = {Symmetric crystals and affine {H}ecke algebras of type {B}},
   JOURNAL = {Proc. Japan Acad. Ser. A Math. Sci.},
  FJOURNAL = {Japan Academy. Proceedings. Series A. Mathematical Sciences},
    VOLUME = {82},
      YEAR = {2006},
    NUMBER = {8},
     PAGES = {131--136},
      ISSN = {0386-2194},
   MRCLASS = {20G05 (20C08)},
  MRNUMBER = {2279279},
MRREVIEWER = {Eric\ C.\ Rowell},
       URL = {http://projecteuclid.org/euclid.pja/1162820093},
}

@book {ENGO15,
    AUTHOR = {Etingof, P. and Gelaki, S. and Nikshych, D. and
              Ostrik, V.},
     TITLE = {Tensor categories},
    SERIES = {Mathematical Surveys and Monographs},
    VOLUME = {205},
 PUBLISHER = {American Mathematical Society, Providence, RI},
      YEAR = {2015},
     PAGES = {xvi+343},
      ISBN = {978-1-4704-2024-6},
   MRCLASS = {18D10 (16T05)},
  MRNUMBER = {3242743},
MRREVIEWER = {Julien\ Bichon},
       DOI = {10.1090/surv/205},
       URL = {https://doi.org/10.1090/surv/205},
}

@article {GKLO05,
    AUTHOR = {Gerasimov, A. and Kharchev, S. and Lebedev, D. and Oblezin,
              S.},
     TITLE = {On a class of representations of the {Y}angian and moduli
              space of monopoles},
   JOURNAL = {Comm. Math. Phys.},
  FJOURNAL = {Communications in Mathematical Physics},
    VOLUME = {260},
      YEAR = {2005},
    NUMBER = {3},
     PAGES = {511--525},
      ISSN = {0010-3616,1432-0916},
   MRCLASS = {53C07 (17B37 53D17 53D30)},
  MRNUMBER = {2182434},
MRREVIEWER = {Olivier\ G.\ Schiffmann},
       DOI = {10.1007/s00220-005-1417-3},
       URL = {https://doi.org/10.1007/s00220-005-1417-3},
}

@article {KWWY14,
    AUTHOR = {Kamnitzer, J. and Webster, B. and Weekes, A. and Yacobi,
              O.},
     TITLE = {Yangians and quantizations of slices in the affine
              {G}rassmannian},
   JOURNAL = {Algebra Number Theory},
  FJOURNAL = {Algebra \& Number Theory},
    VOLUME = {8},
      YEAR = {2014},
    NUMBER = {4},
     PAGES = {857--893},
      ISSN = {1937-0652,1944-7833},
   MRCLASS = {17B37 (14D24 14M15 20G15 53D55)},
  MRNUMBER = {3248988},
MRREVIEWER = {Christian\ Ohn},
       DOI = {10.2140/ant.2014.8.857},
       URL = {https://doi.org/10.2140/ant.2014.8.857},
}

@article{LWW,
    author = {Lu, K. and Wang, W. and Weekes, A.},
    title = {Shifted twisted {Y}angians and affine {G}rassmannian islices},
    year={2025},
JOURNAL = {preprint},
note    = {\href{https://arxiv.org/abs/2510.10652}{arXiv:2510.10652}},
URL = {https://arxiv.org/abs/2510.10652},
    EPRINT = {2510.10652},
}

@article{LWW2,
    author = {Lu, K. and Wang, W. and Weekes, A.},
    title = {Shifted twisted {Y}angians of quasi-split {ADE} types},
    year={2025},
JOURNAL = {preprint},
note    = {\href{https://arxiv.org/abs/2512.19998}{arXiv:2512.19998}},
URL = {https://arxiv.org/pdf/2512.19998},
    EPRINT = {2512.19998},
}

@article{Web16,
    author = {Webster, B.},
    title = {Koszul duality between {H}iggs and {C}oulomb categories $\mathcal{O}$},
    year={2016},
JOURNAL = {preprint},
note    = {\href{https://arxiv.org/pdf/1611.06541}{arXiv:1611.06541}},
URL = {https://arxiv.org/abs/1611.06541},
    EPRINT = {1611.06541},
}

@article {KWWY24,
    AUTHOR = {Kamnitzer, J. and Webster, B. and Weekes, A. and Yacobi,
              O.},
     TITLE = {Lie algebra actions on module categories for truncated shifted
              {Y}angians},
   JOURNAL = {Forum Math. Sigma},
  FJOURNAL = {Forum of Mathematics. Sigma},
    VOLUME = {12},
      YEAR = {2024},
     PAGES = {Paper No. e18, 69},
      ISSN = {2050-5094},
   MRCLASS = {20G05 (14M15 17B67)},
  MRNUMBER = {4699878},
MRREVIEWER = {Linliang\ Song},
       DOI = {10.1017/fms.2024.3},
       URL = {https://doi-org.proxy.bib.uottawa.ca/10.1017/fms.2024.3},
}

@incollection {BD17,
    AUTHOR = {Brundan, J. and Davidson, N.},
     TITLE = {Categorical actions and crystals},
 BOOKTITLE = {Categorification and higher representation theory},
    SERIES = {Contemp. Math.},
    VOLUME = {683},
     PAGES = {105--147},
 PUBLISHER = {Amer. Math. Soc., Providence, RI},
      YEAR = {2017},
      ISBN = {978-1-4704-2460-2},
   MRCLASS = {17B67 (16G99 17B10 18D10)},
  MRNUMBER = {3611712},
MRREVIEWER = {Sorin\ D\u asc\u alescu},
       DOI = {10.1090/conm/683},
       URL = {https://doi-org.proxy.bib.uottawa.ca/10.1090/conm/683},
}

@article{LWW26,
    author = {Lu, K. and Wang, W. and Weekes, A.},
    title = {Shifted affine iquantum groups of quasi-split {ADE} types},
    year={2026},
JOURNAL = {preprint},
note    = {\href{https://arxiv.org/abs/2603.28446}{arXiv:2603.28446}},
URL = {https://arxiv.org/pdf/2603.28446},
    EPRINT = {2603.28446},
}

@article{LP26,
    author = {Li, J. and Przezdziecki, T.},
    title = {{GKLO} representations for shifted quantum affine symmetric pairs},
    year={2026},
JOURNAL = {preprint},
note    = {\href{https://arxiv.org/abs/2603.07250}{arXiv:2603.07250}},
URL = {https://arxiv.org/pdf/2603.07250},
    EPRINT = {2603.07250},
}

@article{Rou,
    author = {Rouquier, R.},
    title = {2-{K}ac-{M}oody algebras},
    year={2008},
JOURNAL = {preprint},
note    = {\href{https://arxiv.org/abs/0812.5023}{arXiv:0812.5023}},
URL = {https://arxiv.org/abs/0812.5023},
    EPRINT = {0812.5023},
}

@article {SSX25,
    AUTHOR = {Shen, Y. and Su, C. and Xiong, R.},
     TITLE = {Quivers with involutions and shifted twisted {Y}angians via Coulomb Branches},
      Journal = {to appear on Comm. Math. Phys.},
ARCHIVEPREFIX = {arXiv},
note    = {\href{https://arxiv.org/abs/2510.12118}{arXiv:2510.12118}},
URL = {https://arxiv.org/abs/2510.12118},
    EPRINT = {2510.12118},
}

@article {KL09,
    AUTHOR = {Khovanov, M. and Lauda, A.},
     TITLE = {A diagrammatic approach to categorification of quantum groups.
              {I}},
   JOURNAL = {Represent. Theory},
  FJOURNAL = {Representation Theory. An Electronic Journal of the American
              Mathematical Society},
    VOLUME = {13},
      YEAR = {2009},
     PAGES = {309--347},
      ISSN = {1088-4165},
   MRCLASS = {17B37},
  MRNUMBER = {2525917},
MRREVIEWER = {Fan\ Xu},
       DOI = {10.1090/S1088-4165-09-00346-X},
       URL = {https://doi-org.proxy.bib.uottawa.ca/10.1090/S1088-4165-09-00346-X},
}

@article {Web15,
    AUTHOR = {Webster, B.},
     TITLE = {Canonical bases and higher representation theory},
   JOURNAL = {Compos. Math.},
  FJOURNAL = {Compositio Mathematica},
    VOLUME = {151},
      YEAR = {2015},
    NUMBER = {1},
     PAGES = {121--166},
      ISSN = {0010-437X,1570-5846},
   MRCLASS = {17B37 (17B10 18D05)},
  MRNUMBER = {3305310},
MRREVIEWER = {Kevin\ D.\ Coulembier},
       DOI = {10.1112/S0010437X1400760X},
       URL = {https://doi-org.proxy.bib.uottawa.ca/10.1112/S0010437X1400760X},
}

@article {Web17a,
    AUTHOR = {Webster, B.},
     TITLE = {Knot invariants and higher representation theory},
   JOURNAL = {Mem. Amer. Math. Soc.},
  FJOURNAL = {Memoirs of the American Mathematical Society},
    VOLUME = {250},
      YEAR = {2017},
    NUMBER = {1191},
     PAGES = {v+141},
      ISSN = {0065-9266,1947-6221},
      ISBN = {978-1-4704-2650-7; 978-1-4704-4206-4},
   MRCLASS = {57M27 (17B10 18D05 57M25)},
  MRNUMBER = {3709726},
MRREVIEWER = {Stefan\ K.\ Friedl},
       DOI = {10.1090/memo/1191},
       URL = {https://doi-org.proxy.bib.uottawa.ca/10.1090/memo/1191},
}

@article {Prz23,
    AUTHOR = {Prze\'zdziecki, T.},
     TITLE = {Representations of orientifold {K}hovanov-{L}auda-{R}ouquier
              algebras and the {E}nomoto-{K}ashiwara algebra},
   JOURNAL = {Pacific J. Math.},
  FJOURNAL = {Pacific Journal of Mathematics},
    VOLUME = {322},
      YEAR = {2023},
    NUMBER = {2},
     PAGES = {407--441},
      ISSN = {0030-8730,1945-5844},
   MRCLASS = {81R50 (17B37 18N25 20C08)},
  MRNUMBER = {4592237},
MRREVIEWER = {Youjun\ Tan},
       DOI = {10.2140/pjm.2023.322.407},
       URL = {https://doi-org.proxy.bib.uottawa.ca/10.2140/pjm.2023.322.407},
}

@article {LWZ25,
    AUTHOR = {Lu, K. and Wang, W. and Zhang, W.},
     TITLE = {Affine {$\imath$}quantum groups and twisted {Y}angians in
              {D}rinfeld presentations},
   JOURNAL = {Comm. Math. Phys.},
  FJOURNAL = {Communications in Mathematical Physics},
    VOLUME = {406},
      YEAR = {2025},
    NUMBER = {5},
     PAGES = {Paper No. 98, 36},
      ISSN = {0010-3616,1432-0916},
   MRCLASS = {17B37 (81R50)},
  MRNUMBER = {4887617},
       DOI = {10.1007/s00220-025-05263-z},
       URL = {https://doi.org/10.1007/s00220-025-05263-z},
}

@article{LZ24,
  title={A {D}rinfeld type presentation of twisted {Y}angians of quasi-split type},
  author={Lu, K. and Zhang, W.},
  journal={to appear on Commun. Contemp. Math.},
note    = {\href{https://arxiv.org/pdf/2408.06981}{arXiv:2408.06981}},
  year={2026}
}

@article{Wang25,
  title={Quivers with Involutions and Shifted Twisted {Y}angians via {C}oulomb Branches II},
  author={Wang, Z.},
  journal={preprint},
note    = {\href{https://arxiv.org/abs/2601.00039}{arXiv:2601.00039}},
  year={2025}
}

@article{LPTTW25,
  title={Shifted twisted Yangians and finite W-algebras of classical type},
  author={Lu, Kang and Peng, Yung-Ning and Tappeiner, Lukas and Topley, Lewis and Wang, Weiqiang},
  year={2025},
JOURNAL = {preprint},
note    = {\href{https://arxiv.org/pdf/2505.03316}{arXiv:2505.03316}},
URL = {https://arxiv.org/pdf/2505.03316},
    EPRINT = {2505.03316},
}

@article {SSS25,
    AUTHOR = {Salmasian, H. and Savage, A. and Shen, Y.},
     TITLE = {The disoriented skein and iquantum {B}rauer categories},
   JOURNAL = {Forum Math. Sigma},
  FJOURNAL = {Forum of Mathematics. Sigma},
    VOLUME = {14},
      YEAR = {2026},
     PAGES = {Paper No. e11, 41},
      ISSN = {2050-5094},
   MRCLASS = {18M30 (17B37)},
  MRNUMBER = {5019638},
       DOI = {10.1017/fms.2025.10148},
       URL = {https://doi-org.proxy.bib.uottawa.ca/10.1017/fms.2025.10148},
}

@article{VV11,
    AUTHOR = {Varagnolo, M. and Vasserot, E.},
     TITLE = {Canonical bases and affine {H}ecke algebras of type {B}},
   JOURNAL = {Invent. Math.},
  FJOURNAL = {Inventiones Mathematicae},
    VOLUME = {185},
      YEAR = {2011},
    NUMBER = {3},
     PAGES = {593--693},
      ISSN = {0020-9910,1432-1297},
   MRCLASS = {20C08 (16G20 17B10 17B37)},
  MRNUMBER = {2827096},
MRREVIEWER = {Jonathan\ Brundan},
       DOI = {10.1007/s00222-011-0314-y},
       URL = {https://doi.org/10.1007/s00222-011-0314-y},
}

@article{We19,
  title={Generators for {C}oulomb branches of quiver gauge theories},
  author={Weekes, A.},
  journal={preprint},
    note={\href{https://arxiv.org/abs/1903.07734}{arXiv:1903.0773}},
  year={2019}
}

\end{document}